\documentclass[11pt]{amsart}
\usepackage{graphicx}
\usepackage{amsmath}
\usepackage{amsfonts}
\usepackage{amssymb}
\usepackage{MnSymbol}
\usepackage{rotating}
\usepackage{amssymb}
\usepackage{gastex}
\usepackage[usenames]{color}

\vfuzz2pt 
\hfuzz2pt 
\newtheorem{thm}{Theorem}[section]
\newtheorem{cor}[thm]{Corollary}
\newtheorem{lem}[thm]{Lemma}
\newtheorem{prop}[thm]{Proposition}
\newtheorem{defn}[thm]{Definition}


\newcommand{\abs}[1]{\left\vert#1\right\vert}

\newcommand{\B}{pseudo-nilpotent}
\def\abs#1{\ensuremath{\lvert #1\rvert}}

\begin{document}
\title[]{A description of a class of finite semigroups that are near to being Malcev nilpotent}
\author{E. Jespers and M.H. Shahzamanian}
\address{Department of Mathematics,
 Vrije Universiteit Brussel,  Pleinlaan 2, 1050
Brussel, Belgium} \email{efjesper@vub.ac.be,
m.h.shahzamanian@vub.ac.be}
\thanks{ 2010
Mathematics Subject Classification. Primary 20M07, 20M99, 05C25,
Secondary: 20F18.
 Keywords and
phrases: semigroup, nilpotent, graph.
\\Research partially supported by
Onderzoeksraad of Vrije Universiteit Brussel, Fonds voor
Wetenschappelijk Onderzoek (Belgium). }

\begin{abstract}
In this paper we continue the investigations on the algebraic
structure of a finite semigroup $S$ that is determined by its
associated upper non-nilpotent graph $\mathcal{N}_{S}$. The
vertices of this graph are the elements of $S$ and two  vertices
are adjacent if they generate a semigroup that is not nilpotent
(in the sense of Malcev). We introduce a class of semigroups in
which the Mal'cev nilpotent property lifts through ideal chains.
We call this the class of \B\ semigroups. The definition is such
that the global information that a semigroup is not nilpotent
induces local information, i.e. some two-generated subsemigroups
are not nilpotent. It turns out that a finite monoid (in
particular, a finite group) is \B\ if and only if it is nilpotent.
Our main result is a description of \B\ finite semigroups $S$ in
terms of their associated graph ${\mathcal N}_{S}$.  In
particular, $S$ has a largest nilpotent ideal, say $K$, and $S/K$
is a $0$-disjoint union of its connected components (adjoined with
a  zero) with at least two elements.
\end{abstract}

\maketitle

\section{Introduction}\label{pre}


For a semigroup $S$ with elements $x,y,z_{1},z_{2},\ldots $ one
recursively defines two sequences
$$\lambda_n=\lambda_{n}(x,y,z_{1},\ldots, z_{n})\quad{\rm and}
\quad \rho_n=\rho_{n}(x,y,z_{1},\ldots, z_{n})$$ by
$$\lambda_{0}=x, \quad \rho_{0}=y$$ and
$$\lambda_{n+1}=\lambda_{n} z_{n+1} \rho_{n}, \quad \rho_{n+1}
=\rho_{n} z_{n+1} \lambda_{n}.$$ A semigroup is said to be
\textit{nilpotent} (in the sense of Mal'cev~\cite{malcev}, denote
$(MN)$ in~\cite{Riley}) if there exists a positive integer $n$ such
that
$$\lambda_{n}(a,b,c_{1},\ldots, c_{n}) = \rho_{n}(a,b,c_{1},
\ldots, c_{n})$$ for all $a, b$ in $S$ and $ c_{1}, \ldots, c_{n}$
in $S^{1}$. The smallest such $n$ is called the nilpotency class
of $S$. It is well known that a group $G$ is nilpotent of class
$n$ if and only if it is nilpotent of class $n$ in the classical
sense. Nilpotent semigroups and their semigroup algebras have been
investigated in~\cite{Eric}. For example, it is proved that a
completely $0$-simple semigroup $S$ is nilpotent if and only if
$S$ is an inverse semigroup with  nilpotent maximal subgroups. If
$S$ is a semigroup with a zero $\theta$, then obviously an ideal
$I$  with $I^n = \{\theta \}$ is a nilpotent semigroup as well.
Several consequences of the (Mal'cev) nilpotence  have appeared in
the literature. For example, in~\cite{neu-tay} a semigroup $S$ is
said to be \textit{Neumann-Taylor} $(NT)$ if there exists a
positive integer $n\geq 2$ such that
$$\lambda_n(a, b, 1, c_2, \ldots, c_n)=\rho_n(a, b, 1, c_2, \ldots, c_n)$$
for all $a,b,c_2,\ldots,c_n$ in $S^1$. A semigroup $S$ is said to
be \textit{positively Engel} $(PE)$ if for some positive integer
$n\geq 2$,
$$\lambda_{n}(a,b,1,1,c,c^{2},\ldots, c^{n-2})
=\rho_{n} (a,b,1,1,c,c^{2},\ldots, c^{n-2})$$ for all $a, b$ in
$S$ and $c\in S^{1}$. A semigroup $S$ is said to be \textit{weakly
Mal'cev nilpotent} $(WMN)$ if for some positive integer $n$,
$$\lambda_{n}(a,b,c_1,c_2,c_1,c_2,\ldots )
=\rho_{n} (a,b,c_1,c_2,c_1,c_2,\ldots )$$ for all $a, b$ in $S$
and $c_1, c_2\in S^{1}$. These classes of semigroups have been
studied in~\cite{Riley}.

In~\cite{Jes-shah} we initiated the investigations on the upper
non-nilpotent graph ${\mathcal N}_{S}$ of a finite semigroup $S$.
Recall that the vertices of ${\mathcal N}_{S}$ are the elements of
$S$ and there is an edge between $x$ and $y$ if the semigroup
generated by $x$ and $y$, denoted by $\langle x, y \rangle$, is not
nilpotent. Note that ${\mathcal N}_{S}$ is empty if $S$ is a
nilpotent semigroup. We state some of the results obtained. If a
finite semigroup $S$ has empty upper non-nilpotent graph then $S$
is positively Engel. On the other hand, a semigroup has a complete
upper non-nilpotent graph if and only if it is a completely simple
semigroup that is a band. The main result says that if all
connected ${\mathcal N}_{S}$-components of a semigroup $S$ are
complete (with at least two elements) then $S$ is a band that is a
semilattice of its connected components and, moreover, $S$ is an
iterated total ideal extension of its connected components.
Further, it is shown that some graphs, such as a cycle $C_{n}$ on
$n$ vertices (with $n\geq 5$), are not the upper
non-nilpotent graph of a semigroup.  Also, there is  precisely one
graph on $4$ vertices that is not the upper non-nilpotent
graph of a semigroup with $4$ elements.

In this paper we continue these investigations. We introduce a
class of semigroups in which the Mal'cev nilpotent property lifts
through ideal chains. We call this the class of \B\ semigroups. It
turns out that a finite monoid (in particular, a finite group) is
\B\ if and only if it is  nilpotent. Our main result is a
description of \B\ finite semigroup $S$ in terms of their
associated graph ${\mathcal N}_{S}$.  In particular, $S$ has a
largest nilpotent ideal, say $K$, and $S/K$ is a $0$-disjoint
union of its connected components (adjoined with a zero) with at
least two elements.

For standard notations and terminology we refer to~\cite{cliford}.

\section{\B\  semigroups}

Suppose that $I$ is an ideal of a semigroup $S$. If $S$ is
nilpotent then clearly so are the semigroups $I$ and $S / I$.
However in general the converse fails (see Example~2.3 in~\cite{Eric}). We will introduce a class of semigroups, called the
\B\ semigroups, for which the converse does hold. The definition
is motivated by the following lemma proved in~\cite{Jes-shah}.

\begin{lem} \label{finite-nilpotent}
A finite semigroup S is not nilpotent if and only if there exists
a positive integer $m$, distinct elements $x, y\in S$ and $ w_{1},
w_{2}, \ldots, w_{m}\in S^{1}$, such that $x = \lambda_{m}(x, y,
w_{1}, w_{2}, \ldots, w_{m})$, $y = \rho_{m}(x, y, w_{1}, w_{2},
\ldots, w_{m})$  (note that for the converse one does not need
that $S$ is finite).
\end{lem}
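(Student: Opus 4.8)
The plan is to prove both implications, with the converse (sufficiency) being the easy direction that works for an arbitrary semigroup, and the forward direction (necessity) being where finiteness is essential. For the converse, suppose such an $m$, distinct elements $x,y$, and $w_1,\ldots,w_m\in S^1$ exist. The key observation is a composition law for the two sequences: since the passage from $(\lambda_n,\rho_n)$ to $(\lambda_{n+1},\rho_{n+1})$ depends only on the current pair and on the next parameter $z_{n+1}$, feeding the list $w_1,\ldots,w_m$ repeated $k$ times into the pair $(x,y)$ reproduces, after each block of $m$ steps, the pair $(\lambda_m(x,y,w_1,\ldots,w_m),\rho_m(x,y,w_1,\ldots,w_m))=(x,y)$. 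Formally I would prove by induction on $k$ that $\lambda_{km}=x$ and $\rho_{km}=y$ when the parameter list consists of $k$ concatenated copies of $w_1,\ldots,w_m$. Since $x\neq y$, the defining identity of nilpotency then fails for every multiple of $m$, hence for arbitrarily large $n$, so $S$ is not nilpotent. No finiteness is used here.

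For the forward direction, assume $S$ is finite and not nilpotent. First I would record a monotonicity fact: if $\lambda_{n}(a,b,c_1,\ldots,c_n)\neq\rho_{n}(a,b,c_1,\ldots,c_n)$ then already $\lambda_{n-1}(a,b,c_1,\ldots,c_{n-1})\neq\rho_{n-1}(a,b,c_1,\ldots,c_{n-1})$ with the same parameters. Indeed, from $\lambda_n=\lambda_{n-1}z_n\rho_{n-1}$ and $\rho_n=\rho_{n-1}z_n\lambda_{n-1}$ one sees that equality at level $n-1$ forces equality at level $n$. Call a pair $(u,v)\in S\times S$ \emph{active} if for every $N$ there exist $c_1,\ldots,c_N\in S^1$ with $\lambda_N(u,v,c_1,\ldots,c_N)\neq\rho_N(u,v,c_1,\ldots,c_N)$; by the monotonicity fact a pair that admits witnesses of arbitrarily large length is automatically active. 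Since $S$ is not nilpotent, for each $N$ some pair admits a length-$N$ witness, and as $S\times S$ is finite some single pair admits witnesses of arbitrarily large length; hence an active pair exists. Every active pair has distinct entries, since $u=v$ propagates $\lambda_n=\rho_n$ through the recursion.

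The crux is to convert an active pair into a genuine fixed point. I would show that every active pair $(u,v)$ has an active successor: each length-$N$ witness from $(u,v)$ begins with some $c\in S^1$ and thereafter becomes a length-$(N-1)$ witness from $(ucv,\,vcu)$; since there are only finitely many possible first letters $c$ but arbitrarily long witnesses, at least one successor $(ucv,\,vcu)$ inherits witnesses of arbitrarily large length and is therefore active. Building the successor relation on the finite set of active pairs and iterating, I obtain an infinite walk, which must close into a cycle $(u_0,v_0)\to\cdots\to(u_m,v_m)=(u_0,v_0)$ with labels $w_1,\ldots,w_m$. Setting $x=u_0$ and $y=v_0$ and unwinding the definitions gives $\lambda_i(x,y,w_1,\ldots,w_i)=u_i$ and $\rho_i(x,y,w_1,\ldots,w_i)=v_i$ for all $i$, so at $i=m$ one recovers $x=\lambda_m(x,y,w_1,\ldots,w_m)$ and $y=\rho_m(x,y,w_1,\ldots,w_m)$ with $x\neq y$, as required.

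I expect the main obstacle to be exactly this last extraction. Non-nilpotency only supplies ever-longer inequalities along possibly wandering trajectories, and the real work is to use finiteness---through the monotonicity fact together with the pigeonhole-and-cycle argument on the graph of active pairs---to force a return to the \emph{same} pair rather than merely an unbounded sequence of distinct evaluations.
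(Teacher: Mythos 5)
Your proof is correct, but note that the paper itself does not prove this lemma --- it is quoted from \cite{Jes-shah} --- so the comparison is with the standard argument given there. Your converse direction (concatenate $k$ copies of $w_{1},\ldots ,w_{m}$ to get $\lambda_{km}=x\neq y=\rho_{km}$, then use the truncation/monotonicity fact to kill nilpotency of every class) is exactly the usual one and, as you say, needs no finiteness. For the forward direction the standard proof is shorter: since $S$ is not nilpotent, and since a witness of length $N$ truncates to a witness of every length $n\leq N$, one takes a \emph{single} witness $a,b,c_{1},\ldots ,c_{N}$ with $N>\abs{S}^{2}$, observes that all pairs $(\lambda_{i},\rho_{i})$ for $0\leq i\leq N$ have distinct entries and lie in $S\times S$, and pigeonholes along this one trajectory to find $t<r$ with $(\lambda_{t},\rho_{t})=(\lambda_{r},\rho_{r})$; setting $x=\lambda_{t}$, $y=\rho_{t}$, $m=r-t$, $w_{i}=c_{t+i}$ finishes. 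Your ``active pair'' machinery reproves this with an extra layer of diagonalization: you pigeonhole once over $S\times S$ to find an active pair, again over $S^{1}$ at each successor step, and a third time to close a cycle. That is all valid (the successor of an active pair is active precisely because only finitely many first letters $c$ are available while witnesses of unbounded length exist), but it buys nothing over the single-trajectory argument, and it forces one small correction: an infinite walk on a finite set of active pairs need not return to the starting pair $(u_{0},v_{0})$; it merely repeats some pair $(u_{j},v_{j})=(u_{k},v_{k})$ with $j<k$, and you should take $x=u_{j}$, $y=v_{j}$ and the labels $w_{j+1},\ldots ,w_{k}$ of that portion of the walk. With that relabelling your argument is complete.
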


For convenience we call the empty set an ideal
of $S$ and thus, by definition, $S/\emptyset =S$.

\begin{defn}
A semigroup $S$ is said to be \B\ if $$\lambda_{t}(x,y,w_{1},\ldots, w_{t})
\neq
\rho_{t}(x,y,w_{1},\ldots, w_{t}),$$ $$(\lambda_{t}(x,y,w_{1},\ldots,
w_{t}),
\rho_{t}(x,y,w_{1},\ldots, w_{t})) =$$ $$ (\lambda_{m}(x,y,w_{1},\ldots,
w_{m}), \rho_{m}(x,y,w_{1},\ldots,
w_{m})),$$ for $x,y\in S$, $w_{1}, \ldots , w_{m}\in S^{1}$, $I$ an ideal (possibly empty) of  $\langle x,y, w_1, \ldots, w_m\rangle$,
$\lambda_m, \rho_m\not\in  I$, and non-negative integers $t < m$, implies that there are
edges in ${\mathcal N}_{\langle x,y, w_1, \ldots, w_m\rangle/I}$ between
$\lambda_i$ and $\rho_i$ for
every $0 \leq i \leq m$.
\end{defn}

So in some sense the global condition that $S$ is not nilpotent
determines local information, i.e. some two-generated
subsemigroups are not nilpotent.

Examples of \B\ semigroups are nilpotent semigroups and semigroups
with complete upper non-nilpotent graphs. Obviously, subsemigroups
and Rees factor semigroups of \B\ semigroups are also \B. Hence,
the \B\ condition yields restrictions on the completely $0$-simple
components of a semigroup. It easily can be verified that the
completely $0$-simple semigroup $\mathcal{M}^0(\{e\}, 2, 2; \left(
\begin{matrix} 1 & 1\\ \theta & 1 \end{matrix} \right))$ is not
\B.

Note that if in the definition of \B\ semigroup $S$ one would assume that  $I$ is an ideal of $S$ and require
that there are edges ${\mathcal N}_{S/I}$ between $\lambda_i$ and $\rho_i$ for
every $0 \leq i \leq m$ then subsemigroups or Rees factors of \B\ semigroups need not to inherit this condition.
Hence the requirement that $I$ is an ideal of $\langle x,y, w_1, \ldots, w_m\rangle$

\begin{lem}\label{empty}
Let $S$ be a \B\ finite semigroup. If $I$ is an ideal of $S$ such
that $I$ and  $S / I$ are nilpotent, then $S$ is nilpotent.
\end{lem}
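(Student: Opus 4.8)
The plan is to prove the contrapositive: assume $S$ is not nilpotent and derive a contradiction with the hypothesis that both $I$ and $S/I$ are nilpotent. Since $S$ is a finite non-nilpotent semigroup, Lemma~\ref{finite-nilpotent} supplies a positive integer $m$, distinct elements $x,y\in S$, and $w_1,\ldots,w_m\in S^1$ such that $x=\lambda_m(x,y,w_1,\ldots,w_m)$ and $y=\rho_m(x,y,w_1,\ldots,w_m)$. The idea is that this single global witness to non-nilpotency must, through the \B\ condition, force a non-nilpotent two-generated subsemigroup to live either inside $I$ or inside a Rees factor of $S/I$, contradicting one of the two nilpotency assumptions.

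First I would set $T=\langle x,y,w_1,\ldots,w_m\rangle$ and consider $I\cap T$, which is an ideal of $T$. The natural dichotomy is whether the fixed-point pair $(\lambda_m,\rho_m)=(x,y)$ lies in $I$ or not. If $x,y\in I$, then because $\lambda_i,\rho_i$ are built by multiplication from $x,y$ and elements of $S^1$, and $I$ is an ideal, I would argue that the whole computation takes place inside the nilpotent subsemigroup $I$ (or a subsemigroup thereof); but a nilpotent semigroup cannot contain a nontrivial fixed point of the $(\lambda_m,\rho_m)$-recursion with $\lambda_m\neq\rho_m$, contradicting nilpotency of $I$. If instead $\lambda_m=x\notin I$ or $\rho_m=y\notin I$, then I take $J=I\cap T$ as the ideal in the definition of \B\ (allowing $J=\emptyset$ when $I\cap T$ is empty). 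Since $\lambda_m,\rho_m\notin J$, the defining implication of a \B\ semigroup applies with this choice of $T$, $J$, and the witnesses above, yielding edges in $\mathcal{N}_{T/J}$ between $\lambda_i$ and $\rho_i$ for every $0\le i\le m$; in particular there is an edge between $\lambda_0=x$ and $\rho_0=y$, so $\langle \overline{x},\overline{y}\rangle$ is not nilpotent in $T/J$.

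The concluding step is to locate the resulting contradiction inside $S/I$. Because $J=I\cap T$, the quotient $T/J$ embeds into $S/I$ as a subsemigroup (identifying the images), so a non-nilpotent two-generated subsemigroup of $T/J$ gives a non-nilpotent subsemigroup of $S/I$. Since subsemigroups of nilpotent semigroups are nilpotent, this contradicts the assumed nilpotency of $S/I$. Thus neither branch of the dichotomy is tenable, so $S$ must be nilpotent.

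The main obstacle I anticipate is the bookkeeping in the first branch, namely making rigorous the claim that when the fixed point lies in $I$ the entire recursion is trapped in $I$ and hence furnishes a witness contradicting nilpotency of $I$ via Lemma~\ref{finite-nilpotent}; one must be careful that the $w_i$ range over $S^1$ rather than $I^1$, so the relevant statement is about the elements $\lambda_i,\rho_i$ (which are actual products containing the factors $x,y\in I$) lying in $I$, and then apply the converse direction of Lemma~\ref{finite-nilpotent} within $I$. A secondary subtlety is verifying that the embedding of $T/(I\cap T)$ into $S/I$ is as semigroups and that non-nilpotency transfers correctly through this identification, which I expect to be routine but must be stated cleanly.
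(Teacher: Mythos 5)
Your overall skeleton matches the paper's: extract the non-nilpotency witness from Lemma~\ref{finite-nilpotent} and split on whether the fixed pair $(x,y)$ lies in $I$. Your second branch is essentially sound, modulo one small point: from ``$x\notin I$ or $y\notin I$'' you silently conclude that \emph{both} lie outside $I$, which you need in order to assert $\lambda_m,\rho_m\notin J$. This does hold, but only after observing that the fixed-point equalities give $x=ayb$ and $y=a'xb'$ for some $a,b,a',b'\in S^1$, so that $x\in I$ if and only if $y\in I$; the paper makes exactly this observation and you should too.

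The genuine gap is in your first branch. When $x,y\in I$ you argue that the computation is ``trapped in $I$'' and that a nilpotent semigroup cannot contain a nontrivial fixed point of the recursion. But the converse direction of Lemma~\ref{finite-nilpotent}, applied to the semigroup $I$, requires the multipliers $w_1,\ldots,w_m$ to lie in $I^1$, whereas yours lie only in $S^1$. Knowing merely that all the $\lambda_i,\rho_i$ land in $I$ does not reproduce the recursion inside $I$: the product $\lambda_{n+1}=\lambda_n w_{n+1}\rho_n$ cannot in general be rewritten as $\lambda_n w'\rho_n$ with $w'\in I^1$, and the failure of precisely this kind of transfer is the reason the \B\ class is introduced in the first place. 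The fix --- and what the paper does --- is to invoke the \B\ hypothesis once at the outset, with the empty ideal and $t=0$ (noting $(\lambda_0,\rho_0)=(x,y)=(\lambda_m,\rho_m)$ and $x\neq y$), to conclude that the two-generated subsemigroup $\langle x,y\rangle$ is itself not nilpotent; then $x,y\in I$ places this non-nilpotent subsemigroup inside $I$, contradicting nilpotency of $I$ because subsemigroups of nilpotent semigroups are nilpotent. Your ``main obstacle'' paragraph correctly identifies the difficulty, but the resolution you sketch there does not close it.
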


\begin{proof}
Suppose that $S$ is not nilpotent. Then by
Lemma~\ref{finite-nilpotent}, there exists a positive integer $m$,
distinct elements $x, y\in S$ and elements $ w_{1}, w_{2}, \ldots,
w_{m}\in S^{1}$, such that $x = \lambda_{m}(x, y, w_{1},
w_{2},\ldots, w_{m})$ and $y = \rho_{m}(x, y, w_{1}, w_{2},
\ldots, w_{m})$. As $S$ is \B, this implies that there is an edge
in ${\mathcal N}_S$ between $x$ and $y$, i.e. $\langle x, y
\rangle$ is not nilpotent. If both $x$ and $y$ are not in $I$,
then none of the elements $ w_{1}, w_{2}, \ldots, w_{m},
\lambda_{i}(x,y,w_{1},\ldots, w_{i}), \rho_{i}(x,y,w_{1},\ldots,
w_{i})$  (for $1 \leq i \leq m$) belong to $I$, as $I$ is an ideal
of $S$. However, this is in contradiction with $S /I$ being
nilpotent. So $x \in I$ or $y \in I$.

The equalities $ x= \lambda_{m}(x, y, w_{1}, w_{2},\ldots, w_{m})$
and $y = \rho_{m}(x, y, w_{1}, w_{2}, \ldots, w_{m})$ imply that
there exist elements $a, b, a', b' \in S^1$ such that $x = ayb,
y=a'xb'$. Again because $I$ is an ideal of $S$, it follows that
$x, y \in I$. Since $\langle x, y \rangle$ is not nilpotent, this
yields a contradiction with $I$ being nilpotent.
\end{proof}


The \B\ condition also includes restrictions on how elements in
different principal factors multiply. Indeed, for if an ideal $I$
of a semigroup $S$ and its factor semigroup $S/I$ are \B, then it
does not necessarily follow that $S$ is also \B. For example, the
semigroup $S = \{a, b, c, d, e\}$ with multiplication table\footnote{In order to check the associativity law for the constructed examples, a software is
developed in C++ programming language.}
$$
\begin{tabular}{ c|c c c c c}
      & $a$ & $b$ & $c$ & $d$ & $e$\\ \hline
  $a$ & $a$ & $a$ & $a$ & $a$ & $a$\\
  $b$ & $a$ & $b$ & $a$ & $a$ & $e$\\
  $c$ & $a$ & $a$ & $c$ & $d$ & $a$\\
  $d$ & $d$ & $d$ & $d$ & $d$ & $d$\\
  $e$ & $e$ & $e$ & $e$ & $e$ & $e$\\
\end{tabular}$$
has an ideal $I = \{a, d, e\}$ with  $I$ and $S /I$  \B. But $S$
is not \B, because there is no edge in ${\mathcal N}_S$ between
$b$ and $e$ but
$$(\lambda_{1}(b,e,a),\rho_{1}(b,e,a))=(\lambda_{2}(b,e,a,a),\rho_{2}(b,e,a,a)).$$

In \cite{Jes-shah} an example is given of
a finite semigroup that is not nilpotent and has empty upper non-nilpotent graph. The next lemma shows that such an example can not be \B.

\begin{lem}\label{empty graph}
If a finite semigroup $S$ is \B\ and ${\mathcal N}_{S}$ is empty,
then $S$ is nilpotent.
\end{lem}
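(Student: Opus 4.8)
The plan is to prove the contrapositive: assume $S$ is not nilpotent and $\mathcal{N}_S$ is empty, and derive that $S$ cannot be \B. Since $S$ is finite and not nilpotent, Lemma~\ref{finite-nilpotent} supplies a positive integer $m$, distinct elements $x,y \in S$, and elements $w_1,\ldots,w_m \in S^1$ such that $x = \lambda_m(x,y,w_1,\ldots,w_m)$ and $y = \rho_m(x,y,w_1,\ldots,w_m)$. The idea is to feed precisely this data into the definition of \B, taking the ideal $I$ to be empty (which is permitted, since the empty set counts as an ideal and $S/\emptyset = S$).

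First I would verify the hypotheses of the \B\ definition for the tuple $(x,y,w_1,\ldots,w_m)$ with $t=0$ and $I = \emptyset$. Since $x \neq y$, we have $\lambda_0 = x \neq y = \rho_0$, so $\lambda_0 \neq \rho_0$; and neither lies in the empty ideal. The pair equality required by the definition, namely $(\lambda_0,\rho_0) = (\lambda_m,\rho_m)$, is exactly the content of the conclusion of Lemma~\ref{finite-nilpotent}: $\lambda_m = x = \lambda_0$ and $\rho_m = y = \rho_0$. With $0 = t < m$, all the premises of the \B\ condition are met (applied to the semigroup $\langle x,y,w_1,\ldots,w_m\rangle / \emptyset = \langle x,y,w_1,\ldots,w_m\rangle$).

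The \B\ condition then forces edges in $\mathcal{N}_{\langle x,y,w_1,\ldots,w_m\rangle}$ between $\lambda_i$ and $\rho_i$ for every $0 \leq i \leq m$; in particular, taking $i=0$, there is an edge between $\lambda_0 = x$ and $\rho_0 = y$. Since $\langle x,y,w_1,\ldots,w_m\rangle$ is a subsemigroup of $S$ and non-nilpotence of $\langle x,y\rangle$ is an intrinsic property of that two-generated subsemigroup, this edge is also an edge in $\mathcal{N}_S$ between $x$ and $y$. This contradicts the assumption that $\mathcal{N}_S$ is empty, completing the argument.

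I expect the only delicate point to be the bookkeeping around the empty ideal: one must confirm that the definition genuinely allows $I = \emptyset$ (it does, by the convention stated just before the definition that the empty set is an ideal and $S/\emptyset = S$) and that with this choice the generated semigroup in which the edges are demanded is simply $\langle x,y,w_1,\ldots,w_m\rangle$ itself, a subsemigroup of $S$. Everything else is a direct substitution of Lemma~\ref{finite-nilpotent}'s conclusion into the \B\ definition, so no substantial computation is needed; the crux is recognizing that $t=0$ with the empty ideal is exactly the instance that converts the global non-nilpotence into the local edge $x\text{--}y$ in $\mathcal{N}_S$.
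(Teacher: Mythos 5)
Your proof is correct and follows essentially the same route as the paper's: apply Lemma~\ref{finite-nilpotent} to a non-nilpotent $S$, then invoke the \B\ condition (with $t=0$ and $I=\emptyset$, exactly as you spell out) to force an edge between $x$ and $y$ in ${\mathcal N}_S$, contradicting emptiness. The paper merely leaves the bookkeeping with the empty ideal implicit, whereas you make it explicit; there is no substantive difference.
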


\begin{proof}
Indeed, suppose the contrary. That is,
assume $S$ is not nilpotent. Then, by Lemma~\ref{finite-nilpotent},
there exists a positive integer $m$, distinct elements $x, y\in S$
and elements $ w_{1}, w_{2}, \ldots, w_{m}\in S^{1}$, such that $x
= \lambda_{m}(x, y, w_{1}, w_{2}, \ldots, w_{m})$ and $y =
\rho_{m}(x, y, w_{1}, w_{2}, \ldots, w_{m})$. However, as $S$ is
\B, we get that $\langle x, y \rangle$ is not nilpotent. This
contradicts with ${\mathcal N}_{S} = \emptyset$.
\end{proof}

Clearly for a finite semigroup we have the following implications
$$(MN) \Rightarrow (WMN),\;
(MN) \Rightarrow (NT) \; \mbox{ and } (MN) \Rightarrow \mbox{\B
}.$$ In~\cite{Jes-shah} an example is given of a finite semigroup
$S$ which is $(NT)$ but not $(MN)$ and with ${\mathcal N}_S =
\emptyset$. So $S$ is not \B. Of course a finite semigroup
$S$ for which ${\mathcal N}_S$ is complete and ${\mathcal N}_S
\neq \emptyset$ is \B\ but it is neither $(NT)$ nor $(WMN)$. Recall
from Proposition~3.4 in~\cite{Jes-shah} that every finite
semigroup $S$ for which ${\mathcal N}_S$ is complete and $\abs{S}
> 1$, is isomorphic with a completely simple semigroup
$\mathcal{M}(\{e\},I,\Lambda;P)$ (in particular it is a band). As
all elements of $P$ are equal to $e$, we have, for all $a,b\in S$,
$$a = \lambda_{2}(a,b,1_S,1_S), b = \rho_{2}(a,b,1_S,1_S) .$$
It follows that such semigroup is neither $(NT)$ nor $(WMN)$.

Hence the classes $(WMN)$, $(NT)$ and \B\ are pairwise distinct
classes containing the Mal'cev nilpotent semigroups (Recall from Corollary 12 in \cite{Riley} that a finite semigroup is (WMN) if and only if
it is (MN).). However, in
the following lemma we show that for finite groups these notions
are the same.

\begin{lem}\label{RN Groups}
A finite group is \B\ if and only if it is nilpotent.
\end{lem}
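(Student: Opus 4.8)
The backward implication is immediate: for a group the Mal'cev nilpotency coincides with ordinary nilpotency, and we already observed that $(MN)$ implies the \B\ property. For the forward implication I argue by contraposition and show that a non-nilpotent finite group $G$ cannot be \B. Since a non-nilpotent finite group contains a minimal non-nilpotent subgroup $H$ (a subgroup of least order among the non-nilpotent ones), and since subsemigroups of \B\ semigroups are again \B, it suffices to prove that such an $H$ is not \B. Here $H$ is a Miller--Moreno group: $H=P\rtimes\langle y\rangle$ with $P$ its normal Sylow $p$-subgroup and $\langle y\rangle$ a cyclic Sylow $q$-subgroup ($p\neq q$), and, crucially, \emph{every proper subgroup of $H$ is nilpotent}.

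My plan is to exhibit a single loop that violates the defining condition with the empty ideal $I=\emptyset$. Concretely, I will produce $a,b\in H$ with $a\neq b$ and elements $w_1,\dots,w_m\in H$ such that $a=\lambda_m(a,b,w_1,\dots,w_m)$ and $b=\rho_m(a,b,w_1,\dots,w_m)$, while for some intermediate index $i$ the pair $(\lambda_i,\rho_i)$ generates a \emph{proper} subgroup of $H$. Because proper subgroups of $H$ are nilpotent, there is then no edge between $\lambda_i$ and $\rho_i$ in $\mathcal{N}_{\langle a,b,w_1,\dots,w_m\rangle}=\mathcal{N}_{H/\emptyset}$; taking $t=0$, the equality $(\lambda_0,\rho_0)=(\lambda_m,\rho_m)$ with $\lambda_0\neq\rho_0$ triggers the hypothesis of the \B\ condition, whose conclusion (an edge between $\lambda_i$ and $\rho_i$ for \emph{every} $0\le i\le m$) fails at this $i$. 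Existence of such loops for the non-nilpotent $H$ is guaranteed in principle by Lemma~\ref{finite-nilpotent}; the point is to produce one passing through a proper-subgroup pair.

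To control the loop I will use the Sylow structure. Since $H/P\cong\langle y\rangle$ is abelian, the recursion collapses modulo $P$ to $\overline{\lambda}_n=\overline{\rho}_n$ for all $n\ge 1$, so the two coordinates always lie in a common coset of $P$ and their images in $H/P$ can be steered freely by the choice of $\overline{w}_n$. I therefore take $a=y$ and $b=y^g$, two non-commuting conjugates of $y$ (these exist because $\langle y\rangle$ is not normal): they lie in the same coset of $P$, and they generate a subgroup that is not a $q$-group, the Sylow $q$-subgroup being cyclic, hence a non-nilpotent, hence improper, subgroup, so $\langle a,b\rangle=H$. The clean case is a period-two loop: with $w_1=1$ the pair becomes $(ab,ba)$, and a braid-type relation $aba=bab$ allows a suitable $w_2$ to send it back to $(a,b)$, exactly as for $a=(1\,2)$, $b=(2\,3)$ in $S_3$, where the intermediate pair is $\bigl((1\,2\,3),(1\,3\,2)\bigr)$ and generates the proper subgroup $\langle(1\,2\,3)\rangle$.

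The main obstacle is precisely this closing step: guaranteeing in \emph{every} Miller--Moreno group a loop that both returns to an off-diagonal pair and meets a proper-subgroup pair, since the short braid construction above is special to $q=2$. When no such short loop is available I would instead invoke finiteness: fixing a periodic pattern of the $w_n$ that forces the images in $H/P$ to revisit the trivial coset (so that some $(\lambda_i,\rho_i)$ lands in $P\times P$ and thus generates a proper subgroup), the sequence of pairs is eventually periodic and hence some pair recurs. The coset analysis keeps the cycle off the diagonal, because once $\lambda_n=\rho_n$ one has $\lambda_{n+1}=\rho_{n+1}$, which would force $a=b$; this also shows the generated subsemigroup must equal $H$, so its graph is nonempty as required for the trigger. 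Verifying that such a recurrence exists, stays off-diagonal, and traverses $P\times P$ is the technical heart of the argument, while the reductions to $H$ and to the empty ideal are routine.
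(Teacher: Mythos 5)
There is a genuine gap: your argument is a plan rather than a proof, and you say so yourself (``Verifying that such a recurrence exists, stays off-diagonal, and traverses $P\times P$ is the technical heart of the argument''). That heart is exactly what is missing. The braid-relation loop is special to the $S_3$-type case, and your fallback via finiteness does not close: the recurrence $(\lambda_t,\rho_t)=(\lambda_r,\rho_r)$ produced by pigeonhole is not at $t=0$ in general, so your observation that hitting the diagonal ``would force $a=b$'' does not apply; one can have $\lambda_n\neq\rho_n$ but $\lambda_{n+1}=\rho_{n+1}$ (this needs only that $\rho_n^{-1}\lambda_n$ be conjugate to $\lambda_n\rho_n^{-1}$ by the chosen $w_{n+1}$, and these two elements are always conjugate), after which every later pair is diagonal and the \B\ hypothesis is never triggered. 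You would also still have to guarantee that some $(\lambda_i,\rho_i)$ with $0\le i\le m$ lands in a proper (hence nilpotent) subgroup for the particular recurrent segment you find. None of this is carried out, and the Miller--Moreno reduction, while correct, does not make it easier.

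The paper avoids all of this with one shift. It uses the fact that a finite group is nilpotent iff it is Neumann--Taylor, so it suffices to show a \B\ finite group is $(NT)$. If $G$ is not $(NT)$, pick $a\neq b$ and $w_1,\dots,w_{k^2}$ ($k=|G|$) with $\lambda_{k^2+1}(a,b,1_G,w_1,\dots)\neq\rho_{k^2+1}(a,b,1_G,w_1,\dots)$. In a group, equality of a pair propagates forward, so all pairs up to index $k^2+1$ are off-diagonal, and pigeonhole on $G\times G$ gives a repeated pair. Now the key observation: $\lambda_n(a,b,1_G,w_1,\dots)=\lambda_n(a,1_G,b,w_1,\dots)$ and likewise for $\rho_n$, for all $n\ge 1$. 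Rewriting the repetition for the second family and applying the \B\ condition with $I=\emptyset$ forces an edge at \emph{every} index, in particular at index $0$ between $a$ and $1_G$ --- but $\langle a,1_G\rangle$ is cyclic, hence nilpotent. This is precisely the ``pair generating a proper nilpotent subgroup'' you were hunting for, obtained for free at $i=0$ by moving $b$ into the first $z$-slot rather than engineering it mid-loop. If you want to salvage your approach, that substitution is the missing idea.
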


\begin{proof}
Let $G$ be a finite group. Of course if $G$ is nilpotent then $G$
is \B. For the converse, recall that  $G$ is nilpotent if and only
if it is Neumann Taylor. So assume $G$ is \B. We need to prove
that $G$ is Neumann Taylor.  We prove this by contradiction. So
suppose $G$ is not $(NT)$. Let $k=\abs{G}$. There exist distinct
elements $a, b\in G$ and $w_{1}, \ldots, w_{k^2}$ $ \in G$ such
that
$$\lambda_{k^2+1}(a,b,1_G, w_{1},\ldots, w_{k^2})
\neq \rho_{k^2+1}(a,b,1_G, w_{1}, \ldots, w_{k^2}).$$ Since
$\abs{G\times G} =k^2$ there exist non-negative integers $t, r
\leq k^2$, $t< r$ with
 \begin{eqnarray*}
 \lefteqn{
 (\lambda_{t+1}(a,b,1_G,w_{1},\ldots, w_{t}),
\rho_{t+1}(a,b,1_G,w_{1},\ldots, w_{t})) } \\&=&
(\lambda_{r+1}(a,b,1_G,w_{1}, \ldots, w_{r}),
\rho_{r+1}(a,b,1_G,w_{1},\ldots, w_{r})).
 \end{eqnarray*}

Therefore we also have \begin{eqnarray*}
 \lefteqn{
 (\lambda_{t+1}(a,1_G,b,w_{1},\ldots, w_{t}),
\rho_{t+1}(a,1_G,b,w_{1},\ldots, w_{t})) } \\&=&
(\lambda_{r+1}(a,1_G,b,w_{1}, \ldots, w_{r}),
\rho_{r+1}(a,1_G,b,w_{1},\ldots, w_{r})).
 \end{eqnarray*}
Because $G$ is \B, it implies that there is an edge in ${\mathcal
N}_{S}$ between $1_G$ and $a$, a contradiction.
\end{proof}

Recall from~\cite{Jes-shah} that the lower non-nilpotent graph
${\mathcal L}_{S}$ of a semigroup $S$ is the graph whose vertices
are the elements of $S$ and there is an edge between two distinct
vertices $x,y\in S$ if and only if there exist elements $w_{1},
w_{2}, \ldots, w_{n}$ in $\langle x, y \rangle^{1}$ with  $x=
\lambda_{n}(x, y, w_{1}, w_{2}, \ldots, w_{n})$ and $y=
\rho_{n}(x, y, w_{1}, w_{2}, \ldots, w_{n})$. Clearly, because of
Lemma~\ref{finite-nilpotent}, ${\mathcal L}_{S}$ is a subgraph of
${\mathcal N}_{S}$. In general ${\mathcal L}_{S}$ and ${\mathcal
N}_{S}$ are different.

\begin{prop}
Let $S$ be a finite semigroup. If $S$ is \B\ and ${\mathcal
L}_{S}$ is empty, then ${\mathcal N}_{S}$ is empty.
\end{prop}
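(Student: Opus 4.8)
The plan is to argue by contradiction: assuming $\mathcal{N}_S\neq\emptyset$, I will exhibit an edge of $\mathcal{L}_S$, contradicting the hypothesis. The only genuine difficulty lies in the single point where the two graphs differ. An edge of $\mathcal{N}_S$ between $x$ and $y$ says merely that $\langle x,y\rangle$ is not nilpotent, and Lemma~\ref{finite-nilpotent} then furnishes connecting words $w_1,\dots,w_m$ that are allowed to range over the whole ambient semigroup; an edge of $\mathcal{L}_S$, by contrast, demands such words inside the far smaller subsemigroup $\langle x,y\rangle^1$. Thus the entire task is to force the witnessing words into $\langle x,y\rangle^1$.

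My device for this is to pass to a minimal non-nilpotent subsemigroup. Since $\mathcal{N}_S\neq\emptyset$, some two-generated subsemigroup of $S$ is not nilpotent, so the family of non-nilpotent subsemigroups of $S$ is nonempty; by finiteness I may choose a member $T$ that is minimal under inclusion. Being a subsemigroup of a \B\ semigroup, $T$ is again \B, and it is finite and not nilpotent, so Lemma~\ref{finite-nilpotent} yields an integer $m\geq 1$, distinct elements $x,y\in T$, and $w_1,\dots,w_m\in T^1$ with $x=\lambda_m(x,y,w_1,\dots,w_m)$ and $y=\rho_m(x,y,w_1,\dots,w_m)$. Writing $\lambda_i,\rho_i$ for the intermediate terms, I have $(\lambda_0,\rho_0)=(\lambda_m,\rho_m)=(x,y)$ with $x\neq y$.

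Now I invoke the \B\ property of $T$ with the data $x,y\in T$, $w_1,\dots,w_m\in T^1$, the empty ideal $I=\emptyset$ of $U:=\langle x,y,w_1,\dots,w_m\rangle$, and the indices $t=0<m$: the hypotheses $\lambda_0\neq\rho_0$, $(\lambda_0,\rho_0)=(\lambda_m,\rho_m)$ and $\lambda_m,\rho_m\notin\emptyset$ all hold, so $\mathcal{N}_{U/\emptyset}=\mathcal{N}_U$ has an edge between $\lambda_i$ and $\rho_i$ for every $0\leq i\leq m$. Reading off $i=0$ tells me that $\langle x,y\rangle$ is not nilpotent. As $\langle x,y\rangle\subseteq U\subseteq T$ is a non-nilpotent subsemigroup, minimality of $T$ forces $\langle x,y\rangle=U=T$, whence each $w_i$ already lies in $T^1=\langle x,y\rangle^1$. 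The relations $x=\lambda_m(x,y,w_1,\dots,w_m)$ and $y=\rho_m(x,y,w_1,\dots,w_m)$ then witness an edge of $\mathcal{L}_T$, and hence of $\mathcal{L}_S$, between $x$ and $y$, the desired contradiction.

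The step I expect to carry the weight is the middle one: using the \B\ definition with the empty ideal to promote the global relation ``$x,y$ close up a long $\lambda$--$\rho$ loop'' into the strictly local statement ``$\langle x,y\rangle$ is itself not nilpotent,'' which is exactly what lets minimality collapse $\langle x,y\rangle$ onto $T$. Two minor points still need a line of care: that $\langle x,y\rangle$ is literally the same subsemigroup whether formed inside $T$ or inside $S$, so that the edge produced in $\mathcal{L}_T$ is indeed an edge of $\mathcal{L}_S$; and that, since the defining words of an $\mathcal{L}_S$-edge must lie in $\langle x,y\rangle^1$, the identification $\langle x,y\rangle=T$ is precisely the equality that makes this membership automatic.
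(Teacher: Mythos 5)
Your proof is correct and is essentially the paper's argument: the paper iterates $S_{i+1}=\langle x_i,y_i\rangle$ and uses finiteness to reach a stage where the chain stabilizes, and that stable stage is exactly your minimal non-nilpotent subsemigroup $T$ with $\langle x,y\rangle=T$. Apart from this packaging (minimality versus an explicit descending chain), all the key steps coincide: Lemma~\ref{finite-nilpotent}, the \B\ property with the empty ideal to produce the edge between $x$ and $y$, and the identification $T^{1}=\langle x,y\rangle^{1}$ that turns the $\mathcal{N}_S$-edge into an $\mathcal{L}_S$-edge.
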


\begin{proof} Suppose
${\mathcal N}_{S}$ is not empty. Then $S$ is not nilpotent and
hence, by Lemma~\ref{finite-nilpotent}, there exists a positive
integer $m$ and distinct elements $x, y \in S$ and $w_{1}, w_{2},
\ldots, w_{m}\in S^{1}$ such that $x = \lambda_{m}(x, y, w_{1},
w_{2}, \ldots, w_{m})$ and $y = \rho_{m}(x, y, w_{1}, w_{2},
\ldots, w_{m})$. Since, by assumption, $S$ is \B,  we get that
$S_1 = \langle x, y \rangle$ is not nilpotent.

Note that the subsemigroup $S_1$ is \B. Since $S_1$ is not nilpotent,  there exists a
positive integer $m_1$, distinct elements $x_1, y_1 \in S_1$ and
$w^{'}_{1}, w^{'}_{2}, \ldots, $ $w^{'}_{m_{1}} \in S_{1}^{1}$
such that $x_1 = \lambda_{m_1}(x_1, y_1, w^{'}_{1}, w^{'}_{2},
\ldots, w^{'}_{m_1})$ and $y_1 = \rho_{m_1}(x_1, y_1, w^{'}_{1},$
$ w^{'}_{2}, \ldots, w^{'}_{m_1})$. Therefore $S_2 = \langle x_1,
y_1 \rangle$ is not nilpotent.

Let $x_{0}=x$ and $y_{0}=y$. By induction we obtain for each
non-negative integer $i$ a subsemigroup $S_i$ of $S$ such that
  $$S_{0}=S, \quad  S_i= \langle x_{i-1},
y_{i-1} \rangle   (\mbox{ for } i\geq 1)$$ and elements
$$x_i, y_i, w^{(i)}_{1}, w^{(i)}_{2}, \ldots, w^{(i)}_{m_i} \in S_{i}^{1}$$
such that
$$x_i = \lambda_{m_i}(x_i,
y_i, w^{(i)}_{1}, \ldots, w^{(i)}_{m_i}), y_i = \rho_{m_i}(x_i,
y_i, w^{(i)}_{1}, \ldots, w^{(i)}_{m_i})$$ and $$x_i \neq y_i.$$

Clearly,
$$S \supseteq S_1 \supseteq S_2 \supseteq \cdots $$
of subsemigroups of $S$. Since $S$ is finite, we get that $S_t=
S_{t+1}$ for some positive integer  $t$. Hence $x_t \neq y_t$ and
$$x_t = \lambda_{m_t}(x_t,
y_t, w^{(t)}_{1}, \ldots, w^{(t)}_{m_t}),\; y_t = \rho_{m_t}(x_t,
y_t, w^{(t)}_{1}, \ldots, w^{(t)}_{m_t}),$$
$$x_t, y_t, w^{(t)}_{1},
w^{(t)}_{2}, \ldots, w^{(t)}_{m_t} \in S_{t}^{1} = S_{t+1}^{1}=
\langle x_t, y_t \rangle.$$ So, in the graph ${\mathcal
L}_{\langle x_t, y_t \rangle}$ there is an edge between  $x_t$ and
$y_t$. Since ${\mathcal L}_{\langle x_t, y_t \rangle}$ is a
subgraph of ${\mathcal L}_{S}$, we obtain that the graph
${\mathcal L}_{S}$ is not empty, as desired.
\end{proof}

We finish this section with proving one more restriction that the
\B\ condition imposes.

\begin{lem}\label{empty ideal}
Let $S$ be a \B\ finite semigroup. If $I$ is an ideal of $S$ and
${\mathcal N}_{I}$ is empty, then elements of $I$ are isolated
vertices in ${\mathcal N}_{S}$.
\end{lem}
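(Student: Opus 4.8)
The plan is to argue by contradiction. Suppose some element $x \in I$ is not isolated in $\mathcal{N}_S$; then there is an edge joining $x$ to some $y \in S$ with $y \neq x$, so that $T := \langle x, y\rangle$ is not nilpotent. Since $I$ is a subsemigroup of the \B\ semigroup $S$, it is itself \B, and because $\mathcal{N}_I$ is empty, Lemma~\ref{empty graph} forces $I$ to be nilpotent. Likewise $T$ is \B, being a subsemigroup of $S$.

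Next I would pass to the Rees quotient of $T$ by the set $J := I \cap T$. One checks readily that $J$ is an ideal of $T$: if $a \in I \cap T$ and $t \in T$, then $ta, at \in I$ because $I$ is an ideal of $S$, and $ta, at \in T$ because $T$ is a subsemigroup. Moreover $x \in J$, so $J$ is nonempty, and $J \subseteq I$ is nilpotent since subsemigroups of nilpotent semigroups are nilpotent.

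The key step is to identify $T/J$. Every element of $T = \langle x, y\rangle$ is a word in $x$ and $y$; if such a word involves at least one occurrence of $x$, then, since $x \in I$ and $I$ is an ideal of $S$, the whole word lies in $I \cap T = J$ and is therefore collapsed to the zero $\theta$ of $T/J$. Consequently the nonzero elements of $T/J$ are exactly the images of the powers of $y$, so $T/J$ is generated by the single element $\overline{y}$ together with $\theta$. In particular $T/J$ is commutative, and hence nilpotent (of class $1$).

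Finally I would invoke Lemma~\ref{empty}: we have a \B\ finite semigroup $T$ with an ideal $J$ such that both $J$ and $T/J$ are nilpotent, whence $T$ is nilpotent, contradicting the choice of $x$ and $y$. Thus every element of $I$ is isolated in $\mathcal{N}_S$. I expect the only delicate point to be the description of $T/J$ in the third paragraph — specifically the observation that absorbing all words containing $x$ leaves a one-generated, hence commutative, quotient; the remainder is routine bookkeeping with ideals together with the cited lemmas.
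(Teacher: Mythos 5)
Your proof is correct, and it takes a somewhat different route from the paper's. The paper argues directly: given an edge between $a\in S\setminus I$ and $b\in I$, it writes $\langle a,b\rangle=\langle a\rangle\cup B$ with $B\subseteq I$, extracts via Lemma~\ref{finite-nilpotent} a witness pair $x=\lambda_m$, $y=\rho_m$ with $x\neq y$, uses the nilpotency of the cyclic semigroup $\langle a\rangle$ and the ideal property to force $x,y\in I$, and then invokes the \B\ property to produce an edge of ${\mathcal N}_I$ --- contradicting the hypothesis that ${\mathcal N}_I$ is empty. You instead package the same underlying mechanism into a single appeal to Lemma~\ref{empty}: you observe that $J=I\cap T$ is a nonempty nilpotent ideal of $T=\langle x,y\rangle$ (nilpotency of $I$ coming from Lemma~\ref{empty graph}), that every word containing an occurrence of $x$ collapses in $T/J$, so that $T/J$ is a quotient of a cyclic semigroup with zero adjoined, hence commutative, hence Mal'cev nilpotent of class $1$, and then conclude that $T$ is nilpotent. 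All the steps check out, including the observation that commutativity gives $\lambda_1=xz y=yzx=\rho_1$; and the argument covers uniformly the degenerate case $y\in I$ (where $T/J$ is trivial). What your reduction buys is economy --- the \B\ machinery is invoked only once, inside Lemma~\ref{empty} --- at the mild cost of not exhibiting explicitly where the contradiction lands (namely, an edge of ${\mathcal N}_I$), which is the piece of information the paper's direct argument makes visible.
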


\begin{proof}
We prove the result by contradiction. So, suppose that there
exists an edge in ${\mathcal N}_{S}$ between $a \in S \backslash
I$ and $b \in I$. Hence $\langle a, b \rangle = \langle a\rangle
\cup B$, for some subset $B$ of $I$. Because $\langle a, b
\rangle$ is not nilpotent,  by Lemma~\ref{finite-nilpotent}, there
exists a positive integer $m$ and elements $x, y, w_1,w_2,
\ldots,w_m$ of $\langle a, b \rangle^1$ such that $x =
\lambda_m(x, y,w_1,w_2, \ldots,w_m), y =\rho_m(x, y,w_1,w_2,
\ldots,w_m)$ and $x \neq y$. As the cyclic semigroup $\langle a
\rangle$ is nilpotent, we get that $\{x, y,w_1,w_2, \ldots,w_m\}
\cap B \neq \emptyset$. Because  $I$ is an ideal, the equalities
$x = \lambda_m(x, y,w_1,w_2, \ldots,w_m)$ and $y =\rho_m(x,
y,w_1,w_2, \ldots,w_m)$ yield that $x, y \in I$. Since, by
assumption, $S$ is \B, we know that $\langle x, y \rangle$ is not
nilpotent, i.e. there is an edge in ${\mathcal N}_S$ between $x$
and $y$. However, there is contradiction with ${\mathcal N}_{I}=
\emptyset$.
\end{proof}

\section{A description of \B\ semigroups}\label{}

We begin by describing the completely $0$-simple semigroups that
are \B. The non-zero elements of a completely $0$-simple semigroup
$\mathcal{M}^0(G,I,\Lambda;P)$ over a group $G$ are denoted as
$(g; i,\lambda )$, with $g\in G$, $i\in I$ and $\lambda \in
\Lambda$. Also, we denote the sets $\{(g; i, \lambda)\mid g
\in G\}, \{(g; i, \lambda)\mid g \in G, \lambda \in \Lambda\}$ and
$\{(g; i, \lambda)\mid g \in G, i \in I\}$  by $G_{i,\lambda},
G_{i,\star}$ and $G_{\star,\lambda}$ respectively.

\begin{lem}\label{RN Rees semigroups}
Let $S$ be a \B\ finite semigroup. Assume $A$ and $B$ are ideals
of $S$ such that $A \subseteq B$ and $B / A =
\mathcal{M}^0(G,I,\Lambda;P)$ is a regular Rees matrix
semigroup. Then either $B/A$ is nilpotent (i.e. $G$ is nilpotent
and $B/A$ is an inverse semigroup) or  $\abs{I}+\abs{\Lambda}>2$,
$G$ is a nilpotent group and all entries of $P$ are non-zero (i.e. $B/A$
is a union of groups, and thus $B\backslash A$ is a semigroup).

Conversely, any completely $0$-simple semigroup
$\mathcal{M}^0(G,I,\Lambda;P)$, with $G$ a nilpotent group and all
entries of $P$ non-zero, is \B. Furthermore, if
$\abs{I}+\abs{\Lambda}>2$ and $(i,\lambda )\neq (i',\lambda ')$
then $\langle (g;i,\lambda ),\, (g';i', \lambda ')\rangle$ is not
nilpotent for all $g,g'\in G$. In particular, the upper
non-nilpotent graph of $\mathcal{M}^0(G,I,\Lambda;P)\backslash \{\theta\}$ is connected
and regular.
\end{lem}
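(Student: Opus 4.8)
The plan is to split the statement into the structural (forward) claim, the converse, and the two final assertions, reducing everything to the case $S=\mathcal{M}^0(G,I,\Lambda;P)$. Since $B$ is a subsemigroup of $S$ and $B/A$ a Rees factor of $B$, and these operations preserve the \B\ property, $B/A$ is itself \B; so I may assume $S=\mathcal{M}^0(G,I,\Lambda;P)$ is \B. By regularity some $p_{\lambda i}\neq\theta$, and then $G_{i,\lambda}$ is a subgroup isomorphic to $G$; being a subsemigroup of a \B\ semigroup it is \B, so by Lemma~\ref{RN Groups} $G$ is nilpotent. If $S$ is inverse, the classification of nilpotent completely $0$-simple semigroups quoted in the introduction gives that $S$ is nilpotent, the first alternative. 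If $S$ is not inverse, then it is not nilpotent; the case $|I|=|\Lambda|=1$ would make $S$ inverse, so $|I|+|\Lambda|>2$, and it remains to prove that all entries of $P$ are non-zero.

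For this last point I argue by contradiction, mimicking the displayed completely $0$-simple example with a zero entry in its sandwich matrix. Suppose $p_{\lambda_0 i_0}=\theta$. Non-invertibility together with regularity produces a row (or, dually, a column) of $P$ with two non-zero entries, hence a pair of idempotents $e_1,e_2$ with $e_1e_2=e_1$, $e_2e_1=e_2$. The aim is to exhibit $x\in G_{i_x,\lambda_0}$ and $y\in G_{i_0,\lambda_0}$ with $xy=\theta$ (using the zero entry), so that $\langle x,y\rangle$ collapses onto $\theta$ and is nilpotent, i.e. there is no edge between $x$ and $y$; and simultaneously a ``bridge'' $w$ routing around the zero entry so that $\lambda_1(x,y,w)=x$ and $\rho_1(x,y,w)=y$. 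Taking $I=\emptyset$, $t=0$, $m=1$ then violates the definition of a \B\ semigroup. The main obstacle of the whole lemma lies here: such a bridge exists only if the supports of the relevant rows and columns of $P$ overlap, and in general one must chain several bridging steps, using regularity and non-invertibility to guarantee that the required non-zero entries line up; this connectivity bookkeeping is the delicate part.

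For the converse, assume all entries of $P$ are non-zero, so $S\setminus\{\theta\}=\mathcal{M}(G,I,\Lambda;P)$ is a completely simple subsemigroup and products of non-zero elements are non-zero. I use that in $(g;i,\lambda)(g';i',\lambda')=(gp_{\lambda i'}g';i,\lambda')$ the first coordinate is inherited from the left factor and the last from the right. Given data $x,y,w_1,\dots,w_m$, an ideal $I$ of $T=\langle x,y,w_1,\dots,w_m\rangle$ with $\lambda_m,\rho_m\notin I$, and $t<m$ with $\lambda_t\neq\rho_t$ and $(\lambda_t,\rho_t)=(\lambda_m,\rho_m)$, one first checks $x,y\neq\theta$ (otherwise $\lambda_m$ or $\rho_m$ equals $\theta\in I$), hence every $\lambda_i,\rho_i$ is non-zero; and since $\lambda_{i+1}\in\lambda_iT^1$ and $\rho_{i+1}\in\rho_iT^1$, forward-closure of the ideal together with $\lambda_m,\rho_m\notin I$ forces $\lambda_i,\rho_i\notin I$ for all $0\le i\le m$. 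The coordinate rule shows the first coordinates of the $\lambda_i$ (resp. $\rho_i$) are constant, equal to that of $x$ (resp. $y$), while the last coordinates swap between those of $x$ and $y$. If $x$ and $y$ lay in a common $\mathcal{H}$-class the whole sequence would stay in that group $\mathcal{H}$-class $\cong G$; writing $d_n=\lambda_n^{-1}\rho_n$ there one computes $d_{n+1}=[d_n,g_n]$ for suitable $g_n\in G$, so $d_n\in\gamma_{n+1}(G)$ in the lower central series, and extending the cycle periodically gives $d_t\in\gamma_N(G)$ for every $N$, whence $d_t=1$ and $\lambda_t=\rho_t$, a contradiction. Therefore $x,y$, and hence all $\lambda_i,\rho_i$, lie in pairwise distinct $\mathcal{H}$-classes (differing in the first or the last coordinate).

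Finally I produce the edges and the last two assertions. First record the coordinate computation behind the third assertion: if $(i,\lambda)\neq(i',\lambda')$ then, starting the Malcev sequences from $(g;i,\lambda)$ and $(g';i',\lambda')$, the first coordinates remain $i$ and $i'$ when $i\neq i'$, while the last coordinates remain $\lambda$ and $\lambda'$ when $i=i'$ and $\lambda\neq\lambda'$; in either case $\lambda_n\neq\rho_n$ for all $n$, so $\langle(g;i,\lambda),(g';i',\lambda')\rangle$ is not nilpotent, and more generally any completely simple semigroup with more than one $\mathcal{H}$-class is not nilpotent. Now fix $i$ and let $e,f\in\langle\lambda_i,\rho_i\rangle$ be the idempotent powers of $\lambda_i$ and $\rho_i$; from $e\lambda_i=\lambda_i\notin I$ and $f\rho_i=\rho_i\notin I$ we get $e,f\notin I$. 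The subsemigroup $E=\langle e,f\rangle$, being generated by idempotents of a completely simple semigroup, is itself completely simple, hence simple, so its ideal $E\cap I$, which omits $e$, must be empty; thus $E$ embeds into $T/I$. Since $e$ and $f$ lie in distinct $\mathcal{H}$-classes, $E$ is not nilpotent, so its image is a non-nilpotent two-generated subsemigroup of $T/I$ and provides the edge between $\lambda_i$ and $\rho_i$ in $\mathcal{N}_{T/I}$; this is exactly the collapse-avoidance the quotient requires. The last assertion follows: on $S\setminus\{\theta\}$ two vertices are adjacent precisely when they lie in different $\mathcal{H}$-classes, so each vertex has degree $|G|(|I|\,|\Lambda|-1)$, giving regularity, and since $|I|\,|\Lambda|\ge2$ any two vertices are joined directly or through a common neighbour, giving connectedness.
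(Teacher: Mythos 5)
Your converse half is essentially complete and is in fact more detailed than the paper, which dismisses it with ``the second part easily can be verified'': the coordinate bookkeeping (first coordinates of the $\lambda_i$, $\rho_i$ constant, last coordinates swapping), the reduction of the common-$\mathcal{H}$-class case to nilpotency of $G$ by extending the cycle periodically, and the passage to the idempotent powers $e,f$ of $\lambda_i,\rho_i$, whose generated completely simple subsemigroup meets the ideal $I$ trivially and hence survives in $T/I$, correctly produce the required edges in $\mathcal{N}_{T/I}$. The regularity and connectedness of the graph also follow as you say. (Minor quibble: $x\neq\theta$ is best justified by noting that $x=\theta$ forces $\lambda_m=\rho_m=\theta$, not by ``$\theta\in I$'', since $I$ may be empty.)

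The forward direction, however, contains a genuine gap which you flag yourself. To refute a zero entry $p_{\lambda_0 i_0}=\theta$ you propose to find $x,y$ with $xy=\theta$ (so $\langle x,y\rangle$ is nilpotent) together with a bridge $w$ giving $xwy=x$ and $ywx=y$; but such a $w=(h;i_w,\lambda_w)$ needs a row $\lambda_w$ of $P$ that is non-zero simultaneously at the columns of $x$ and $y$ while $p_{\lambda_0 i_w}\neq\theta$, and you concede that ``in general one must chain several bridging steps'' whose existence you never establish. That alignment problem is exactly the content of the lemma, so the proof is incomplete at its core. The paper sidesteps the construction by running the \B\ hypothesis in the opposite direction: taking two non-zero entries $p_{j,i_1},p_{j,i_2}$ in a single row, another row $j'$, and a regularity witness $p_{j',\alpha}\neq\theta$, it checks that $x=(p_{j,i_1}^{-1};i_1,j')$, $y=(p_{j,i_2}^{-1};i_2,j')$, $w_1=(p_{j',\alpha}^{-1};\alpha,j)$ satisfy $(\lambda_1,\rho_1)=(\lambda_0,\rho_0)=(x,y)$, so the \B\ condition \emph{forces} an edge between $x$ and $y$. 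If $p_{j',i_1}$ were $\theta$, every product of $\langle x,y\rangle$ with $G_{i_1,j'}$ would vanish, so the elements witnessing non-nilpotency of $\langle x,y\rangle$ would all be trapped in the single maximal subgroup $G_{i_2,j'}\cong G$, contradicting Lemma~\ref{RN Groups}. Hence the columns $i_1,i_2$ are entirely non-zero, and regularity propagates this to every column. You should replace your bridge construction by an argument of this kind, or else supply the missing connectivity argument, which is not routine.
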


\begin{proof}
Because $B/A$ is regular, each row and column of $P$ contains a
non-zero entry. Lemma~2.1 of~\cite{Eric} implies that if each row
and column does not contain more than one non-zero element, then
$B/A$ is nilpotent.

Suppose $B/A$ is not nilpotent. Then, without loss of generality,
we may assume that some row of $P$ contains more than one non-zero
element. Say $p_{j,i_1}, p_{j,i_2} \neq \theta$. If
$\Lambda = \{j\}$, because $B/A$ is regular, all
columns are non-zero, and hence all elements of $P$ are non-zero.
Otherwise, let $j' \in \Lambda, \, j' \neq j$.
Because $B/A$ is regular, there exists $\alpha \in
I$ such that $p_{j',\alpha} \neq \theta$. Now we
have
$$((p_{j,i_1}^{-1}; i_1, j'),(p_{j,i_2}^{-1};i_2,j')) =
(\lambda_1((p_{j,i_1}^{-1}; i_1, j'),(p_{j,i_2}^{-1};i_2,j'),
(p_{j',\alpha}^{-1}; \alpha, j)),$$ $$\rho_1((p_{j,i_1}^{-1}; i_1,
j'),(p_{j,i_2}^{-1};i_2,j'), (p_{j',\alpha}^{-1}; \alpha, j))).$$
Hence, because $S$ is \B, there is an edge in ${\mathcal N}_{S/A}$
between $(p_{j,i_1}^{-1}; i_1, j')$ and $(p_{j,i_2}^{-1};i_2,j')$.
Consequently,  by Lemma~\ref{finite-nilpotent}, there exists a
positive integer $m$ and elements $x, y, w_{1}, w_{2}, \ldots,
w_{m}$ of $\langle (p_{j,i_1}^{-1}; i_1, j'),
(p_{j,i_2}^{-1};i_2,$ $j') \rangle^{1}$ such that $x =
\lambda_{m}(x, y,$ $ w_{1}, w_{2}, \ldots, w_{m})$, $y =
\rho_{m}(x, y, w_{1}, w_{2}, \ldots, w_{m})$ and $x\neq y$.

If $p_{j',i_1}=\theta$ then, for $ g \in G$ and working in $S/A$,
$$\langle(p_{j,i_1}^{-1};
i_1, j'), (p_{j,i_2}^{-1};i_2,j') \rangle (g; i_1, j') =\theta
.$$ Hence $\{x, y, w_{1},$ $ w_{2},
\ldots, w_{m}\} \subseteq G_{i_2, j'}$ and
$p_{j',i_2} \neq \theta$, because it is impossible that $\theta
\in \{x, y\}\{w_{1}, w_{2}, \ldots, w_{m}\}$ or $\theta \in
\{w_{1}, w_{2}, \ldots, w_{m}\}\{x, y\}$.

Consequently, the group $G_{i_2, j'} \cong G$ is
not nilpotent. However, as a subsemigroup of $S$, it is \B\ and
thus, by Lemma~\ref{RN Groups}, it is a nilpotent group. This
yields a contradiction. So we have proved that $p_{j',i_1} \neq
\theta$. Similarly $p_{j',i_2} \neq \theta$. Thus, $p_{k,i_1} \neq
\theta, p_{k,i_2}\neq \theta$ for all $k \in
\Lambda$. That is, columns $i_1$ and $i_2$ of
$P$ do not contain $\theta$.

Let $i_3 \in I$. Because $B/A$ is regular, there
exists $l \in \Lambda$ such that $p_{l,i_3} \neq
\theta$. As also $p_{l,i_1} \neq \theta$, the above yields that
column $i_3$ of $P$ also not contain $\theta$. So all entries of
$P$ are different from $\theta$. This finishes the first part of
the result.

The second part easily can be verified.
\end{proof}

Next we describe how a completely $0$-simple factor fits into  a
\B\ semigroup, in particular, we investigate the restrictions on
how elements in different principal factors multiply. To do so we
introduce some notations. Let $S$ be a semigroup with a
zero $\theta $. For an ideal $J$ of $S$ let
$$F_J(S) = F_J = \{s \in S \backslash J\mid sj \neq \theta  \text{ or } js\neq
\theta  \text{ for some } j \in J\}$$ and let
$$F'_J(S) = F'_J =  S \backslash (F_J \cup (J \backslash \{\theta \})).$$
Obviously, $F'_J \cup J$ is an ideal of $S$ and if $x \in F'_J$
and $y \in J$, then $\langle x, y \rangle$ is nilpotent, i.e.
there is no edge in ${\mathcal N}_{S}$ between $x$ and $y$.

Note however that for an arbitrary   \B\ semigroup $S$ with zero the set $F'_J(S)$ is not necessarily an ideal for an ideal $J$  of $S$.
For example let $S=\{\theta ,a,b,c\}$ be the semigroup with Cayley table
$$
\begin{tabular}{ c|c c c c}
           & $\theta$ & $a$        & $b$      & $c$\\ \hline
$\theta$   & $\theta$ & $\theta$   & $\theta$ & $\theta$\\
$a$        & $\theta$ & $\theta$   & $a$      & $\theta$\\
$b$        & $\theta$ & $a$        & $b$      & $a$  \\
$c$        & $\theta$ & $\theta$   & $a$      & $\theta$\\
\end{tabular}
$$
Then $J=\{\theta,a\}$ ia an ideal of $S$, $F_J(S)=\{b\}$ and $F'_J(S)=\{\theta,c\}$. Since $bc=a$, we have that $F'_J(S)$
is not an ideal of $S$. Note furthermore that $S$ is commutative, thus it is $(MN)$ and hence \B.

Recall that if $B$ is a band, i.e. a semigroup of idempotents,
then a semigroup $S$ is said to be a $B$-band union of
subsemigroups $S_{b}$, with $b\in B$, provided that
$S=\bigcup_{b\in B} S_{b}$, a disjoint union and
$S_{b_{1}}S_{b_{2}}\subseteq S_{b_{1}b_{2}}$ for all
$b_{1},b_{2}\in B$.

\begin{lem}\label{complete}
Let $S$ be a \B\ finite semigroup. Assume $J$ is an ideal of $S$
and $J=\mathcal{M}^0(G,I,\Lambda;P)$, a regular Rees matrix
semigroup. Assume $J$ is not nilpotent. The following properties
hold.

\begin{enumerate}
    \item \label{complete1} $F_J = \{s \in S \backslash J\mid sj \neq \theta,  js\neq \theta, j'sj
    \neq \theta  \text{ for all } j, j' \in J \backslash \{\theta \}\}$ and hence
    $F_J \cup (J \backslash \{\theta \})$ is a subsemigroup of $S$ and $F'_J(S)$ is an ideal of $S$.
    \item \label{complete2} If $a \in F_J \cup (J \backslash \{\theta \})$, then there exists a unique pair
    $(e, f)= (e(a), f(a)) $ $ \in I \times \Lambda$
    such that there is no edge in
${\mathcal N}_S$ between $a$ and $(g; e, f)$ for every $g \in G$,
i.e. $\langle (g; e, f), a\rangle$ is nilpotent for all
    $g \in G$ and there is an edge in ${\mathcal
    N}_S$ between $a$ and each element in $J \backslash ( G_{e,f} \cup \{ \theta \})$ .
    Furthermore,
     $$\Phi: S\longrightarrow
    (I\times \Lambda )^{0}$$
    with $$ a\mapsto \left\{ \begin{array}{ll}
                               (e(a),f(a)) & \mbox{if} ~a\in F_{J}\cup (J\backslash \{ \theta \} )\\
                               \theta & \mbox{otherwise}
                            \end{array} \right.$$
    is a semigroup  epimorphism from $S$ to the rectangular band $(I \times \Lambda)^0$. We
    call $\Phi$ a \B\ homomorphism. Also we have $a(J
    \backslash \{\theta \}) \subseteq G_{e(a), \ast}$ and $(J
    \backslash \{\theta \}) a \subseteq G_{\ast, f(a)}$ for $a \in F_J \cup (J \backslash \{\theta \})$.
    \item \label{complete3} $S$ is the $\Phi (S)$-band union of the  semigroups
     $\Phi^{-1}(\Phi(a))$
     with $a \in S$, that is $S =
    \bigcup_{a \in S} \Phi^{-1}(\Phi(a))$ and $\Phi^{-1}(\Phi(a)) \;
    \Phi^{-1}(\Phi(b)) \subseteq \Phi^{-1}(\Phi(ab))$. The sets $\Phi^{-1}(\Phi(a))$ will be  called the \B\ classes of
    $S$.
    \item \label{complete4} If $a, b \in F_J \cup (J \backslash \{\theta \})$ and $\Phi(a) \neq \Phi(b)$,
    then there is an edge in ${\mathcal N}_{S}$ between
    $a$ and $b$.
    \item \label{complete5} $(S \backslash F_J) / J$ is an ideal of $S / J$ and $J$ is an ideal of $F_J
    \cup J$.
\end{enumerate}
\end{lem}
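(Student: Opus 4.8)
Throughout write the non-zero elements of $J$ as $(g;i,\lambda)$, and recall from Lemma~\ref{RN Rees semigroups} that, since $J$ is not nilpotent, $G$ is nilpotent, $\abs{I}+\abs{\Lambda}>2$ and every entry of $P$ is non-zero; in particular $J\backslash\{\theta\}$ is a subsemigroup, and two of its elements are joined by an edge in $\mathcal N_S$ precisely when they lie in different boxes $G_{i,\lambda}$. The plan is to extract the ``action data'' of an element on $J$ from the Rees structure alone, and to invoke the \B\ condition only where it is genuinely needed. As a first, purely computational step I would show, using that all $p_{\lambda i}\neq\theta$, that for $s\in S\backslash J$ whether $s(g;i,\lambda)\neq\theta$ depends only on the row $i$, whether $(g;i,\lambda)s\neq\theta$ depends only on the column $\lambda$, that left multiplication by $s$ preserves the column coordinate and right multiplication preserves the row coordinate (multiply by the idempotent $(p_{\lambda i}^{-1};i,\lambda)$, a one-sided identity for $(g;i,\lambda)$), and that $\theta$ is a zero of $S$. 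Call a row \emph{left-good} and a column \emph{right-good} accordingly.

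For Part~(1) the key point is that associativity alone upgrades the ``exists'' in the definition of $F_J$ to ``for all''. If some row $i$ is left-good, then for any column $\lambda'$ and any $j=(g;i,\lambda)$, $j'=(h;i',\lambda')$ one has $j'(sj)\neq\theta$ (all entries of $P$ are non-zero), whence $(j's)j\neq\theta$ and so $j's\neq\theta$; thus every column is right-good, and symmetrically every row is left-good. Hence an $s\in S\backslash J$ either annihilates $J$ (so $s\in F'_J$) or satisfies the stated strong condition, and $j'sj\neq\theta$ for all $j,j'$ is then automatic. Closure of $F_J\cup(J\backslash\{\theta\})$ is checked on products: $sj,js\in J\backslash\{\theta\}$, and for $s,t\in F_J$ the product $st$ is again left-good, so $st\in F_J$ or $st\in J\backslash\{\theta\}$. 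Finally, since every entry of $P$ is non-zero the only element of $J$ annihilating $J$ is $\theta$; as the annihilation property of $x\in F'_J$ is inherited by $sx$ and $xs$, both lie in $F'_J$, so $F'_J$ is an ideal. None of this uses the \B\ hypothesis.

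Part~(2) is the heart of the argument and the main obstacle. By the first step, for $a\in F_J\cup(J\backslash\{\theta\})$ left multiplication sends a row $i$ to a row $e_a(i)$ (preserving columns) and right multiplication sends a column $\lambda$ to a column $f_a(\lambda)$ (preserving rows); the assertions $a(J\backslash\{\theta\})\subseteq G_{e(a),\ast}$ and $(J\backslash\{\theta\})a\subseteq G_{\ast,f(a)}$ say exactly that $e_a$ and $f_a$ are \emph{constant}, which is immediate for $a=(h;i,\lambda)\in J$ with $(e(a),f(a))=(i,\lambda)$. To prove constancy in general I would argue by contradiction: if $e_a(i_1)\neq e_a(i_2)$, then, using elements of $J$ in rows $i_1,i_2$ together with the nilpotence of $G$, one manufactures from $a$ a sequence $\lambda_\bullet,\rho_\bullet$ that returns to its start (Lemma~\ref{finite-nilpotent}) whose terminal pair avoids a suitable ideal $I$; the \B\ condition then forces an edge between two elements lying in a single box of $J$, contradicting the within-box nilpotence of Lemma~\ref{RN Rees semigroups} (or producing a non-nilpotent copy of $G$, contradicting Lemma~\ref{RN Groups}). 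Granting constancy, set $(e(a),f(a))$ and define $\Phi$ as stated; the identities $e_{ab}=e_a$ and $f_{ab}=f_b$ give $\Phi(ab)=(e(a),f(b))=\Phi(a)\Phi(b)$ in the rectangular band $(I\times\Lambda)^0$, while $\Phi$ sends the ideal $F'_J$ to $\theta$, so $\Phi$ is a well-defined epimorphism. The no-edge/edge dichotomy characterising $(e(a),f(a))$ is read off: $\langle a,(g;e(a),f(a))\rangle$ stays inside a single box and so is nilpotent, whereas the same loop construction produces an edge between $a$ and $(g;i,\lambda)$ whenever $(i,\lambda)\neq(e(a),f(a))$, giving both existence and uniqueness.

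Parts~(3)--(5) are then formal. Part~(3) is the standard fact that a surjective homomorphism onto a band yields a band decomposition: $\Phi^{-1}(\Phi(a))\,\Phi^{-1}(\Phi(b))\subseteq\Phi^{-1}(\Phi(ab))$ is immediate from $\Phi(xy)=\Phi(x)\Phi(y)$. For Part~(4), if $\Phi(a)\neq\Phi(b)$ then, say, $e(a)\neq e(b)$, so for any $j\in J\backslash\{\theta\}$ the elements $aj,bj$ lie in different rows, hence different boxes, and the loop/\B\ argument of Part~(2) transfers the resulting edge back to an edge between $a$ and $b$. For Part~(5), Part~(1) gives that $F'_J$ is an ideal of $S$, hence so is $F'_J\cup J=S\backslash F_J$; since $J\subseteq S\backslash F_J$, the Rees quotient $(S\backslash F_J)/J$ is an ideal of $S/J$. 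Moreover $F_J\cup J=(F_J\cup(J\backslash\{\theta\}))\cup\{\theta\}$ is a subsemigroup by Part~(1) (products involving $\theta$ land in the ideal $J$), and it contains $J$; as $J$ is an ideal of $S$, it is a fortiori an ideal of the subsemigroup $F_J\cup J$.
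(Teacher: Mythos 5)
Your parts (1), (3) and (5) are correct and essentially the paper's arguments (your remark that the only element of $J$ annihilating $J$ is $\theta$ even streamlines the proof that $F'_J$ is an ideal). The genuine gap is in part (2), which you rightly call the heart of the lemma but leave as a sketch that does not work in the order you propose. You want to prove \emph{first} that the row map $e_a$ and column map $f_a$ are constant, by ``manufacturing'' from a hypothetical $e_a(i_1)\neq e_a(i_2)$ a returning Mal'cev sequence whose \B -forced edge joins two elements of a single box. But you never exhibit the sequence, and there is no obvious candidate: the elements $a j_1\in G_{e_a(i_1),\lambda}$ and $a j_2\in G_{e_a(i_2),\lambda}$ lie in \emph{different} boxes, so an edge between them contradicts nothing; whereas if some pair $(\lambda_i,\rho_i)$ lies in a single box $G_{k,\mu}$, then (since left multiplication preserves columns and right multiplication preserves rows) the whole tail of both sequences stays in $G_{k,\mu}$, and the hypothesis $e_a(i_1)\neq e_a(i_2)$ gives you no handle for keeping $\lambda_n\neq\rho_n$ up to a repetition. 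So the contradiction you are aiming for is not available by this route.

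The paper's proof runs in the opposite logical order and uses three steps your sketch omits. First, for a \emph{fixed} $(g;i,\lambda)$ it compares $a$ with $a^2$ via the loop $\lambda_n(a,a^2,(g;i,\lambda),1,\dots,1)$: were the target boxes of $a$ and $a^2$ different, the \B\ condition would force an edge between $a$ and $a^2$, impossible since $\langle a\rangle$ is cyclic; this produces an idempotent box $G_{i',\lambda'}$ with $aG_{i',\lambda'}\cup G_{i',\lambda'}a\subseteq G_{i',\lambda'}$. Second --- and this is not a loop argument at all --- $G_{i',\lambda'}$ is then a nilpotent ideal of $\langle a\rangle\cup G_{i',\lambda'}$, so Lemma~\ref{empty ideal} yields the ``no edge'' half of the dichotomy; your proposal never invokes Lemma~\ref{empty ideal} and offers no substitute for it. Third, the loop $\lambda_n(a,(s;\alpha,\beta),(1;i',\lambda'),1,\dots,1)$ yields the ``edge'' half for every box other than $G_{i',\lambda'}$. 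Only now does constancy of $e_a$ and $f_a$ (equivalently $a(J\setminus\{\theta\})\subseteq G_{e(a),\ast}$ and $(J\setminus\{\theta\})a\subseteq G_{\ast,f(a)}$) follow, by applying the uniqueness of the edge-free box to varying $(g;i,\lambda)$. In short: dichotomy first, constancy as a corollary --- the reverse of your plan. Relatedly, in part (4) ``the loop argument transfers the edge back'' should be made explicit: one writes down $\lambda_n(a,b,(1;\Phi(b)),1,\dots,1)$, checks the two sequences alternate between distinct boxes, and reads the edge between $a$ and $b$ off the \B\ condition at index $0$.
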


\begin{proof}
(1) Because $J$ is \B, Lemma~\ref{RN Rees semigroups} yields that
all entries of $P$ are non-zero. Assume $a \in F_J \cup (J
\backslash \{\theta \})$. Then there exists $(g;i,\lambda) \in J$
such that $a(g;i,\lambda) \neq \theta $ or $(g;i,\lambda)a \neq
\theta $. Since all entries of $P$ are non-zero, we get that $aj,
ja, jaj'$ are non-zero for all $j, j' \in J\backslash \{ \theta \}
$. Then $$F_J = \{s \in S \backslash J\mid sj \neq \theta ,\;
js\neq \theta ,\;  j'sj \neq \theta  \text{ for all } j, j' \in J
\backslash \{\theta \}\}.$$ Hence $F_J \cup (J \backslash \{\theta
\})$ is a subsemigroup of $S$.

Now let $a \in F'_J(S)$. Suppose that there exist $b \in S$ such that $ab \not \in F'_J(S)$. Thus $ab \in F_J \cup (J \backslash \{\theta\})$.
Since $F_J \cup (J \backslash \{\theta\})$ is a subsemigroup of $S$ and $J\backslash \{\theta\} \neq \emptyset$, there exists
$j \in J \backslash \{\theta\}$ such that $abj \neq \theta$. Therefore, $a\in F_J \cup (J \backslash \{\theta\})$ and we get a contradiction.
Hence $F'_J(S)$ is a right ideal of $S$. Similarly we can prove that $F'_J(S)$ is a left ideal. Thus $F'_J(S)$ is an ideal of $S$.

(2) Let $a \in F_J \cup (J \backslash \{\theta \})$. The group $G$
is isomorphic with a subsemigroup of $S$. Hence $G$ is \B\ too.
Hence, by Lemma~\ref{RN Groups}, $G$ is nilpotent. Because, by
assumption $J$ is not nilpotent and $G$ is nilpotent, we have
$\abs{I}> 1$ or $\abs{\Lambda}> 1$. We suppose that $\abs{I}> 1$.

Let $(g;i,j)\in J$ with $g\in G$. Since $J$ is an ideal of $S$,
then there exist $i', i'' \in I$, $\lambda', \lambda'' \in
\Lambda$ such that $a(g;i,\lambda) =(g';i',\lambda),
a^2(g;i,\lambda) =(g'';i'',\lambda), (g;i,\lambda)a
=(k;i,\lambda')$ and $(g;i,\lambda)a^2 =(k';i,\lambda'')$ for
some $g',g'',k, k' \in G$. Therefore, for every $1 \leq n$,
$$(\lambda_{2n}(a, a^2, (g;i,\lambda), 1, \ldots, 1),\rho_{2n}(a, a^2,
(g;i,\lambda), 1, \ldots, 1)) = $$ $$((m_{2n};i',\lambda'),
(m'_{2n};i'',\lambda'')),$$
$$(\lambda_{2n-1}(a, a^2, (g;i,\lambda), 1, \ldots,
1),\rho_{2n-1}(a, a^2, (g;i,\lambda), 1, \ldots, 1)) =$$
$$((m_{2n-1};i',\lambda''), (m'_{2n-1};i'',\lambda')),$$ for some
$m_i,m'_i \in G$. If $(i', \lambda') \neq (i'', \lambda'')$ (in
particular $a \neq a^2$) then
$$\lambda_{n'}(a, a^2, (g;i,\lambda), 1, \ldots, 1) \neq \rho_{n'}(a,
a^2, (g;i,\lambda), 1, \ldots, 1),$$ for every $n' \geq 1$. As $S$
is finite, there exist distinct positive integers $t$ and $r$ such
that
$$(\lambda_{t}(a, a^2, (g;i,\lambda), 1, \ldots, 1),\rho_{t}(a, a^2,
(g;i,\lambda), 1, \ldots, 1)) = $$
$$(\lambda_{r}(a, a^2, (g;i,\lambda), 1, \ldots, 1),\rho_{r}(a, a^2,
(g;i,\lambda), 1, \ldots, 1)).$$ Because  $S$ is \B, we thus
obtain an edge in ${\mathcal N}_S$ between $a$ and $a^2$ , a
contradiction. Therefore $i'=i''$ and $\lambda'=\lambda''$.

Let $r \in G$. Then there exist $w, z, r', r'' \in G$ such that
$$a (r; i',\lambda')= a (g';i',\lambda)(r';i,\lambda')=
aa(g;i,\lambda)(r';i,\lambda')=$$ $$
(g'';i',\lambda)(r';i,\lambda')=(w;i',\lambda'),$$
$$(r;i',\lambda')a= (r'';i',\lambda)(k;i,\lambda')a=
(r'';i',\lambda)(g;i,\lambda)aa =$$ $$
(r'';i',\lambda)(k';i,\lambda')=(z;i',\lambda').$$ Hence
$G_{i',\lambda'}$ is an ideal of the semigroup $\langle a \rangle
\cup G_{i',\lambda'}$. Since $G \cong G_{i',\lambda'}$ is
nilpotent, Lemma~\ref{empty ideal} yields that there is no edge in
${\mathcal N}_S$ between $a$ and any element of $G_{i',\lambda'}$.

Now let $(s;\alpha,\beta)\in J, (\alpha,\beta) \neq
(i',\lambda')$. We have, for every $1 \leq n$,
$$(\lambda_{2n}(a, (s;\alpha,\beta),(1;i',\lambda'), 1, \ldots, 1),
\rho_{2n}(a, (s;\alpha,\beta),(1;i',\lambda'), 1, \ldots, 1)) = $$
$$ ((v_{2n};i',\lambda'), (v'_{2n};\alpha, \beta)),$$
$$(\lambda_{2n-1}(a, (s;\alpha,\beta),(1;i',\lambda'), 1, \ldots,
1),\rho_{2n-1}(a, (s;\alpha,\beta),(1;i',\lambda'), 1, \ldots, 1))
=$$
$$((v_{2n-1};i',\beta), (v'_{2n-1};\alpha, \lambda')),$$ for some
$v_i, v'_i \in G$. Because  $(\alpha,\beta) \neq (i',\lambda')$,
we thus have that
$$\lambda_{n'}(a, (s;\alpha,\beta),(1;i',\lambda'), 1, \ldots, 1)
\neq \rho_{n'}(a, (s;\alpha,\beta),(1;i',\lambda'), 1, \ldots,
1),$$ for every $n' \geq 1$. In a similar way  as above for $a$
and $a^2$, there is an edge in ${\mathcal N}_S$ between
$(s;\alpha,\beta)$ and $a$ for every $s \in G$. Therefore, between
$a$ and all elements of $J \backslash (G_{i',\lambda'} \cup
\{\theta\})$ there are edges in ${\mathcal N}_S$ and
$(i',\lambda')$ is the unique element of $I \times \Lambda$ such
that there is no edge in ${\mathcal N}_S$ between $a$ and all
elements of $G_{i',\lambda'}$ .

So we have shown that if $a\in F_J$ and
$a(g;i,\lambda)=(g';i',\lambda), (g;i,\lambda)a=(k';i,\lambda')$
then $\langle a, (g;i',\lambda')\rangle$ is nilpotent for any $g
\in G$ and $\langle a, (g;\alpha,\beta)\rangle$ is not nilpotent
for any $g \in G$ and $(\alpha,\beta) \neq (i',\lambda')$. Since
$\theta \neq a(J\backslash \{\theta\})$ and $\theta\neq
(J\backslash \{\theta\})a$, it follows that $a(J\backslash
\{\theta\}) \subseteq G_{i',\ast}$ and $(J\backslash \{\theta\})a
\subseteq G_{\ast,\lambda'}$. This fact will be used twice in the
proof.

Note that if $(i, \lambda) \neq (i',\lambda')$ then it is readily
verified that $\langle (g;i,\lambda), (g';i',\lambda')\rangle$ is
not nilpotent. So there is an edge in ${\mathcal N}_{S}$ between
$(g;i,\lambda)$ and $(g';i',\lambda')$. On the other hand, each
$\langle (g;i,\lambda), (g';i,\lambda)\rangle$ is nilpotent (as a
subsemigroup of $G_{i,\lambda} \cong G$).

Because of the above, we
now can define a function
$$\Phi: S \rightarrow (I \times \Lambda)^0$$
as follows. For $a \in S \backslash F'_J, \Phi(a)= (i',\lambda')$
where $(i', \lambda')\in I \times \Lambda$ is such that
$a(J\backslash \{\theta\}) \subseteq G_{i',\ast}$ and
$(J\backslash \{\theta\})a \subseteq G_{\ast,\lambda'}$. If $a \in
F'_J$ then we define $\Phi(a)=\theta$.

Consider $I \times \Lambda$ as a rectangular band (for the natural
multiplication). Hence $(I \times \Lambda)^0$ is a band with a
zero $\theta$.

We claim that $\Phi$ is a semigroup homomorphism.

So let $x, y \in S$. If $x \in F'_J$ or $y \in F'_J$, then $xy \in
F'_J$ (as by part (1), $F'_J$ is an ideal of $S$); hence $\Phi(xy)=
\Phi(x)\Phi(y)=\theta$.

Assume now that $x,y \in S \backslash F'_J$. Then there exist
unique $(i_1, \lambda_1),(i_2, \lambda_2) \in I \times
\Lambda$, such that $x(J\backslash \{\theta\}) \subseteq
G_{i_1,\ast}, (J\backslash \{\theta\})x \subseteq
G_{\ast,\lambda_1}, y(J\backslash \{\theta\}) \subseteq
G_{i_2,\ast}$ and $(J\backslash \{\theta\})y \subseteq
G_{\ast,\lambda_2}$. Consequently, $xy(J\backslash \{\theta\})
\subseteq xG_{i_2,\ast} \subseteq G_{i_1,\ast}$ and $(J\backslash
\{\theta\})xy \subseteq G_{\ast,\lambda_1}y \subseteq G_{\ast,
\lambda_2}$. Hence,  by the fact mentioned above, $\Phi(xy) =
(i_1, \lambda_2)= (i_1,\lambda_1)(i_2,\lambda_2) =
\Phi(x)\Phi(y)$. So, indeed, $\Phi$ is a semigroup homomorphism.

(3) Because each element of $(I \times \Lambda)^0$ is idempotent,
one has that $\Phi^{-1}(\Phi(a))$ is a subsemigroup of $S$ for
each $a \in S$. As $\Phi$ is surjective, we get that $S =
\bigcup_{x \in \Phi(S)} \Phi^{-1}(x)$, a disjoint union and
$\Phi^{-1}(x) \Phi^{-1}(y) \subseteq \Phi^{-1}(xy).$ Hence
statement (3) follows.

(4) Assume $a, b \in F_J \cup (J \backslash \{\theta\})$ are such
that $(i, \lambda) = \Phi(a) \neq \Phi(b) = (i',\lambda')$. We
have, for every $ 1 \leq n$,
$$(\lambda_{2n}(a, b,(1;i',\lambda'), 1, \ldots, 1),
\rho_{2n}(a, b,(1;i',\lambda'), 1, \ldots, 1)) $$ $$=
((m_{2n};i,\lambda),(m'_{2n};i',\lambda'),$$
$$(\lambda_{2n-1}(a, b,(1;i',\lambda'), 1, \ldots,
1),\rho_{2n-1}(a, b,(1;i',\lambda'), 1, \ldots, 1)) =$$
$$((m_{2n-1};i,\lambda'), (m'_{2n-1};i',\lambda)),$$
for some $m_k, m'_k \in G$. Because $(i, \lambda) \neq (i',
\lambda')$,
$$\lambda_{n'}(a, b,(1;i',\lambda'), 1, \ldots, 1) \neq
\rho_{n'}(a, b,(1;i',\lambda'), 1, \ldots, 1),$$ for every $n'
\geq 1$. Since $S$ is finite, there exist distinct positive
integers $t$ and $r$ such that
$$(\lambda_{t}(a, b,(1;i',\lambda'), 1, \ldots, 1),
\rho_{t}(a, b,(1;i',\lambda'), 1, \ldots, 1)) = $$
$$(\lambda_{r}(a, b,(1;i',\lambda'), 1, \ldots, 1),
\rho_{r}(a, b,(1;i',\lambda'), 1, \ldots, 1)).$$ As $S$ is \B, we
obtain that there is an edge in ${\mathcal N}_S$ between $a$ and
$b$.

(5) By part (1), $F'_J$ is an ideal of $S$. Now since $J$ is an ideal of $S$, statement (5) is
obvious.
\end{proof}

Every finite semigroup $S$ has principal series:
$$S= S_1 \supset S_2 \supset \cdots \supset S_m \supset S_{m+1} = \emptyset .$$ That is, each
$S_i$ is an ideal of $S$ and there is no ideal of $S$ strictly
between $S_i$ and $S_{i+1}$ (for convenience we call the empty set
an ideal of $S$). Each principal factor $S_i / S_{i+1} (1 \leq i
\leq m)$ of $S$ either is completely $0$-simple, completely simple
or null. Every completely $0$-simple factor semigroup is
isomorphic with a regular Rees matrix semigroup over a finite
group $G$.

Suppose $S$ is \B. Then, by Lemma~\ref{RN Rees semigroups}, every
principal factor which is a regular Rees matrix semigroup is
nilpotent or all entries of the respective sandwich matrix are
non-zero, that is, it is a union of groups. If $S_m$ and all $S_i
/ S_{i+1}$ are nilpotent, then by Lemma~\ref{empty}, $S$ is
nilpotent as well.

\begin{defn}
If $S_i / S_{i+1}$ is not nilpotent (thus $S_i \backslash S_{i+1}$
is a semigroup) and there is no edge in ${\mathcal N}_{S}$ between
any element of $S_i \backslash S_{i+1}$ and any element of
$S_{i+1}$ then we say that $S_i \backslash S_{i+1}$ is a root of
$S$.
\end{defn}

In  case $S_i \backslash S_{i+1}$ is a root of $S$, then it
follows from Lemma~\ref{complete}.(\ref{complete2}) that if $i' >
i$ and $S_{i} \backslash S_{i+1} \subseteq F_{S_{i'} / S_{i'+1}}(S
/ S_{i'+1})$ then $S_{i'} / S_{i'+1}$ is nilpotent.

Note that there exist \B\ finite semigroups with more than one
root. An example is the semigroup $T=\{a, b, c, d, e\}$ with
multiplication table $$
\begin{tabular}{ c|c c c c c}
      & $a$ & $b$ & $c$ & $d$ & $e$\\ \hline
  $a$ & $a$ & $a$ & $a$ & $a$ & $a$\\
  $b$ & $a$ & $b$ & $a$ & $a$ & $b$\\
  $c$ & $a$ & $a$ & $c$ & $d$ & $a$\\
  $d$ & $a$ & $a$ & $c$ & $d$ & $a$\\
  $e$ & $a$ & $e$ & $a$ & $a$ & $e$\\
\end{tabular}$$
The subsemigroups $\{b, e\}$ and $\{c, d\}$ are roots of $T$.

It is now convenient to identify the non-zero elements of $S / I$
with those of $S \backslash I$ for $I$ an ideal of $S$. Let $j < i\leq m$ and let $B = F'_{S_i /
S_{i+1}}(S/S_{i+1}) \cap (S_j \backslash
S_{j+1})$. We claim that $B \cup
S_{j+1}$ is an ideal of $S$.

Indeed, let $x\in B \cup S_{j+1}$ and let $a \in S$.
We need to show that $ax \in B\cup S_{j+1}$ and $xa\in B\cup S_{j+1}$. We only prove the former, the other one is shown similar. We give a proof by contradiction. So suppose
$ax \not\in B\cup S_{j+1}$.
 Since $x \in S_j$ and $S_j$ is an ideal of $S$, we have $ax \in S_j \backslash S_{j+1}$.
 Furthermore, $ax \not \in B$ and $ax\not\in S_{i}\backslash S_{i+1}$ imply  $ax \in F_{S_i /
S_{i+1}}(S/S_{i+1})$ and thus there exists an element $y$ in $S_i \backslash S_{i+1}$ such that $axy \not \in S_{i+1}$ or $yax \not \in S_{i+1}$.
Clearly  $ya \in S_i$. Now since $x$ in $B$, $x \in F'_{S_i /S_{i+1}}(S/S_{i+1})$ and thus $xy$ and $yax$ are in $S_{i+1}$. However, this is in
contradiction with $axy \not \in S_{i+1}$ or $yax \not \in S_{i+1}$. This proves that indeed $ax\in B\cup S_{j+1}$.

Now, since $S_{j+1} \subseteq B \cup S_{j+1} \subseteq S_j$ and because there is no ideal strictly between $S_{j+1}$ and $S_{j}$, we get $B= \emptyset$ or $B= S_j \backslash S_{j+1}$. It therefore follows that
if $B \neq \emptyset$ then $S_j \backslash S_{j+1} \subseteq
F'_{S_i / S_{i+1}}(S / S_{i+1})$. On the other hand if $B =
\emptyset$ then $S_j \backslash S_{j+1} \subseteq F_{S_i /
S_{i+1}}(S / S_{i+1})$.

For $1\leq i \leq m$ let
$$ i^{*}=\{ j\leq i \mid S_{j}\backslash S_{j+1} \subseteq F_{S_{i}/S_{i+1}}(S/S_{i+1}) \cup S_{i}\backslash S_{i+1}\}$$
and
$$S^{(i)}  = \bigcup_{ j \in i^{\star}} (S_j \backslash S_{j+1}).$$
Since $F_{S_{i}/S_{i+1}}(S/S_{i+1}) \subseteq S\backslash S_{i}$, $S\backslash S_{i} = \bigcup_{1\leq k < i} (S_k \backslash S_{k+1})$ and the fact mentioned above we have
$$S^{(i)} =F_{S_i / S_{i+1}}(S / S_{i+1}) \cup S_i \backslash S_{i+1}.$$

\begin{defn}
If $S_i \backslash S_{i+1}$ is a root then  the set $S^{(i)}$ is
called the stem of $S_{i}\backslash S_{i+1}$. In this case,
because of Lemma~\ref{complete}, if $S_{j}\backslash
S_{j+1}\subseteq S^{(i)}$ then there is a path in
$\mathcal{N}_{S}$ between any two elements $s,t\in S^{(i)}$.
\end{defn}

Note that for every set $T = S_i \backslash S_{i+1}$ we have
three possible
cases: (i) $T$ is a root, (ii)  $T$ is not a root
and there exists a non-nilpotent  semigroup $S_j \backslash S_{j+1}$
such that $T \subseteq F_{S_j / S_{j+1}}(S / S_{j+1})$ (so in this
case there is an edge in $\mathcal{N}_{S}$ between some
element of $T$ and some element of $S_{j}\backslash S_{j+1}$),
(iii) $T$ is not a root and if $T \subseteq F_{S_j / S_{j+1}}(S /
S_{j+1})$ then $S_j / S_{j+1}$ is nilpotent. The third case will
be given a special name.

\begin{defn} We say that $S_{i}\backslash S_{i+1}$ is an isolated
subset provided that $S_i \backslash S_{i+1}$ is not a root that
satisfies the property that if $S_i \backslash S_{i+1} \subseteq
F_{S_j / S_{j+1}}(S / S_{j+1})$ then $S_j / S_{j+1}$ is nilpotent.
\end{defn}

\begin{defn} \label{def-connection}
Suppose $S^{(i_1)}$ and $S^{(i_2)}$ are two distinct stems of $S$.
If $S^{(i_1)} \cap S^{(i_2)} \neq \emptyset$ then $ S_j \backslash
S_{j+1}\subseteq S^{(i_1)} \cap S^{(i_2)}$ for some $j$. We call
$S_j \backslash S_{j+1}$ a connection between the stems
$S^{(i_1)}$ and $S^{(i_2)}$.
\end{defn}
The reason for this name is clear as in the upper non-nilpotent
graph ${\mathcal N}_S$ there is a path from any element of
$S^{(i_1)}$ to any element of $S^{(i_2)}$ via a vertex in $S_j
\backslash S_{j+1}$. For example, in the semigroup $T=\{\theta,
a,b,c,d,f \}$ with multiplication table
$$
\begin{tabular}{ c|c c c c c c}
           & $\theta$ & $a$      & $b$      & $c$      & $d$      & $f$\\ \hline
  $\theta$ & $\theta$ & $\theta$ & $\theta$ & $\theta$ & $\theta$ & $\theta$\\
  $a$      & $\theta$ & $a$      & $b$      & $\theta$ & $\theta$ & $b$\\
  $b$      & $\theta$ & $a$      & $b$      & $\theta$ & $\theta$ & $b$\\
  $c$      & $\theta$ & $\theta$ & $\theta$ & $c$      & $d$      & $d$\\
  $d$      & $\theta$ & $\theta$ & $\theta$ & $c$      & $d$      & $d$\\
  $f$      & $\theta$ & $a$      & $b$      & $c$      & $d$      & $f$\\
\end{tabular}$$
We prove that $T$ is \B\ by contradiction. So, suppose the contrary. Hence,
there exist  distinct elements $x,y\in T$, elements $w_{1},\ldots , w_{m}\in T^{1}$, an ideal $I$ of $\langle x,y,w_{1},\ldots ,
w_{m}\rangle$
such that $$\lambda_{t}(x, y, w_{1},
w_{2}, \ldots, w_{t}) = \lambda_{m}(x, y, w_{1},
w_{2}, \ldots, w_{m}),$$ $$\rho_{t}(x, y, w_{1}, w_{2}, \ldots,
w_{t}) = \rho_{m}(x, y, w_{1}, w_{2}, \ldots,
w_{m}),$$ $\lambda_m \neq \rho_m$, $\lambda_m, \rho_m\not\in I$  and  $\langle \lambda_i, \rho_i \rangle$ is nilpotent in $\langle x,y,w_{1},\ldots , w_{m}\rangle/I$ for some $0 \leq i \leq m$. Because the subsemigroups $\{\theta, a, b\}$ and
$\{\theta, c,d\}$ are ideals of $S$ and $\{\theta, a, b\}\{\theta, c,d\}=\{\theta, c,d\}\{\theta, a, b\}=\{\theta\}$, the equalities imply that neither $\lambda_i \in \{\theta, a, b\},\,  \rho_i \in \{\theta, c,d\}$ nor $\lambda_i \in \{\theta, c, d\},\,  \rho_i \in \{\theta, a,b\}$. Since $\langle \lambda_i, \rho_i \rangle$ is nilpotent in $\langle x,y,w_{1},\ldots , w_{m}\rangle/ I$, either $\{\lambda_i, \rho_i\}=\{b,f\}$ or $\{\lambda_i, \rho_i\}=\{d,f\}$.
Suppose that $\{\lambda_i, \rho_i\}=\{b,f\}$. Since $\theta \not \in \{\lambda_{i+1}, \rho_{i+1}\}$, $w_{i+1} \in \{a,b,f,1\}$. Then $\lambda_{i+1}= \rho_{i+1}=b$, a contradiction. Similarly one obtains  a contradiction for $\{\lambda_i, \rho_i\}=\{d,f\}$. So, indeed, $T$ is \B.

Note that the set $\{f\}$ is a connection between the stems $\{a,b,f\}$ and $\{c,d,f\}$.

Let $S$ be a \B\ finite semigroup with principal series
$$S= S_1 \supset S_2 \supset \ldots \supset S_m \supset S_{m+1} = \emptyset
.$$ If $S_{i}/S_{i+1}$ is not nilpotent then
$(S_{i}/S_{i+1})^{0}=\mathcal{M}^{0}(G,I,\Lambda;P)$ (with all
entries of $P$ non-zero) and we denote by $\Phi_{i}:
(S/S_{i+1})^{0} \rightarrow (I\times \Lambda )^{0}$ the \B\
homomorphism obtained in Lemma~\ref{complete}.

\begin{thm}\label{principal series}
Let $S$ be a \B\ finite semigroup with principal series
$$S= S_1 \supset S_2 \supset \ldots \supset S_m \supset S_{m+1} = \emptyset.$$ The following properties hold.

\begin{enumerate}
    \item \label{ps1} The union $K$ of all isolated subsets $S_i
    \backslash S_{i+1}$ is  the largest nilpotent ideal of $S$  and it is the set of all isolated vertices in
    ${\mathcal N}_{S}$ (possibly $K=\emptyset$).
    \item \label{ps2} If $S_{i_1} \backslash S_{i_1+1} \subseteq S^{(i_2)}$ and $S_{i_2} \backslash S_{i_2+1} \subseteq
    S^{(i_3)}$, then $S_{i_1} \backslash S_{i_1+1} \subseteq
    S^{(i_3)}$. If $S_{i_2} \backslash S_{i_2+1} = \mathcal{M}(G_2,I_2,\Lambda_2;P_2)$ and $S_{i_3}
    \backslash S_{i_3+1} = \mathcal{M}(G_3,I_3,\Lambda_3;P_3)$ are not nilpotent
    and $\Phi_{2}: (S / S_{i_2+1})^{0} \longrightarrow
    (I_2\times \Lambda_2)^{0}$, $\Phi_{3}: (S / S_{i_3+1})^{0} \longrightarrow
    (I_3\times \Lambda_3)^{0}$ are \B\ homomorphisms, then $\Phi_{3}(a)=\Phi_{3}((g;$ $\Phi_{2}(a)))$, for every $a \in
    S_{i_1} \backslash S_{i_1+1}$ and $g\in G_2$.
    \item \label{ps3} If $S_i \backslash S_{i+1}$ is not an isolated subset and not
    a root, then there exists a root $S_j \backslash S_{j+1}$
    such that $i < j$ and $S_i \backslash S_{i+1} \subseteq S^{(j)}$.
    \item \label{ps4} $S \backslash K = \bigcup S^{(i)}$, where the union
    runs over all $i$ with $S_i \backslash S_{i+1}$ a root.
    \item \label{ps5} Every stem $S^{(i)}$ is a subsemigroup.
    \item \label{ps6} $(S_i \backslash S_{i+1})(S_j \backslash S_{j+1}) \subseteq
    K$ if and only if $S_i \backslash S_{i+1}$ and $S_j \backslash S_{j+1}$
    are not in a same stem.
\end{enumerate}
\end{thm}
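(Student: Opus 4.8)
The plan is to extract one \emph{key lemma} that powers all six parts:
\emph{if $\langle a,b\rangle$ is not nilpotent, then there is a non-nilpotent principal factor $S_l\backslash S_{l+1}$ with $\Phi_l(a)\neq\theta$ and $\Phi_l(b)\neq\theta$.} To prove it, I would apply Lemma~\ref{finite-nilpotent} to the finite semigroup $\langle a,b\rangle$, obtaining distinct $x,y\in\langle a,b\rangle$ and $w_1,\dots,w_m\in\langle a,b\rangle^1$ with $x=\lambda_m(x,y,w_1,\dots,w_m)$ and $y=\rho_m(x,y,w_1,\dots,w_m)$. Take $l$ smallest with $S_l\cap\{x,y\}\neq\emptyset$ and pass to the Rees quotient $S/S_{l+1}$, which is again \B. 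Both $x,y$ must survive in $S_l\backslash S_{l+1}$: if, say, $\bar y=\theta$, then $\bar\lambda_1=\bar x\,\bar w_1\,\theta=\theta$ propagates to force $\bar x=\bar\lambda_m=\theta$, contradicting $x\notin S_{l+1}$. Now $(\lambda_0,\rho_0)=(\bar x,\bar y)=(\lambda_m,\rho_m)$ with $\bar x\neq\bar y$, so the \B\ property of $S/S_{l+1}$ applied with the empty ideal yields an edge between $\bar x$ and $\bar y$; hence $\langle\bar x,\bar y\rangle\subseteq S_l/S_{l+1}$ is not nilpotent, the factor is non-nilpotent, and $\Phi_l$ exists. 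Finally $x,y$ are nonempty words in $a,b$ with $\Phi_l(x),\Phi_l(y)\neq\theta$; since $\Phi_l$ is a homomorphism into the band $(I_l\times\Lambda_l)^0$, every letter occurring in them has nonzero image, so if $\Phi_l(a)=\theta$ then $a$ is absent and $x,y\in\langle b\rangle$, making $\langle x,y\rangle$ cyclic and nilpotent --- impossible. Thus $\Phi_l(a)\neq\theta$, and symmetrically $\Phi_l(b)\neq\theta$.

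Set $K=\{a\in S\mid \Phi_j(a)=\theta \text{ for every non-nilpotent factor } S_j\backslash S_{j+1}\}$. I would first clear the two purely structural parts. For (\ref{ps5}): a root has $S_i/S_{i+1}$ non-nilpotent and $S^{(i)}=F_{S_i/S_{i+1}}(S/S_{i+1})\cup(S_i\backslash S_{i+1})$, so Lemma~\ref{complete}.(\ref{complete1}) already gives that this set is a subsemigroup, products being unable to fall into $S_{i+1}$ since all entries of $P$ are nonzero. For (\ref{ps2}): writing $\Phi_2(a)=(e,f)$, the product $a(1;e,f)$ equals some $(g_0;e,f)\in S_{i_2}\backslash S_{i_2+1}\subseteq S^{(i_3)}$ by Lemma~\ref{complete}.(\ref{complete2}), so $\Phi_3((g_0;e,f))\neq\theta$; all of $G_{e,f}$ carries a single $\Phi_3$-value by Lemma~\ref{complete}.(\ref{complete4}), and rectangular-band arithmetic on $\Phi_3(a)\Phi_3((1;e,f))=\Phi_3((g_0;e,f))$ (together with the symmetric computation for $(1;e,f)a$) forces $\Phi_3((g;e,f))=\Phi_3(a)$. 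Since this value is nonzero, $a\in S^{(i_3)}$, whence $S_{i_1}\backslash S_{i_1+1}\subseteq S^{(i_3)}$ by the all-or-nothing dichotomy established before the theorem, proving the transitivity clause as well.

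For (\ref{ps1}) the key lemma does the work. Its contrapositive shows every non-isolated vertex lies outside $K$, while Lemma~\ref{complete}.(\ref{complete2}) gives the converse, since $\Phi_l(a)\neq\theta$ joins $a$ to every element of the factor outside its own $G_{e,f}$, a nonempty set because $\abs{I_l}+\abs{\Lambda_l}>2$. Hence $K$ is precisely the set of isolated vertices, and $T\subseteq K$ holds exactly when $T$ is an isolated subset: the dichotomy $T\subseteq F_{S_j/S_{j+1}}$ or $T\subseteq F'_{S_j/S_{j+1}}$ identifies membership in $K$ with case~(iii), and the key lemma rules out non-nilpotent isolated subsets. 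As each $\Phi_j$ is a homomorphism with absorbing $\theta$, $K$ is an ideal; as $\mathcal{N}_K=\emptyset$, Lemma~\ref{empty graph} makes $K$ nilpotent. For maximality, given a nilpotent ideal $L$ and some $a\in L\backslash K$, choose $l$ with $\Phi_l(a)=(e,f)\neq\theta$; multiplying $a$ by suitable elements of $S_l\backslash S_{l+1}$ produces, using $\abs{I_l}+\abs{\Lambda_l}>2$, two elements of $L$ in distinct, hence $\mathcal{N}_S$-adjacent, positions of the factor, so $\mathcal{N}_L\neq\emptyset$, contradicting nilpotency of $L$.

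The remaining parts follow. For (\ref{ps3}): a factor that is neither isolated nor a root is case~(ii) of the trichotomy, and the key lemma shows any non-nilpotent non-root factor is again case~(ii) --- its downward edge yields a strictly lower non-nilpotent factor containing it in the relevant $F$-set; iterating strictly increases the index, so by finiteness one reaches a root $S_j\backslash S_{j+1}$ with $i<j$, and transitivity from (\ref{ps2}) gives $S_i\backslash S_{i+1}\subseteq S^{(j)}$. Part (\ref{ps4}) is then immediate: $S^{(l)}\subseteq S\backslash K$ for a root $l$ since its elements have $\Phi_l\neq\theta$, and each $a\notin K$ lies in a non-isolated factor which is a root or, by (\ref{ps3}), inside some $S^{(l)}$. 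Part (\ref{ps6}) is $\Phi$-bookkeeping: if the two factors share a stem $S^{(l)}$ then $\Phi_l(ab)=\Phi_l(a)\Phi_l(b)$ is a product of two nonzero band elements, hence nonzero, so the product avoids $K$; conversely if the product meets $S\backslash K$ then some $\Phi_l(ab)\neq\theta$ forces $\Phi_l(a),\Phi_l(b)\neq\theta$, and after replacing a non-root $l$ by a genuine root via (\ref{ps3}) and transitivity, both factors sit in one stem. \textbf{The main obstacle is the key lemma}, in particular the descent to $S/S_{l+1}$ and the proof that the witness pair survives inside a single non-nilpotent factor; once that is in place, everything reduces to homomorphism arithmetic on the maps $\Phi_i$ supplied by Lemma~\ref{complete}.
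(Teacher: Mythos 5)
Your architecture is genuinely different from the paper's and, for the most part, sound. The paper proves the six parts by direct element-chasing: part (\ref{ps1}) in particular is handled by a lengthy maximality/descent argument (choose the largest $i$ with $S_i\backslash S_{i+1}\subseteq K$ containing a non-isolated vertex, push the witness pair down through the ideal chain, contradict maximality), and the edge-to-stem statement you take as your ``key lemma'' appears in the paper only \emph{after} the theorem, as Corollary~\ref{edge stem}, deduced \emph{from} it. You invert this order: you prove the key lemma directly (correctly --- locating the witness pair $x,y$ of Lemma~\ref{finite-nilpotent} in a single principal factor, invoking the \B\ property with $t=0$ and the ideal $\langle x,y,w_1,\dots,w_m\rangle\cap S_{l+1}$, and then reading off $\Phi_l(a),\Phi_l(b)\neq\theta$ from the band homomorphism), and you redefine $K$ as the common zero-set of all the $\Phi_j$. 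This buys you a one-line proof that $K$ is an ideal and reduces parts (\ref{ps4}) and (\ref{ps6}) to band arithmetic, which is cleaner than the paper's case analyses. Two wording slips you should fix but which are not substantive: ``$l$ smallest with $S_l\cap\{x,y\}\neq\emptyset$'' should be the index with $x\in S_l\backslash S_{l+1}$, and the claim that $y$ lies in the same layer follows from $x\in S^1yS^1$ and $y\in S^1xS^1$ (the paper's own argument in Lemma~\ref{empty}), not only from the $\theta$-propagation you describe.

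There is, however, one genuine gap: your proof of the transitivity clause of part (\ref{ps2}) runs entirely through the maps $\Phi_2$ and $\Phi_3$, which exist only when $S_{i_2}/S_{i_2+1}$ and $S_{i_3}/S_{i_3+1}$ are \emph{non-nilpotent} regular Rees factors (Lemma~\ref{complete} is stated under that hypothesis). The first sentence of (\ref{ps2}) is unconditional, and $S^{(i)}$ is defined via the $F$-sets for every factor, nilpotent or not, so the cases with a nilpotent intermediate or target factor are non-vacuous and your argument does not reach them. The paper closes exactly this case with a short $\Phi$-free computation: from $a\in F_{S_{i_2}/S_{i_2+1}}$ pick $b$ with $ab$ or $ba$ in $S_{i_2}\backslash S_{i_2+1}$, then pick $c$ with $(ab)c$, $c(ab)$, etc.\ in $S_{i_3}\backslash S_{i_3+1}$, and regroup the product to exhibit $a\in F_{S_{i_3}/S_{i_3+1}}$. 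You need to add that argument (or restrict the statement); note that your own uses of transitivity in parts (\ref{ps3}) and (\ref{ps6}) only ever involve non-nilpotent factors, so the rest of your proof is unaffected.
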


\begin{proof}
(1) First suppose that $S$ does not have any isolated subset. Then
for every principal factor $S_i / S_{i+1}$, the subset $S_i
\backslash S_{i+1}$ is a root or there exists non-nilpotent
semigroup $S_j \backslash S_{j+1}$ such that $S_i \backslash
S_{i+1} \subseteq F_{S_j / S_{j+1}}(S / S_{j+1})$. Because of
Lemma~\ref{complete}.(\ref{complete2}), in both cases, all
elements of $S_i \backslash S_{i+1}$ are non-isolated vertices in
${\mathcal N}_{S}$. Therefore ${\mathcal N}_{S}$ has no isolated
vertex. Now suppose that $I$ is a nilpotent ideal of $S$. Then
${\mathcal N}_{I}$ is empty and by Lemma~\ref{empty ideal} the
elements of $I$ are isolated vertices in ${\mathcal N}_{S}$.
Hence, by the above, $I$ is empty.

Now assume $S$ has an isolated subset, i.e. we suppose that $K
\neq \emptyset$. Suppose that $a \in K, b \in S$ and $ab \notin
K$. Let $i$ and $j$ be such that $a \in S_i\backslash S_{i+1}$ and
$ab \in S_j\backslash S_{j+1}$. Because each $S_k$ is an ideal of
$S$, it is clear that $i < j$. Since $ab \notin K$ we have that
$S_j\backslash S_{j+1}$ is not an isolated subset. So either (i)
$S_j\backslash S_{j+1}$ is a root, or (ii) $S_j\backslash S_{j+1}$
is not a root and $S_j\backslash S_{j+1} \subseteq F_{S_k/
S_{k+1}}(S/ S_{k+1})$ for some $k$ with $j < k$  and $S_k/
S_{k+1}$ is not nilpotent.

We first show that case (i) leads to a
contradiction. So assume $S_j \backslash S_{j+1}$ is a root. In
particular, $S_j/ S_{j+1}$ is not nilpotent and
$\abs{S_j\backslash S_{j+1}} > 1$. Thus there exists $x \in
S_j\backslash S_{j+1}$ such that $ x \neq ab$. Because $a\in
S_{i}\backslash S_{i+1}$ and since $S_{i}\backslash S_{i+1}$ is an isolated
subset we get that $a \in F'_{S_j/ S_{j+1}}(S/ S_{j+1})$. Hence
$xa \in S_{j+1}$ and thus also $xab \in S_{j+1}$. On the other
hand, by Lemma~\ref{RN Rees semigroups}, $S_j\backslash S_{j+1}$
is a semigroup. But then $x, ab \in S_j \backslash S_{j+1}$ implies that $xab \in
S_j \backslash S_{j+1}$. This contradicts with $xab\in S_{j+1}$
and $S_{j}\backslash S_{j+1}$  being a root.

Next we show that case (ii) also leads to a contradiction. So
suppose that $S_j \backslash S_{j+1}$ is not a root and that there
exists a positive integer $k$ such that $ab \in F_{S_k/
S_{k+1}}(S/ S_{k+1})$, $j <  k$  and $S_k/ S_{k+1}$ is not
nilpotent. Choose $y \in S_k \backslash S_{k+1}$. Since
$S_{i}\backslash S_{i+1}$ is an isolated subset and because $a\in
S_{i}\backslash S_{i+1}$ we get that $a\in  F'_{S_k/ S_{k+1}}(S/
S_{k+1})$. Hence, $ya\in S_{k+1}$. As $S_{k+1}$ is an ideal of
$S$, we thus obtain that $yab\in S_{k+1}$. So, by
Lemma~\ref{complete}.(\ref{complete1}), $ab \not\in F_{S_k/
S_{k+1}}(S/ S_{k+1})$, a contradiction.

We thus have shown that indeed $K$ is a right ideal of $S$.
Similarly one shows that it is a left ideal and thus it is an
ideal.

We now prove that all elements of $K$ are isolated vertices. So
suppose the contrary and let $i$ be the largest positive integer
such that $S_i \backslash S_{i+1} \subseteq K$ and $S_i \backslash
S_{i+1}$ contains a non-isolated vertex, say $v$. Then there
exists an element $w \in S$ such that $\langle v, w \rangle$
is not nilpotent. Lemma~\ref{finite-nilpotent} implies that there
exists a positive integer $m'$, distinct elements $x, y\in \langle
v, w \rangle$ and elements $w_{1}, w_{2}, \ldots, w_{m'}\in
\langle v, w \rangle^{1}$, such that $x = \lambda_{m'}(x, y,
w_{1}, w_{2}, \ldots, w_{m'})$, $y = \rho_{m'}(x, y, w_{1}, w_{2},
\ldots, w_{m'})$. As $S$ is \B, we get that $\langle x, y\rangle$
is not nilpotent. Hence, since $\langle w \rangle$ is nilpotent,
we get that $\{x, y\} \not \subseteq \langle w \rangle$. As $S_i$
is an ideal of $S$ and $v \in S_i$, we clearly have $\langle w,
v\rangle \backslash \langle w \rangle \subseteq S_i$. Therefore
$\{x, y\} \cap S_i \neq \emptyset$. Again because $S_i$ is an
ideal of $S$ and since $m'\geq 1$, the
equalities
$$x = \lambda_{m'}(x, y,w_1,w_2, \ldots,w_{m'}),\; y =\rho_{m'}(x, y,w_1,w_2,
\ldots,w_{m'}),$$  imply that $x, y \in S_i$. Because $K$ is an
ideal we obtain in a similar manner that $x, y \subseteq K$. By
the maximality choice of $i$ we have that $x,y \in S_i\backslash
S_{i+1}$. Since $S$ is \B, the above equalities yield that
there is an edge between $x$ and $y$ in ${\mathcal N}_{S/S_{i+1}}$. So, by
Lemma~\ref{RN Rees semigroups}, $S_{i}\backslash S_{i+1}$ is a
non-nilpotent semigroup.

Since $S_i\backslash S_{i+1}$ is an isolated subset and
$S_i\backslash S_{i+1}$ is not nilpotent, it follows from the
definition of root, that there exist $i' > i, a \in S_i\backslash
S_{i+1}$ and $b \in S_{i'}\backslash S_{i'+1}$ such that there is
an edge in ${\mathcal N}_S$ between $a$ and $b$. Again with a
similar proof as above, there exist elements $a', b'$ in $\langle
a, b \rangle \cap S_{i'}$ such that $\langle a', b'\rangle$ is not
nilpotent and $a', b' \in K$. Let $i''>i$ be such that  $a' \in
S_{i''}\backslash S_{i''+1}$. As $K$ is a union of isolated
subsets, it follows that $S_{i''} \backslash S_{i''+1} \subseteq
K$ and $S_{i''}\backslash S_{i''+1}$ contains a non-isolated
vertex. This contradicts with the maximality of $i$. Hence we have
shown that indeed all elements of $K$ are isolated vertices.

We now show that if $S_i \backslash S_{i+1}$ is not an isolated
subset then it does not contain any isolated vertex; and hence $K$
is indeed the set of all isolated vertices. So suppose $S_i
\backslash S_{i+1}$ is not an isolated subset. Then either it is a
root or $S_i\backslash S_{i+1} \subseteq F_{S_j/ S_{j+1}}(S/
S_{j+1})$ for some $j > i$ with $S_j/ S_{j+1}$ not nilpotent. In
the former case, Lemma~\ref{RN Rees semigroups} yields that the upper non-nilpotent graph of
$S_{i}\backslash S_{i+1}$ is non-empty, connected and regular.
Hence $S_{i} \backslash S_{i+1}$ does not have any isolated
vertices in ${\mathcal N}_S$. In the second case, again by
Lemma~\ref{RN Rees semigroups}, there exists a $j > i$ such that
$S_j \backslash S_{j+1} = \mathcal{M}(G, I, \Lambda; P)$, $G$ a
nilpotent group, $\abs{I}+\abs{\Lambda} \geq 3$, all entries of
$P$ are non-zero and $S_i \backslash S_{i+1} \subseteq F_{S_j /
S_{j+1}}(S / S_{j+1})$. Again ${\mathcal N}_{S_{j} \backslash
S_{j+1}}$ is a non-empty connected and regular graph. By
Lemma~\ref{complete}, there exists a \B\ homomorphism $\Phi$ from
$(S /S_{j+1})^{0}$ to the rectangular band $(I \times
\Lambda)^{0}$. Furthermore, there is an edge in ${\mathcal N}_S$
between $a \in S_{i} \backslash S_{i+1}$ and any element in
$(S_{j} \backslash S_{j+1})\backslash G_{\Phi(a)}$. Hence, $S_{i}
\backslash S_{i+1}$ does not have any isolated vertices. So,
indeed, $K$ is the set of all isolated vertices.

As ${\mathcal N}_K$ is empty and $K$ is \B, by Lemma~\ref{empty graph}, the semigroup $K$ is
nilpotent. It remains to show that $K$ is the largest nilpotent
ideal of $S$.  To do so, let $a\in K'\backslash K$ with $K'$ an
ideal that is nilpotent. 
Then there exists $b\in S$ with $\langle a,b\rangle$ not
nilpotent. Since $S$ is \B\ and $\langle a,b\rangle \subseteq
\langle b \rangle \cup K'$, it follows with an argument as above
that there exist $e,f\in K'$ with $\langle e,f\rangle$ not
nilpotent. However, this contradicts with $K'$ being nilpotent.
So, indeed $K$ is the largest nilpotent ideal of $S$.

(2) If $i_1 = i_2$ or $i_2 = i_3$, then the statement is obvious.
Assume $i_1 \neq i_2$ and $i_2 \neq i_3$. Then the sets
$F_{S_{i_2} / S_{i_2+1}}(S / S_{i_2+1}) \cap (S_{i_1} \backslash
S_{i_1+1})$ and $F_{S_{i_3} / S_{i_3+1}}(S / S_{i_3+1})$ $\cap
(S_{i_2} \backslash S_{i_2+1})$ are not empty. Hence we get that
$S_{i_1} \backslash S_{i_1+1} \subseteq F_{S_{i_2} / S_{i_2+1}}(S
/ S_{i_2+1})$ and $S_{i_2} \backslash S_{i_2+1} \subseteq
F_{S_{i_3} / S_{i_3+1}}(S / S_{i_3+1})$. It follows that for $a\in
S_{i_{1}}\backslash S_{i_{1}+1}$ there exists $b\in
S_{i_{2}}\backslash S_{i_{2}+1}$ such that $ab \in S_{i_2} \backslash S_{i_2+1}$ or $ba \in S_{i_2} \backslash S_{i_2+1}$. Suppose that $ab \in S_{i_2} \backslash S_{i_2+1}$. Then $ab \in F_{S_{i_3} / S_{i_3+1}}(S / S_{i_3+1})$ and thus there exist $c\in
S_{i_{3}}\backslash S_{i_{3}+1}$ such that $abc \in S_{i_3} \backslash S_{i_3+1}$ or $cab \in S_{i_3} \backslash S_{i_3+1}$. If $abc \in S_{i_3} \backslash S_{i_3+1}$, then $bc \in S_{i_3} \backslash S_{i_3+1}$ and thus $a \in F_{S_{i_3} /S_{i_3+1}}(S / S_{i_3+1})$. Also if $cab \in S_{i_3} \backslash S_{i_3+1}$, then $ca \in S_{i_3} \backslash S_{i_3+1}$ and thus $a \in F_{S_{i_3} /S_{i_3+1}}(S / S_{i_3+1})$. Similarly if $ba \in S_{i_2} \backslash S_{i_2+1}$, then $a \in F_{S_{i_3} /
S_{i_3+1}}(S / S_{i_3+1})$. Consequently $S_{i_1} \backslash
S_{i_1+1} \subseteq S^{(i_3)}$.

Now assume $S_{i_2} \backslash S_{i_2+1} =
\mathcal{M}(G_2,I_2,\Lambda_2;P_2)$ and $S_{i_3} \backslash
S_{i_3+1} = \mathcal{M}(G_3,I_3,\Lambda_3;$ $P_3)$ are semigroups
that are not nilpotent. By Lemma~\ref{RN Rees semigroups},
$\abs{I_2}+\abs{\Lambda_2}>2, \abs{I_3}+\abs{\Lambda_3}>2$, $G_2$
and $G_3$ are nilpotent groups and all entries of both $P_2$ and
$P_3$ are non-zero. Because of
Lemma~\ref{complete}.(\ref{complete2}) there exist the \B\
homomorphisms $\Phi_{2}: (S/S_{i_{2}+1})^{0} \rightarrow
(I_{2}\times \Lambda_{2})^{0}$ and $\Phi_{3}: (S/S_{i_{3}+1})^{0}
\rightarrow (I_{3}\times \Lambda_{3})^{0}$.

For every $a \in S_{i_1} \backslash S_{i_1+1}$ and $g\in G_2, k,
l\in G_3$, by Lemma~\ref{complete}.(\ref{complete2}), there exist
elements $k',k'',l',l''\in G_3, g', g'' \in G_2$ such that
\begin{eqnarray*}
(k;\Phi_3((g;\Phi_2(a))))  &=& (g;\Phi_2(a)) (k';\Phi_3((g;\Phi_2(a)))),\\
a (g;\Phi_2(a)) &=& (g';\Phi_2(a)),\\
(g';\Phi_2(a)) (k';\Phi_3((g';\Phi_2(a)))) &=& (k'';\Phi_3((g';\Phi_2(a)))),\\
(l;\Phi_3((g;\Phi_2(a))))  &=&  (l';\Phi_3((g;\Phi_2(a)))) (g;\Phi_2(a)),\\
(g;\Phi_2(a)) a &=& (g'';\Phi_2(a)),\\
(l';\Phi_3((g'';\Phi_2(a)))) (g'';\Phi_2(a)) &=&
(l'';\Phi_3((g'';\Phi_2(a)))).
\end{eqnarray*}
As $G_2$ is nilpotent, there is no edge in ${\mathcal N}_S$
between $(g;\Phi_2(a))$, $(g';\Phi_2(a))$ and $(g'';\Phi_2(a))$.
Lemma~\ref{complete}.(\ref{complete4}) implies that
$\Phi_3((g;\Phi_2(a)))= \Phi_3((g';\Phi_2(a))) =
\Phi_3((g'';\Phi_2(a)))$. Therefore we have
\begin{eqnarray*}
a (k;\Phi_3((g;\Phi_2(a)))) &=& a (g;\Phi_2(a))
(k';\Phi_3((g;\Phi_2(a))))\\
 &=& (g';\Phi_2(a))
(k';\Phi_3((g;\Phi_2(a)))) \\
&=& (k'';\Phi_3((g;\Phi_2(a)))),\\
(l;\Phi_3((g;\Phi_2(a)))) a &=&
(l';\Phi_3((g;\Phi_2(a)))) (g;\Phi_2(a)) a\\
 &=&
(l';\Phi_3((g;\Phi_2(a)))) (g'';\Phi_2(a))\\
 &=& (l'';\Phi_3((g;\Phi_2(a)))).
\end{eqnarray*}
It implies that $\Phi_3(a)=\Phi_3((g;\Phi_2(a)))$.

(3) Since $S$ is finite and because  $S_i \backslash S_{i+1}$ is not an
isolated subset and not a root, there exists a positive integer
$i'$ such that $S_i \backslash S_{i+1} \subseteq F_{S_{i'}
/ S_{i'+1}}(S / S_{i'+1})$,  $S_{i'} / S_{i'+1}$ is not nilpotent
and if  $S_i \backslash S_{i+1} \subseteq
F_{S_{i''} / S_{i''+1}}(S / S_{i''+1})$ for some $i'' > i'$, then $S_{i''} /
S_{i''+1}$ is nilpotent. Clearly $S_i \backslash S_{i+1}
\subseteq S^{(i')}$.

If $S_{i'} \backslash S_{i'+1}$ is a root, then the statement
obviously holds. Otherwise, as $S_{i'} \backslash S_{i'+1}$ is not
nilpotent (and thus its vertices in ${\mathcal N}_S$ are not
isolated), we obtain from part (1) that $S_{i'}\backslash
S_{i'+1}$ is not an isolated subset. Hence there exists a positive
integer $j > i'$ such that $S_{i'} \backslash S_{i'+1} \subseteq
S^{(j)}$ and $S_{j} \backslash S_{j+1}$ is not nilpotent. By part
(2), $S_i \backslash S_{i+1} \subseteq S^{(j)}$. However, this
contradicts with the condition on $i'$.

(4) Let $s \in S \backslash K$ and let $i$ be such that $s\in S_i
\backslash S_{i+1}$. In particular $S_i \backslash S_{i+1}$ is not
an isolated subset. If $S_i \backslash S_{i+1}$ is a root then $s
\in S^{(i)}$. If $S_i \backslash S_{i+1}$ is not a root, then, by
part (3), $s \in S^{(k)}$ for some $k > i$ and $S_k \backslash
S_{k+1}$ is a root. It easily can be verified that any element of
a non-isolated subset is not an isolated vertex in ${\mathcal
N}_S$. Hence the statement follows.

(5) Suppose $S^{(i)}$ is a stem. As $S_i \backslash S_{i+1}$ is a
root, it is not nilpotent and by Lemma~\ref{RN Rees semigroups},
$S_i \backslash S_{i+1} = \mathcal{M}(G, I, \Lambda; P)$ a
regular Rees matrix semigroup with $\abs{I}+\abs{\Lambda}>2$,
$G$ a nilpotent group and all entries of $P$ non-zero and there
exists a \B\ homomorphism $\Phi: (S/S_{i+1})^{0} \rightarrow
(I\times \Lambda)^0$. Now suppose that $a,b \in S^{(i)}$. Hence
$\Phi(a)$ and $\Phi(b)$ are non-zero in $(I\times \Lambda )^{0}$
and thus $\Phi(ab)=\Phi (a)\, \Phi (b)$ is also non-zero. So $ab
\in S^{(i)}$.

(6) Suppose $a \in S_i \backslash S_{i+1}, b \in S_j \backslash
S_{j+1}$, $ab \notin K$ and $S_i \backslash S_{i+1}$ and $S_j
\backslash S_{j+1}$ are not in a same stem. By part (4), there
exists some $k$ such that $ab \in S^{(k)}$ and $S_k \backslash
S_{k+1}$ is a root. Hence $ab \notin F'_{S_{k} / S_{k+1}}(S /
S_{k+1})$. As by Lemma~\ref{complete}.(\ref{complete1}) $F'_{S_{k} / S_{k+1}}(S / S_{k+1})$ is an ideal of
$S / S_{k+1}$, it follows that $a, b \notin F'_{S_{k} / S_{k+1}}(S
/ S_{k+1})$. Hence, $a, b \in F_{S_{k} / S_{k+1}}(S / S_{k+1})$.
But this contradicts with the assumption that $a$ and $b$ do not
belong to a same stem. This proves one implication of (6). The
converse easily can be verified.
\end{proof}

We now give several consequences of  Theorem~\ref{principal
series}. First we extend Lemma~\ref{RN Groups} as follows.

\begin{cor}
A finite monoid $S$ is \B\ if and only if it is nilpotent.
\end{cor}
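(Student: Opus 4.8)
The plan is to prove that a finite monoid $S$ is pseudo-nilpotent if and only if it is nilpotent. The forward implication (nilpotent $\Rightarrow$ pseudo-nilpotent) is already recorded in the excerpt as one of the basic implications, so the work is entirely in the converse. I would assume $S$ is a pseudo-nilpotent finite monoid with identity $1_S$, and aim to reduce to the group case handled in Lemma~\ref{RN Groups}, or more efficiently to exploit the presence of the identity together with the structural description in Theorem~\ref{principal series}.

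The cleanest approach I would take is to show directly that a pseudo-nilpotent finite monoid cannot have a non-nilpotent principal factor, so that by Lemma~\ref{empty} (applied up a principal series) the monoid is nilpotent. Suppose for contradiction that some principal factor $S_i/S_{i+1}$ is not nilpotent. By Lemma~\ref{RN Rees semigroups} it is then a regular Rees matrix semigroup $\mathcal{M}^0(G,I,\Lambda;P)$ with all entries of $P$ non-zero and $\abs{I}+\abs{\Lambda}>2$. The key observation is that the identity $1_S$ of the monoid forces local non-nilpotence involving $1_S$: if $a$ is any element of the non-nilpotent subset $S_i\backslash S_{i+1}$, then the identity $1_S$ plugged into the sequences $\lambda_n,\rho_n$ acts trivially, and a pigeonhole argument on the finitely many pairs $(\lambda_t,\rho_t)$ produces indices $t<r$ with $(\lambda_t,\rho_t)=(\lambda_r,\rho_r)$, exactly as in the proof of Lemma~\ref{RN Groups}. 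This is the mechanism by which the presence of $1_S$ converts the pseudo-nilpotent condition into an edge in $\mathcal{N}_S$ incident to $1_S$, which contradicts the fact that $1_S$ generates a nilpotent (indeed trivial-by-cyclic) subsemigroup with anything.

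Concretely, I would mimic the argument of Lemma~\ref{RN Groups} but inside the monoid: take distinct elements witnessing non-nilpotence of the factor, insert $1_S$ as the first $z$-entry, and use that $\abs{S\times S}$ is finite to find a repeated pair along the $\lambda,\rho$ sequences. Then apply the pseudo-nilpotent definition with the appropriate ideal $I$ (the factor $S_{i+1}$, or the empty ideal) to conclude there must be an edge between $1_S$ and some element, or between two elements whose product structure forces $1_S$ to be adjacent to a vertex in $\mathcal{N}_S$. Since $1_S$ commutes in the trivial way and $\langle 1_S, a\rangle$ is just $\langle a\rangle$ adjoined with the identity, which is nilpotent, $1_S$ is an isolated vertex, giving the contradiction.

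The main obstacle will be the bookkeeping that turns the repeated-pair conclusion into an edge \emph{at $1_S$} rather than merely an edge somewhere in the factor. In Lemma~\ref{RN Groups} this worked because swapping the roles of $b$ and $1_G$ in the two sequences preserved the equality and the identity could be treated as one of the two generating vertices; I would need the analogous swap to be legitimate in the monoid setting, which requires checking that replacing the pair $(a,b,1_S,\dots)$ by $(a,1_S,b,\dots)$ keeps the relevant $\lambda,\rho$ values equal and keeps $\lambda_m,\rho_m$ outside the chosen ideal. Once that manipulation is justified, the pseudo-nilpotent hypothesis delivers an edge between $1_S$ and $a$, contradicting that $\langle 1_S,a\rangle=\langle a\rangle^1$ is nilpotent, and the corollary follows.
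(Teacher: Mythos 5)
Your proposal is correct in substance but takes a genuinely different, more hands-on route than the paper. The paper's proof is essentially two lines given the machinery already built: by Theorem~\ref{principal series}(\ref{ps1}) the set of isolated vertices of ${\mathcal N}_S$ is the largest nilpotent ideal $K$; since $\langle 1_S,a\rangle=\langle a\rangle^{1}$ is commutative and hence nilpotent for every $a$, the identity is an isolated vertex, so $1_S\in K$; and an ideal containing the identity is the whole monoid, so $S=K$ is nilpotent. You instead bypass Theorem~\ref{principal series} entirely and argue at the level of principal factors: a non-nilpotent factor is, by Lemma~\ref{RN Rees semigroups}, a completely $0$-simple semigroup with all sandwich entries non-zero, and you run the ``insert $1_S$, pigeonhole, then swap $(a,b,1_S,\ldots)$ to $(a,1_S,b,\ldots)$'' device of Lemma~\ref{RN Groups} to manufacture a forbidden edge at $1_S$, after which Lemma~\ref{empty} climbs the principal series. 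This buys self-containedness (only the early lemmas are needed) at the cost of redoing a computation the paper has already packaged. The one point you should make explicit is that the pseudo-nilpotent hypothesis only fires when the repeated pair satisfies $\lambda_t\neq\rho_t$: for an arbitrary pair of elements witnessing non-nilpotence this can fail after $1_S$ is inserted, so you must choose $a=(g;i,\lambda)$ and $b=(g';i',\lambda')$ with $(i,\lambda)\neq(i',\lambda')$ and track the index pairs (exactly as in the proof of Lemma~\ref{complete}(\ref{complete4})) to see that $\lambda_n\neq\rho_n$ for all $n$; with that choice the products also stay outside $S_{i+1}$ because all entries of $P$ are non-zero, and your argument closes.
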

\begin{proof} Suppose $S$ is a \B\ finite monoid.
From Theorem~\ref{principal series} we know that the set
consisting of the isolated vertices is the largest nilpotent ideal
$K$ of $S$. Clearly $1\in K$. Hence, $S=K$ is nilpotent. The
result follows.
\end{proof}

Note that in general $(S_i \backslash S_{i+1})(S_j \backslash
S_{j+1}) \nsubseteq K$ does not imply that if $S_i \backslash
S_{i+1}$ is contained in a stem $S^{(h)}$ then $S_j \backslash
S_{j+1} \subseteq S^{(h)}$. However, we can prove the following.

\begin{cor}\label{edge stem}
Let $S$ be a \B\ finite semigroup and $a,b \in S$. If there exists
an edge in ${\mathcal N}_S$ between $a$ and $b$, then there exists
a stem $S^{(i)}$ such that $a, b \in S^{(i)}$.
\end{cor}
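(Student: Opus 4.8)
The plan is to argue by contraposition: assuming that no stem of $S$ contains both $a$ and $b$, I will show that $\langle a,b\rangle$ is nilpotent, so that there is no edge between $a$ and $b$ in $\mathcal{N}_S$. First I would record that, since an edge means $\langle a,b\rangle$ is not nilpotent, neither $a$ nor $b$ can be an isolated vertex; hence by Theorem~\ref{principal series}.(\ref{ps1}) both lie outside $K$, and by Theorem~\ref{principal series}.(\ref{ps4}) each of them lies in some stem. Writing $a\in S_p\setminus S_{p+1}$ and $b\in S_q\setminus S_{q+1}$, the case $p=q$ is immediate: the whole factor $S_p\setminus S_{p+1}$, being non-isolated, sits inside a single stem (it is a root, or else is placed in a stem by Theorem~\ref{principal series}.(\ref{ps3})), which then contains both $a$ and $b$. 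So the interesting case is $p\neq q$.

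For $p\neq q$ the key observation is that ``no stem contains both $a$ and $b$'' is equivalent to ``the factors $S_p\setminus S_{p+1}$ and $S_q\setminus S_{q+1}$ are not in a same stem'', because every stem is a union of whole principal factors and so a factor is either entirely inside or entirely outside a given stem. Then Theorem~\ref{principal series}.(\ref{ps6}) yields $(S_p\setminus S_{p+1})(S_q\setminus S_{q+1})\subseteq K$ and $(S_q\setminus S_{q+1})(S_p\setminus S_{p+1})\subseteq K$; in particular $ab\in K$ and $ba\in K$.

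From here I would exploit that every element of $T=\langle a,b\rangle$ is a word in $a$ and $b$, and that any word using both letters contains a consecutive factor $ab$ or $ba$, hence lies in $K$ since $K$ is an ideal. Setting $I=T\cap K$, which is a nilpotent ideal of $T$, the quotient $T/I$ therefore keeps as non-zero elements only the images of the pure powers $a^i$ and $b^j$, while $\bar a\,\bar b=\bar b\,\bar a=\bar 0$. This forces $T/I$ to be commutative, and a commutative semigroup is nilpotent (of class $1$, since $\lambda_1=xz_1y=yz_1x=\rho_1$). As $T$ is \B\ (a subsemigroup of $S$) and both $I$ and $T/I$ are nilpotent, Lemma~\ref{empty} makes $T=\langle a,b\rangle$ nilpotent, the desired contradiction.

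The main obstacle I expect is organizational rather than computational: correctly translating the element-level statement of the corollary into the factor-level hypothesis of Theorem~\ref{principal series}.(\ref{ps6}) (via the fact that stems are unions of whole factors), and verifying the word-combinatorial claim that a mixed word necessarily exhibits a consecutive $ab$ or $ba$, so that it is absorbed into $K$. Once $ab,ba\in K$ is in hand, the reduction to a commutative quotient and the single invocation of Lemma~\ref{empty} for $I$ inside $T$ are routine.
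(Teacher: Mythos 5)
Your proof is correct. The first half coincides with the paper's own argument: both locate $a$ and $b$ outside $K$ via Theorem~\ref{principal series}.(\ref{ps1}) and then reduce, through Theorem~\ref{principal series}.(\ref{ps6}), to the case $ab,ba\in K$ (you phrase this contrapositively, via the observation that stems are unions of whole principal factors, but it is the same use of the same statement). Where you genuinely diverge is in disposing of that remaining case. The paper re-runs the witness-chasing argument directly: Lemma~\ref{finite-nilpotent} produces distinct $x,y\in\langle a,b\rangle$ with $x=\lambda_m$, $y=\rho_m$; since $\langle a\rangle$ and $\langle b\rangle$ are nilpotent and $K$ is an ideal containing $ab$ and $ba$, it forces $x,y\in K$, and the nilpotency of $K$ then contradicts the edge that the \B\ condition demands between $x$ and $y$. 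You instead note that every word in $a,b$ using both letters contains a consecutive $ab$ or $ba$ and hence lies in $K$, so $I=\langle a,b\rangle\cap K$ is a nonempty nilpotent ideal of $T=\langle a,b\rangle$ whose Rees quotient $T/I$ is commutative (all mixed products vanish) and therefore Mal'cev nilpotent of class $1$; a single application of Lemma~\ref{empty} to the \B\ subsemigroup $T$ then yields the contradiction. The two mechanisms are close cousins, since Lemma~\ref{empty} is itself proved by exactly the witness-chasing the paper repeats here, but your packaging is somewhat cleaner: it trades the ad hoc argument for one citation of an already-established lemma, at the cost of the (easy) combinatorial observation about mixed words. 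All the supporting steps you flag as potential obstacles do in fact go through: stems are unions of whole factors by construction of $S^{(i)}$, and $T/I$ is commutative for precisely the reason you give.
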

\begin{proof}

Since $a$ and $b$ are not isolated vertices, by
Theorem~\ref{principal series}.(\ref{ps1}), both $a$ and $b$ do
not belong to $K$. Let $S_i / S_{i+1}$ and $S_j / S_{j+1}$ be
principal factors of $S$ such that $a\in S_i \backslash S_{i+1}$
and $b \in S_j \backslash S_{j+1}$. If $ab$ or $ba$ is not in $K$,
then $(S_i \backslash S_{i+1}) (S_j \backslash S_{j+1}) \nsubseteq
K$ or $(S_j \backslash S_{j+1}) (S_i \backslash S_{i+1})
\nsubseteq K$ and thus by Theorem~\ref{principal
series}.(\ref{ps6}) the statement obviously holds.

Now suppose that $ab, ba \in K$. Since $\langle a, b\rangle$ is
not nilpotent, by Lemma~\ref{finite-nilpotent}, there exists a
positive integer $m$, distinct elements $x, y\in \langle a,
b\rangle$ and $w_{1}, w_{2}, \ldots,$ $w_{m}\in \langle a,
b\rangle^{1}$ such that $x = \lambda_{m}(x, y, w_{1}, w_{2},
\ldots, w_{m})$, $y = \rho_{m}(x, y, w_{1}, w_{2}, \ldots,$ $
w_{m})$. As $S$ is \B\ and $\langle a\rangle$ and $\langle
b\rangle$ are nilpotent, we get that $\{x, y\} \nsubseteq \langle
a\rangle$ and $\{x, y\} \nsubseteq \langle b\rangle$. Since $ab,
ba \in K$ and $K$ is an ideal, it follows that $x, y \in K$. As
$K$ is nilpotent, Theorem~\ref{principal series}.(\ref{ps1}) then
implies that $\langle x, y\rangle$ is nilpotent, a contradiction.
%
\end{proof}

\begin{cor} \label{connectivity}
Let $S$ be a \B\ finite semigroup. The following properties hold.
\begin{enumerate}
    \item \label{co1} Every stem is connected and any two distinct elements of a stem are connected by a path of length at most $2$.
    \item \label{co2} If $K$ is empty, then $S= S^{(i)}$ for some root $S_i \backslash S_{i+1}$,
    ${\mathcal N}_{S}$ is connected and every two distinct vertices are connected by a path of length at most $2$.
    \item \label{co3} If $S$ does not have any connections, then every
    connected component with more than one element is a stem and it is a subsemigroup.
    \item \label{co4} The union of two stems that have a connection is a connected subset of $\mathcal{N}_{S}$. Furthermore, every shortest path
    in this union has length at most $4$.
\end{enumerate}
\end{cor}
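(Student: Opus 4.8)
The plan is to treat part~(\ref{co1}) as the engine and to derive the other three parts from it, using Theorem~\ref{principal series}, Lemma~\ref{complete} and Corollary~\ref{edge stem} as the main inputs. For part~(\ref{co1}), fix a stem $S^{(i)}$. As $S_i\backslash S_{i+1}$ is a root, Lemma~\ref{RN Rees semigroups} gives $(S_i/S_{i+1})^{0}=\mathcal{M}^{0}(G,I,\Lambda;P)$ with $\abs{I}+\abs{\Lambda}>2$, whence $\abs{I\times\Lambda}\geq 2$, and Lemma~\ref{complete} supplies the \B\ homomorphism $\Phi_i\colon (S/S_{i+1})^{0}\to (I\times\Lambda)^{0}$, which is non-zero on all of $S^{(i)}$. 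Given distinct $a,b\in S^{(i)}$: if $\Phi_i(a)\neq\Phi_i(b)$ then Lemma~\ref{complete}.(\ref{complete4}) produces an edge between $a$ and $b$ (formally an edge of $\mathcal{N}_{S/S_{i+1}}$, which lifts to $\mathcal{N}_S$ because a non-nilpotent image forces a non-nilpotent preimage), a path of length $1$. If $\Phi_i(a)=\Phi_i(b)$, I would choose $c\in S_i\backslash S_{i+1}$ with $\Phi_i(c)\neq\Phi_i(a)$, which exists since $\Phi_i$ maps $S_i\backslash S_{i+1}$ onto $I\times\Lambda$ and $\abs{I\times\Lambda}\geq 2$; then Lemma~\ref{complete}.(\ref{complete4}) makes $c$ adjacent to both $a$ and $b$, yielding the path $a-c-b$ of length $2$.

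For part~(\ref{co2}), assume $K=\emptyset$ and let $S_m$ be the minimal ideal (kernel) of $S$. Since $K$ is empty, $S_m$ is not an isolated subset; and since $F_{S_j/S_{j+1}}(S/S_{j+1})\subseteq S\backslash S_j$ while $S_m\subseteq S_j$ for every $j\leq m$, the subset $S_m$ lies in no such $F$-set, which rules out the middle possibility of the trichotomy for principal factors recorded in the text. Hence $S_m$ is a root. I would then show $S=S^{(m)}$ directly: for $s\in S\backslash S_m$ and $j\in S_m$ one has $sj\in S_m$ since $S_m$ is an ideal, so $sj\neq\theta$ in $(S/S_{m+1})^{0}$ and hence $s\in F_{S_m/S_{m+1}}(S/S_{m+1})$; thus $S=F_{S_m/S_{m+1}}(S/S_{m+1})\cup S_m=S^{(m)}$. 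The connectedness of $\mathcal{N}_S$ and the length-$2$ bound now follow from part~(\ref{co1}).

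For part~(\ref{co3}), the hypothesis of no connections forces distinct stems to be disjoint, since by Definition~\ref{def-connection} a non-empty intersection of two distinct stems would yield a connection. A connected component $C$ with $\abs{C}>1$ contains no isolated vertex, so $C\subseteq S\backslash K=\bigcup S^{(i)}$ by Theorem~\ref{principal series}.(\ref{ps4}); by Corollary~\ref{edge stem} every edge lies within a single stem, and as the stems are now disjoint, $C$ sits inside exactly one of them. Conversely each stem is connected by part~(\ref{co1}) and, containing a non-nilpotent root, has more than one element, so it is a whole connected component; hence the components with more than one element are precisely the stems, each a subsemigroup by Theorem~\ref{principal series}.(\ref{ps5}). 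For part~(\ref{co4}), I would fix an element $c$ of the connection $S_j\backslash S_{j+1}\subseteq S^{(i_1)}\cap S^{(i_2)}$; for arbitrary $a\in S^{(i_1)}$ and $b\in S^{(i_2)}$, part~(\ref{co1}) gives paths of length at most $2$ from $a$ to $c$ and from $c$ to $b$, and concatenating them gives a path of length at most $4$ inside the union, establishing both connectedness and the diameter bound.

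The main obstacle I expect is bookkeeping rather than depth: one must consistently translate between edges of $\mathcal{N}_{S/S_{i+1}}$, where Lemma~\ref{complete} operates, and edges of $\mathcal{N}_S$, and in part~(\ref{co2}) argue cleanly that the kernel absorbs every element so that its stem exhausts $S$. The sharpness of the length bounds in parts~(\ref{co1}) and~(\ref{co4}) rests entirely on the inequality $\abs{I\times\Lambda}\geq 2$, which provides the ``third coordinate'' $c$ to route a path through, and this is exactly what the root condition $\abs{I}+\abs{\Lambda}>2$ guarantees.
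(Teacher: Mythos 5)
Your proposal is correct and follows essentially the same route as the paper: part~(\ref{co1}) is proved via the \B\ homomorphism $\Phi_i$ and Lemma~\ref{complete}, and parts~(\ref{co3}) and~(\ref{co4}) are deduced from it together with Corollary~\ref{edge stem} and Theorem~\ref{principal series} exactly as in the text. The only (harmless) deviation is in part~(\ref{co2}), where you argue directly that the kernel $S_m$ is a root and that $S=F_{S_m/S_{m+1}}(S/S_{m+1})\cup S_m=S^{(m)}$, whereas the paper simply cites Theorem~\ref{principal series}.(\ref{ps4}) and~(\ref{ps6}) to get that there is a single stem covering $S$; both arguments are valid.
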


\begin{proof} We use the same notation as in Theorem~\ref{principal series}.

(1) Assume $S_i \backslash S_{i+1}$ is a root and suppose $s, t
\in S^{(i)}$, with $s\neq t$. By Lemma~\ref{RN Rees semigroups},
$S_i \backslash S_{i+1} = \mathcal{M}(G, I, \Lambda; P)$, a
regular Rees matrix semigroup,
with $\abs{I}+\abs{\Lambda}>2$, $G$ is a nilpotent group and all
entries of $P$ are non-zero. By
Lemma~\ref{complete}.(\ref{complete2}), there exists a \B\
homomorphism
$$\Phi: (S / S_{i+1})^{0}  \rightarrow (I\times \Lambda)^{0}.$$
Since $s, t \in S^{(i)}$, we have $\Phi(s)\neq \theta, \Phi(t)
\neq \theta$.

If $\Phi(s) \neq \Phi(t)$, then by
Lemma~\ref{complete}.(\ref{complete4}), there is an edge in
${\mathcal N}_S$ between $s$ and $t$. If $\Phi(s) = \Phi(t)$,
there exists $(c,d) \in I\times \Lambda$ such that $(c,d) \neq
\Phi(s)$ and $g\in G$ such that there is an edge in ${\mathcal
N}_S$ between $s$ and $(g;c,d)$ and between $t$ and $(g;c,d)$.
Hence a shortest path between $s$ and $t$ has length at most $2$.

(2) Assume $K =\emptyset$. Then, by Theorem~\ref{principal
series}.(\ref{ps4}), every element of $S$ belongs to a stem. By
Theorem~\ref{principal series}.(\ref{ps6}), we also get that $S$
has only one stem. Part (1) thus yields that $S$ is connected and
a shortest path between any two distinct elements $s,t\in S$ has
length at most $2$.

(3) Assume that $S$ does not have any connections. Suppose
$S^{(i)}$ is a stem, $s \in S^{(i)}$ and $t \notin S^{(i)}$ and
assume there is an edge in ${\mathcal N}_S$ between $s$ and $t$.
Corollary~\ref{edge stem} implies that there exists a stem
$S^{(k)}$ such that $s,t \in S^{(k)}$. Let $p$ be such that $s \in
S_p \backslash S_{p+1}$ and $S_p /S_{p+1}$ is a principal factor.
Then $S_p \backslash S_{p+1}$ is a connection between $S^{(i)}$
and $S^{(k)}$. This contradicts with the assumption that $S$ does
not have any connections. It follows that any connected component
with more than one element is contained in a stem. Because of part
(1) we actually get that such a connected component is a stem.
Furthermore, by Theorem~\ref{principal series}.(\ref{ps5}), a stem
is a subsemigroup.

(4) Suppose $S_k \backslash S_{k+1}$ is a connection between two
stems $S^{(i)}$ and  $S^{(j)}$. Let  $s, t\in S^{(i)} \cup
S^{(j)}$ and let $x \in S_k \backslash S_{k+1}$. Since $s$ and $x$ belong to the same stem, by part (1) they are connected by a path of
length at most 2. By the same reson $t$ and $x$ are connected by a path of length at most 2. Therefore the result follows.

\end{proof}


\begin{cor} Let $S$ be a \B\ finite semigroup. The following
properties hold.
\begin{enumerate}
 \item Every connected component of ${\mathcal N}_S$ that has more
than one element is a union
of some stems.
 \item If $C$ is  a connected
component of $S$ then $C\cup K$ is a semigroup.
 \item If $C_1, \ldots , C_n$  are the connected components of $S$ with more
than one element then $S/K= C_1^{\theta} \cup \cdots \cup
C_{n}^{\theta}$, a $0$-disjoint union.
\end{enumerate}
\end{cor}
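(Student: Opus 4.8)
The plan is to derive all three statements from the structural results of Theorem~\ref{principal series}, together with the connectivity of stems recorded in Corollary~\ref{connectivity}. The guiding observation is that, by Theorem~\ref{principal series}.(\ref{ps1}), $K$ is exactly the set of isolated vertices of ${\mathcal N}_S$; hence a connected component $C$ with more than one element consists entirely of non-isolated vertices and so satisfies $C\subseteq S\backslash K$. For part (1), I would then invoke Theorem~\ref{principal series}.(\ref{ps4}), which writes $S\backslash K$ as the union of the stems $S^{(i)}$ over roots $S_i\backslash S_{i+1}$, so every $s\in C$ lies in some stem $S^{(i)}$. Since each stem is connected by Corollary~\ref{connectivity}.(\ref{co1}) and $s\in S^{(i)}\cap C$, the whole connected set $S^{(i)}$ must lie inside the component $C$. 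Therefore $C$ is the union of the stems it contains, which is (1).

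For part (2), if $C=\{v\}$ is a one-element component then $v$ is isolated, so $v\in K$ and $C\cup K=K$ is a semigroup (indeed an ideal). If $\abs{C}>1$, I take $a,b\in C\cup K$ and show $ab\in C\cup K$. When $a$ or $b$ lies in $K$ this is immediate, as $K$ is an ideal. When $a,b\in C$, say $a\in S_i\backslash S_{i+1}$ and $b\in S_j\backslash S_{j+1}$, I argue by contradiction: if $ab\notin K$ then $(S_i\backslash S_{i+1})(S_j\backslash S_{j+1})\nsubseteq K$, so by Theorem~\ref{principal series}.(\ref{ps6}) these two principal factors lie in a common stem $S^{(h)}$. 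This $S^{(h)}$ is a subsemigroup by Theorem~\ref{principal series}.(\ref{ps5}), so $ab\in S^{(h)}$; and since $S^{(h)}$ is connected and meets $C$ at $a$, we get $S^{(h)}\subseteq C$ and hence $ab\in C$. Either way $ab\in C\cup K$, proving (2).

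For part (3), I would first note that the components with more than one element partition the set of non-isolated vertices, so by Theorem~\ref{principal series}.(\ref{ps1}) we have $S\backslash K=C_1\cup\cdots\cup C_n$, a disjoint union. Passing to $S/K$, where the ideal $K$ collapses to the zero $\theta$, this yields $S/K=C_1^{\theta}\cup\cdots\cup C_n^{\theta}$ with $C_i^{\theta}\cap C_j^{\theta}=\{\theta\}$ for $i\neq j$; each $C_k^{\theta}$ is a subsemigroup of $S/K$ by part (2), being the image $(C_k\cup K)/K$. The only remaining $0$-disjointness axiom is $C_i^{\theta}C_j^{\theta}=\{\theta\}$ for $i\neq j$, i.e. $ab\in K$ whenever $a\in C_i$ and $b\in C_j$. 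This is again the contradiction argument of part (2): if $ab\notin K$ then $a$ and $b$ would lie in a common stem $S^{(h)}$, which is connected, forcing $C_i=C_j$, a contradiction.

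The computations are routine once the right prior results are assembled; the single mechanism doing the work is the equivalence in Theorem~\ref{principal series}.(\ref{ps6}) combined with the connectedness of stems from Corollary~\ref{connectivity}.(\ref{co1}), which simultaneously closes $C\cup K$ under multiplication and annihilates cross-products in $S/K$. I expect the main obstacle to be purely organizational: verifying all the $0$-disjoint union axioms cleanly and confirming that the one-element components are absorbed into $K$ rather than surfacing as separate summands $C_k^{\theta}$.
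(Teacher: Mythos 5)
Your proposal is correct and follows essentially the same route as the paper, which simply notes that the corollary "follows at once from Theorem~\ref{principal series} and Corollary~\ref{connectivity}"; you have filled in the details using exactly those ingredients (parts (\ref{ps1}), (\ref{ps4}), (\ref{ps5}), (\ref{ps6}) of the theorem and part (\ref{co1}) of the corollary). The only cosmetic quibble is that your "argument by contradiction" in part (2) is really just a direct case analysis showing $ab\in K$ or $ab\in C$, as you yourself conclude.
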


\begin{proof} This follows at once from Theorem~\ref{principal series} and Corollary~\ref{connectivity}.
\end{proof}
%
%
%
So we have shown that every non-isolated connected component of a
\B\ finite semigroup $S$ is a union of stems, say $S_{1},\ldots,
S_{n}$. Hence, every $S_{i}$ has a connection with $S_{j}$ for
some $i\neq j$. However, $S_{i}$ is not necessarily connected with
every $S_{j}$.
We give an example. For this we recall from~\cite{Jes-shah} that the non-commuting  graph ${\mathcal M}_S$ of a
semigroup  $S$ is the graph whose vertices are the elements of $S$
and in which there is an edge between two distinct vertices $x$
and $y$ if these elements do not commute. By~\cite[Lemma
3.5]{Jes-shah}, if $S$ is a band, then
${\mathcal N}_S = {\mathcal M}_S$.

\begin{figure}
\hrule
\begin{picture}(00,65)(45,20)
\gasset{AHnb=0,Nw=1.25,Nh=1.25,Nframe=n,Nfill=y}
\gasset{ExtNL=y,NLdist=1.5,NLangle= 60}

  \put(-22,55){\mbox{$.............$}}
  \put(95,55){\mbox{$.............$}}

  \node(0)(28,30){$\theta$}
  \node(1)(-10,65){$a_i$}
  \node(2)(5,65){$b_i$}
  \node(3)(20,65){$c_i$}
  \node(4)(35,65){$d_i$}
  \node(5)(50,65){$c_{i+1}$}
  \node(6)(65,65){$d_{i+1}$}
  \node(7)(80,65){$c_{i+2}$}
  \node(8)(95,65){$d_{i+2}$}
  \node(9)(5,45){$f_i$}
  \node(10)(35,45){$f_{i+1}$}
  \node(11)(65,45){$f_{i+2}$}
  \node(12)(22,50){$l_{i+1}$}
  \node(13)(52,50){$l_{i+2}$}

  \drawedge(1,2){}
  \drawedge(1,9){}
  \drawedge(9,3){}
  \drawedge(3,4){}
  \drawedge(3,12){}
  \drawedge(3,10){}
  \drawedge(10,5){}
  \drawedge(5,13){}
  \drawedge(11,5){}
  \drawedge(5,6){}
  \drawedge(11,7){}
  \drawedge(7,8){}
\end{picture}
\hrule
\caption{}
\end{figure}
Let $X_0, X_1, \ldots, X_n$ be semigroups ($n\geq 1$) such that
$X_i = \{ a_i, b_i, f_i, c_i, d_i, \theta \}$, for $0 \leq i \leq
n$ and $X_{i}$ has Cayley table
$$
\begin{tabular}{ c|c c c c c c}
           & $\theta$ & $a_i$      & $b_i$      & $f_i$      & $c_i$      & $d_i$\\ \hline
$\theta$   & $\theta$ & $\theta$   & $\theta$   & $\theta$   & $\theta$   & $\theta$  \\
$a_i$      & $\theta$ & $a_i$      & $b_i$      & $b_i$      & $\theta$   & $\theta$  \\
$b_i$      & $\theta$ & $a_i$      & $b_i$      & $b_i$      & $\theta$   & $\theta$  \\
$f_i$      & $\theta$ & $a_i$      & $b_i$      & $f_i$      & $c_i$      & $d_i$\\
$c_i$      & $\theta$ & $\theta$   & $\theta$   & $d_i$      & $c_i$      & $d_i$\\
$d_i$      & $\theta$ & $\theta$   & $\theta$   & $d_i$      & $c_i$      & $d_i$\\
\end{tabular}
$$
Note that the semigroups $X_i$ are isomorphic to the semigroup $T$ given after Definition~\ref{def-connection}.
Furthermore, $X_i \cap X_{i+2}=\{\theta\}$ for $0\leq i \leq n-2$
and $a_{i+1} = c_i$, $b_{i+1}=d_i$, $X_i \cap X_{i+1}=\{a_{i+1},
b_{i+1}, \theta\}$ for $0\leq i \leq n-1.$

We now define the semigroup
 $$S=\bigcup_{0\leq i \leq n} X_i \; \bigcup \; \{ l_{1}, \ldots, l_n \}$$
(where $l_{1}, \ldots, l_{n}$ are the distinct elements not
belonging to $\bigcup_{0\leq i \leq n} X_i$) with multiplication
such that each $X_{i}$ is a subsemigroup and such that
$X_iX_{i+2}=X_{i+2}X_i=\{ \theta \} $ for $0 \leq i \leq n-2$.
Furthermore, for  $0\leq i \leq n-1$,
$$\begin{tabular}{ c|c c c c c}
          & $f_i$     & $f_{i+1}$ & $c_i$ & $d_i$ & $l_{i+1}$\\ \hline
$f_i$     & $f_i$     & $l_{i+1}$ & $c_i$ & $d_i$ & $l_{i+1}$\\
$f_{i+1}$ & $l_{i+1}$ & $f_{i+1}$ & $c_i$ & $d_i$ & $l_{i+1}$\\
$c_i$     & $d_i$     & $d_i$     & $c_i$ & $d_i$ & $d_i$ \\
$d_i$     & $d_i$     & $d_i$     & $c_i$ & $d_i$ & $d_i$ \\
$l_{i+1}$ & $l_{i+1}$ & $l_{i+1}$ & $c_i$ & $d_i$ & $l_{i+1}$ \\
\end{tabular}$$ and
$\{a_i, b_i\}f_{i+1}= f_{i+1}\{a_i, b_i\} = \{c_{i+1},
d_{i+1}\}f_i= f_i\{c_{i+1}, d_{i+1}\} = \theta$ and, for $1 \leq i
\leq n$, we  also have $l_ix=xl_i=\theta$ for $x \in S \backslash
\{l_i, f_{i-1}, f_i, c_{i-1}, d_{i-1}\}$. We claim that $S$ is \B. We prove this  by contradiction. So assume that
there exist distinct elements $x,y\in S$, elements $w_{1}, \ldots , w_{m}\in S^{1}$,  an ideal $I$ of
 $\langle x,y,w_{1},\ldots , w_{m}\rangle$ such that $$\lambda_{t}(x, y, w_{1},
w_{2}, \ldots, w_{t}) = \lambda_{m}(x, y, w_{1},
w_{2}, \ldots, w_{m}),$$ $$\rho_{t}(x, y, w_{1}, w_{2}, \ldots,
w_{t}) = \rho_{m}(x, y, w_{1}, w_{2}, \ldots,
w_{m}),$$ $\lambda_m \neq \rho_m$, $\lambda_m, \rho_m\not\in I$  and  $\langle \lambda_i, \rho_i \rangle$ is nilpotent in $\langle x,y,w_{1},\ldots , w_{m}\rangle/ I$
for some $0 \leq i \leq m$.
Because the subsemigroups $Y_k=\{\theta, a_k, b_k\}$ are ideals of $S$ for $0 \leq k \leq n$, if $\{\lambda_i, \rho_i, w_{i+1}\} \cap Y_k \neq \emptyset$ for some $0 \leq k \leq n$, then $\{\lambda_{i+1}, \rho_{i+1}\} \subseteq Y_k$. Since $\lambda_m \neq \rho_m$, $\{\lambda_{i+1}, \rho_{i+1}\}= \{a_k, b_k\}$ and thus $a_k, b_k\in S \backslash I$.  Also since $\lambda_{i+1}, \rho_{i+1} \in  \langle x,y,w_1, \ldots , w_m\rangle$, we have that  $a_k, b_k \in  \langle x,y,w_1, \ldots , w_m\rangle.$ Therefore there is an edge between $(a_k,b_k)$ in ${\mathcal N}_{\langle x,y,w_1, \ldots, w_m\rangle/I}$.

We claim that $a_k \not \in S(S\backslash \{a_k\})$. Indeed, suppose the contrary, i.e. assume $\alpha \beta= a_k$ with $\alpha \in S$ and
$\beta \in (S\backslash \{ a_{k} \})$. Then $\alpha \beta \beta= a_k \beta$. Since $S$ is band, $a_k=\alpha \beta=a_k\beta$. Now as $a_k (S\backslash \{a_k\})=\{\theta, b_k\}$ we get that $\beta=a_k$, a contradiction. This proves the claim.
Now as $a_k \in\{\lambda_{i}w_{i+1}\rho_{i},\rho_{i}w_{i+1}\lambda_{i}\}$ and $a_k \not \in S(S\backslash \{a_k\})$,
$a_k \in \{\lambda_{i}, \rho_{i}\}$.
Suppose that $\lambda_i = a_k$. Since there is no edge between $\lambda_i$ and $\rho_i$, $\rho_i \in S\backslash \{a_k, b_k, f_k, f_{k-1}, l_{k}\}$, because if $\rho_i \in \{b_k,f_{k-1}, f_{k}, l_{k}\}$ then
there are edges between $\lambda_i$ and $\rho_i$ in ${\mathcal N}_{\langle x,y,w_1, \ldots, w_m\rangle/I}$. Now as $a_kw_{k+1} \in Y_k$ and $Y_k (S\backslash \{a_k,$ $ b_k, f_k, f_{k-1}, l_{k}\})= \{\theta\}$, $\theta \in \{\lambda_{i+1}, \rho_{i+1}\}$, a contradiction. Hence $\{\lambda_i, \rho_i, w_{i+1}\} \subseteq \{f_0, \cdots, f_n, l_1, \cdots, l_n\}$.

Since $\{f_0, \cdots, f_n, l_1, \cdots, l_n\}l_k=l_k\{f_0, \cdots, f_n, l_1, \cdots, l_n\}=\{\theta, l_k\}$ for every $1 \leq k \leq n$, $\{\lambda_i, \rho_i, w_{i+1}\} \subseteq \{f_0, \cdots, f_n\}$. Also $\{f_0, \cdots, f_{k-1}, f_{k+1}, \cdots, f_n\}f_k=$ $f_k\{f_0, \cdots,$ $f_{k-1}, f_{k+1}, \cdots, f_n\}=\{\theta, l_k\}$ for every $1 \leq k \leq n$. Thus $\{\lambda_{i+1}, \rho_{i+1}\}= \{\theta, l_k\}$.
Then $S$ is \B.

As $S$ is a band, we
have that ${\mathcal N}_S = {\mathcal M}_S$. The graph ${\mathcal
N}_S$ is depicted in Figure~1. Between $b_0$ and $d_n$ the
shortest path has a length $4+2n$. Between the roots $\{a_i,
b_i\}$ and $\{a_{i+1}, b_{i+1}\}$, there is a connection, but
there is no connection between the roots $\{a_i, b_i\}$ and
$\{a_{i+2}, b_{i+2}\}$.

We introduce a class of \B\ semigroups for which the connectivity
between the stems is transitive.

\begin{defn} \label{strong-pseudo-def}
A \B\ semigroup $S$ is said to be strong \B\ if it satisfies the
following  properties.
\begin{enumerate}
    \item[(H1)] If ${\mathcal N}_S$ has a connection $S_{i}\backslash S_{i+1}$
between two stems $S^{(i_{1})}$ and $S^{(i_{2})}$, then the   \B\
homomorphisms $\Phi_{1}$ and $\Phi_{2}$ with domains
$(S/S_{i_{1}+1})^{0}$ and $(S/S_{i_{2}+1})^{0}$ respectively are
such that $S_{i}\backslash S_{i+1}$ intersects two different \B\
classes of $\Phi_{1}$ and it also intersects two different \B\
classes of $\Phi_{2}$.
 \item[(H2)] If $S_i / S_{i+1}$ and $S_j / S_{j+1}$
 are principal factors of $S$ with $i < j$ and there is an edge in ${\mathcal N}_S$
 between some of their elements ,
 then $S_i \backslash S_{i+1} \subset F_{S_j / S_{j+1}}(S /
 S_{j+1})$.
\end{enumerate}
\end{defn}

Note that property (H1) implies that each connection intersects
non-trivially the different \B\ classes of the \B\ homomorphism
determined by the stem in which it is contained. Hence,
Lemma~\ref{complete}.(\ref{complete4}) easily yields that  
if $S_i \backslash
S_{i+1}$ and $S_j \backslash S_{j+1}$ are different connections
that are in a same stem, then there exists $(s,t) \in {\mathcal
N}_S$ with  $s \in S_i \backslash S_{i+1}, t \in S_j \backslash
S_{j+1}$. Also if $i < j$, property (H2) implies $S_i \backslash
S_{i+1} \subset F_{S_j / S_{j+1}}(S / S_{j+1})$.

An example of a strong \B\ semigroup is the semigroup $R=\{
\theta, a_{1},a_{2},a_{3},a_{4},b_{1},b_{2},b_{3}\}$ with
multiplication table
$$
\begin{tabular}{ c|c c c c c c c c}
         & $\theta$ & $a_1$    & $a_2$    & $a_3$    & $a_4$    & $b_1$    & $b_2$    & $b_3$\\ \hline
$\theta$ & $\theta$ & $\theta$ & $\theta$ & $\theta$ & $\theta$ & $\theta$ & $\theta$ & $\theta$\\
  $a_1$  & $\theta$ & $a_1$    & $a_2$    & $\theta$ & $\theta$ & $a_1$    & $a_2$    & $a_2$\\
  $a_2$  & $\theta$ & $a_1$    & $a_2$    & $\theta$ & $\theta$ & $a_1$    & $a_2$    & $a_2$\\
  $a_3$  & $\theta$ & $\theta$ & $\theta$ & $a_3$    & $a_4$    & $a_3$    & $a_3$    & $a_4$\\
  $a_4$  & $\theta$ & $\theta$ & $\theta$ & $a_3$    & $a_4$    & $a_3$    & $a_3$    & $a_4$\\
  $b_1$  & $\theta$ & $a_1$    & $a_2$    & $a_3$    & $a_4$    & $b_1$    & $b_2$    & $b_3$\\
  $b_2$  & $\theta$ & $a_1$    & $a_2$    & $a_3$    & $a_4$    & $b_1$    & $b_2$    & $b_3$\\
  $b_3$  & $\theta$ & $a_1$    & $a_2$    & $a_3$    & $a_4$    & $b_1$    & $b_2$    & $b_3$\\
\end{tabular}$$
With a  similar proof to the one  given for  the example stated before Definition~\ref{strong-pseudo-def} one shows that  $R$ is \B.
Furthermore, $R$ has two roots $\{a_1, a_2\}$, $\{a_3, a_4\}$, with
respective stems say $R_{1}$ and $R_{2}$. The set $\{b_1,b_2,
b_3\}$ is a connection between $R_{1}$ and $R_{2}$. The sets
$\{b_1\}$ and  $\{b_2, b_3\}$ belong to different \B\ classes
determined by the root $\{a_1, a_2\}$ and  the sets $\{b_1, b_2\}$
and  $\{b_3\}$ belong to different \B\ classes determined by the
root $\{a_3, a_4\}$. Therefore $R$ is a strong \B\ semigroup.

\begin{lem}\label{kh}
Let $S$ be a strong \B\ finite semigroup. The following properties
hold.
\begin{enumerate}
    \item \label{kh1} If there is a  connection between the  stems $S^{(i_1)}$ and  $S^{(i_2)}$ and also between the
    stems $S^{(i_2)}$ and $S^{(i_3)}$, then there is a connection between the stems $S^{(i_1)}$ and
    $S^{(i_3)}$.
    \item \label{kh2} If there is no connection between two stems $S^{(i)}$ and $S^{(j)}$ then these stems belong to different connected
    components of ${\mathcal N}_{S}$.
\end{enumerate}
\end{lem}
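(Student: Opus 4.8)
The plan is to derive (\ref{kh2}) from (\ref{kh1}), so the substantive work is (\ref{kh1}). I first note that, since each stem $S^{(i)}$ is a union of principal factors $S_l\backslash S_{l+1}$ and these factors partition $S$, the statement ``there is a connection between $S^{(a)}$ and $S^{(b)}$'' is equivalent to $S^{(a)}\cap S^{(b)}\neq\emptyset$; thus (\ref{kh1}) reduces to showing $S^{(i_1)}\cap S^{(i_3)}\neq\emptyset$. Fix connections $C_1=S_p\backslash S_{p+1}\subseteq S^{(i_1)}\cap S^{(i_2)}$ and $C_2=S_q\backslash S_{q+1}\subseteq S^{(i_2)}\cap S^{(i_3)}$. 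If $C_1=C_2$ then $C_1\subseteq S^{(i_1)}\cap S^{(i_3)}$ and we are done, so assume $p\neq q$.

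The mechanism that drives the proof is the following. As $C_1,C_2$ are distinct connections inside the common stem $S^{(i_2)}$, property (H1) together with Lemma~\ref{complete}.(\ref{complete4}) (the consequence recorded after Definition~\ref{strong-pseudo-def}) produces an edge of ${\mathcal N}_S$ between some $s\in C_1$ and some $t\in C_2$. Taking without loss of generality $p<q$, property (H2) applied to this edge gives $C_1\subseteq F_{S_q/S_{q+1}}(S/S_{q+1})$. Suppose now that both $C_1$ and $C_2$ are roots. Then $S^{(q)}=F_{S_q/S_{q+1}}(S/S_{q+1})\cup(S_q\backslash S_{q+1})$, so $C_1\subseteq S^{(q)}$; and since the root $C_2$ lies in $S^{(i_3)}$, Theorem~\ref{principal series}.(\ref{ps2}) (applied with the piece $C_1\subseteq S^{(q)}$ and the root $S_q\backslash S_{q+1}\subseteq S^{(i_3)}$) yields $C_1\subseteq S^{(i_3)}$. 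Hence $C_1\subseteq S^{(i_1)}\cap S^{(i_3)}$ and (\ref{kh1}) holds. So everything comes down to being able to choose the two connecting pieces to be roots.

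This is exactly where strongness enters, and I expect it to be the main obstacle. The key claim is that a nonempty intersection $S^{(a)}\cap S^{(b)}$ of two distinct stems always contains a root. I would prove it by taking the piece $C$ of maximal index in $S^{(a)}\cap S^{(b)}$ and arguing that $C$ must be a root: otherwise Theorem~\ref{principal series}.(\ref{ps1}),(\ref{ps3}) provide a root $S_r\backslash S_{r+1}$ with $C\subseteq S^{(r)}$ and index exceeding that of $C$, and it would suffice to show this supporting root again lies in $S^{(a)}\cap S^{(b)}$, contradicting maximality. Property (H1) is what makes this possible: being a connection, $C$ meets two distinct \B\ classes of each of $\Phi_a$, $\Phi_b$ and $\Phi_r$, so Lemma~\ref{complete}.(\ref{complete2})--(\ref{complete4}) yields edges between the root $S_r\backslash S_{r+1}$ and the roots of $S^{(a)}$ and of $S^{(b)}$; (H2) then forces containments of the corresponding stems, and the homomorphism-compatibility in the second assertion of Theorem~\ref{principal series}.(\ref{ps2}) is what lets these containments compose so as to place $S_r\backslash S_{r+1}$ in both stems. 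Orchestrating this---i.e. controlling the index bookkeeping so that, under (H1) and (H2), the separate links through (\ref{ps2}) actually compose---is the crux. That strongness is indispensable is visible in the non-strong example $X_i$ preceding Definition~\ref{strong-pseudo-def}: there the connecting piece $\{f_i\}$ is a single idempotent meeting only one \B\ class, (H1) fails, each low shared piece is linked to the two roots separately without the links composing, and transitivity breaks.

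Granting (\ref{kh1}), statement (\ref{kh2}) follows by contraposition. Suppose $S^{(i)}$ and $S^{(j)}$ lie in the same connected component of ${\mathcal N}_S$. Since each stem is connected (Corollary~\ref{connectivity}.(\ref{co1})), there is a path $v_0,\dots,v_\ell$ in ${\mathcal N}_S$ with $v_0\in S^{(i)}$ and $v_\ell\in S^{(j)}$. By Corollary~\ref{edge stem} every edge $v_tv_{t+1}$ lies in some common stem $S^{(k_t)}$, and consecutive stems in the list $S^{(i)},S^{(k_0)},\dots,S^{(k_{\ell-1})},S^{(j)}$ share a vertex, hence either coincide or admit a connection. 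Applying (\ref{kh1}) repeatedly along this finite chain collapses it to a single connection between $S^{(i)}$ and $S^{(j)}$, so $S^{(i)}\cap S^{(j)}\neq\emptyset$, contrary to the hypothesis that there is no connection between them. This proves (\ref{kh2}).
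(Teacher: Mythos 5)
Your mechanism for part (\ref{kh1}) --- use (H1) together with Lemma~\ref{complete}.(\ref{complete4}) to produce an edge between the two connections $C_1=S_p\backslash S_{p+1}$ and $C_2=S_q\backslash S_{q+1}$ inside the common stem $S^{(i_2)}$, then (H2) to get $C_1\subseteq F_{S_q/S_{q+1}}(S/S_{q+1})$, hence $C_1\subseteq S^{(q)}$, and finally Theorem~\ref{principal series}.(\ref{ps2}) to push $C_1$ into $S^{(i_3)}$ --- is exactly the paper's proof. The gap is that you only carry this out under the extra hypothesis that $C_1$ and $C_2$ are roots, and your proposed reduction to that case fails. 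The ``key claim'' that a nonempty intersection of two distinct stems always contains a root is false: a root $S_p\backslash S_{p+1}$ can never lie in any stem other than its own, because $S_p\backslash S_{p+1}\subseteq F_{S_{i'}/S_{i'+1}}(S/S_{i'+1})$ for a root forces $S_{i'}/S_{i'+1}$ to be nilpotent (the remark following the definition of root), whereas a stem is built on a non-nilpotent root. Concretely, in the paper's example $R$ the two stems intersect exactly in the connection $\{b_1,b_2,b_3\}$, which is not a root (there is an edge between $b_1$ and $a_2$). So the case you actually prove is vacuous, and the case that genuinely occurs is never handled.

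The irony is that the hypothesis you labour to arrange is not needed. The set $S^{(q)}=F_{S_q/S_{q+1}}(S/S_{q+1})\cup(S_q\backslash S_{q+1})$ is defined for every index $q$, not only for roots, and the containment statement of Theorem~\ref{principal series}.(\ref{ps2}) makes no rootness assumption; applied with $C_1\subseteq S^{(q)}$ and $S_q\backslash S_{q+1}=C_2\subseteq S^{(i_3)}$ it gives $C_1\subseteq S^{(i_3)}$ at once, so $C_1$ is the desired connection between $S^{(i_1)}$ and $S^{(i_3)}$. Deleting the sentence ``Suppose now that both $C_1$ and $C_2$ are roots'' and everything depending on it turns your argument into the paper's. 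Your derivation of (\ref{kh2}) from (\ref{kh1}), via Corollary~\ref{edge stem} and a chain of pairwise-intersecting stems along a path, is sound and agrees with the paper.
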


\begin{proof}
(1) Suppose $S_{k_1} \backslash S_{k_1+1}$ is a connection between
the stems $S^{(i_1)}$ and $S^{(i_2)}$ and $S_{k_2} \backslash
S_{k_2+1}$ is a connection between the stems $S^{(i_2)}$ and
$S^{(i_3)}$. Suppose that $k_1 < k_2$. Since $S$ is a strong \B\
semigroup, $S_{k_1} \backslash S_{k_1+1} \subseteq F_{S_{k_2} /
S_{k_2+1}}(S/S_{k_2+1})$ and thus $S_{k_1} \backslash S_{k_1+1}
\subseteq S^{(k_2)}$. Since $S_{k_2} \backslash S_{k_2+1}$ $
\subseteq S^{(i_3)}$, we obtain from Theorem~\ref{principal
series}.(\ref{ps2}) that  $S_{k_1} \backslash S_{k_1+1} \subseteq
S^{(i_3)}$. Therefore $S_{k_1} \backslash S_{k_1+1}$ is a
connection between the stems $S^{(i_1)}$ and $S^{(i_3)}$.

(2) Let $S^{(i)}$ and $S^{(j)}$ be two different stems. Suppose
that $x \in S^{(i)}$,  $y \in S^{(j)}$ and that $x=z_{0}, z_1,
\ldots, z_n, z_{n+1}=y$ is a path between $x$ and $y$. Because of
Theorem~\ref{principal series}.(\ref{ps4}), for $0\leq i \leq n$,
we get that there exist stems $S^{(n_{z_i})}$ and subsets
$S_{m_{z_i}} \backslash S_{m_{z_i}+1}$ of $S^{(n_{z_{i}})}$  such
that $z_i \in S_{m_{z_i}} \backslash S_{m_{z_i}+1}$ for a
principal factor $S_{m_{z_i}} / S_{m_{z_i}+1}$.

Suppose that $m_{z_i} < m_{z_{i+1}}$. Since $S$ is a strong \B\
semigroup and there is an edge in ${\mathcal N}_S$ between $z_i$
and $z_{i+1}$, we have $S_{m_{z_i}} \backslash S_{m_{z_i}+1}
\subseteq F_{S_{m_{z_{i+1}}} /
S_{m_{z_{i+1}}+1}}(S/S_{m_{z_{i+1}}+1})$. Hence $S_{m_{z_i}}
\backslash S_{m_{z_i}+1}$ is a connection between the stems
$S^{(n_{z_i})}$ and $S^{(n_{z_i}+1)}$. Similarly we have this
result for $m_{z_i} > m_{z_{i+1}}$.  Therefore by part (1), there
is a connection between $S^{(i)}$ and $S^{(j)}$. This contradicts
with there is no connection between them.
\end{proof}

Note that if $S$ is a \B\ finite semigroup that is not {strong}
\B\ then in general Lemma~\ref{kh} does not hold. For example, the
\B\ semigroup with graph as depicted in Figure~1, does not satisfy
property (H1) and it satisies neither (1) nor (2) of
Lemma~\ref{kh}. An example of
a \B\ finite semigroup that does not satisfy property
(H2) and it satisfies neither (1) nor (2) of
Lemma~\ref{kh}, is the semigroup $T =\{ \theta,
a_{1},a_{2},a_{3},b_{1},b_{2},b_{3},f_{1},f_{2},$ $f_{3},f_{4}\}$
with multiplication table
$$
\begin{tabular}{ c|c c c c c c c c c c c}
         & $\theta$ & $a_1$    & $b_1$    & $a_2$    & $b_2$    & $a_3$    & $b_3$    & $f_1$    & $f_2$    & $f_3$    & $f_4$   \\ \hline
$\theta$ & $\theta$ & $\theta$ & $\theta$ & $\theta$ & $\theta$ & $\theta$ & $\theta$ & $\theta$ & $\theta$ & $\theta$ & $\theta$\\
  $a_1$  & $\theta$ & $a_1$    & $b_1$    & $\theta$ & $\theta$ & $\theta$ & $\theta$ & $b_1$    & $a_1$    & $\theta$ & $\theta$\\
  $b_1$  & $\theta$ & $a_1$    & $b_1$    & $\theta$ & $\theta$ & $\theta$ & $\theta$ & $b_1$    & $a_1$    & $\theta$ & $\theta$\\
  $a_2$  & $\theta$ & $\theta$ & $\theta$ & $a_2$    & $b_2$    & $\theta$ & $\theta$ & $b_2$    & $a_2$    & $b_2$    & $a_2$   \\
  $b_2$  & $\theta$ & $\theta$ & $\theta$ & $a_2$    & $b_2$    & $\theta$ & $\theta$ & $b_2$    & $a_2$    & $b_2$    & $a_2$   \\
  $a_3$  & $\theta$ & $\theta$ & $\theta$ & $\theta$ & $\theta$ & $a_3$    & $b_3$    & $\theta$ & $\theta$ & $b_3$    & $a_3$   \\
  $b_3$  & $\theta$ & $\theta$ & $\theta$ & $\theta$ & $\theta$ & $a_3$    & $b_3$    & $\theta$ & $\theta$ & $b_3$    & $a_3$   \\
  $f_1$  & $\theta$ & $a_1$    & $b_1$    & $a_2$    & $b_2$    & $\theta$ & $\theta$ & $f_1$    & $f_2$    & $b_2$    & $a_2$   \\
  $f_2$  & $\theta$ & $a_1$    & $b_1$    & $a_2$    & $b_2$    & $\theta$ & $\theta$ & $f_1$    & $f_2$    & $b_2$    & $a_2$   \\
  $f_3$  & $\theta$ & $\theta$ & $\theta$ & $a_2$    & $b_2$    & $a_3$    & $b_3$    & $b_2$    & $a_2$    & $f_3$    & $f_4$   \\
  $f_4$  & $\theta$ & $\theta$ & $\theta$ & $a_2$    & $b_2$    & $a_3$    & $b_3$    & $b_2$    & $a_2$    & $f_3$    & $f_4$   \\
\end{tabular}$$
We leave it to the reader to verify that  $T$ is \B. Further, the semigroup $T$ has three roots $\{a_1, b_1\}$, $\{a_2, b_2\}$,
$\{a_3, b_3\}$, with respective stems say $T_{1}$, $T_{2}$ and
$T_{3}$. The set $\{f_1,f_2\}$ is a connection between $T_{1}$ and
$T_{2}$ and the set $\{f_3,f_4\}$ is a connection between $T_{2}$
and $T_{3}$, but there is no connection between $T_{1}$ and
$T_{3}$. As $$\{f_3,f_4\} \subset F'_{\{a_1, b_1, a_2, b_2, a_3,
b_3, f_1,f_2, \theta\} / \{a_1, b_1, a_2, b_2, a_3, b_3,
\theta\}}(T / \{a_1, b_1, a_2, b_2, a_3, b_3, \theta\}),$$ $T$ is
not a strong \B\ semigroup.

\begin{cor} \label{connected component kh}
Let $S$ be a strong \B\ finite semigroup with $K$ the ideal of
isolated vertices. Two elements $x$ and $y$ of $S\backslash K$ are
in the same connected component of ${\mathcal N}_{S}$ if and only if
there exists an element $z \in S$ such that $xz\not\in K$ and
$zy\not\in K$.
\end{cor}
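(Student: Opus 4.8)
The plan is to reduce the whole statement to a single translation between products and stems: for any $a,b\in S$ one has $ab\notin K$ if and only if some stem $S^{(p)}$ contains both $a$ and $b$. First I would establish this auxiliary fact. If $ab\notin K$, then since $K$ is an ideal (Theorem~\ref{principal series}.(\ref{ps1})) neither $a$ nor $b$ lies in $K$, so by Theorem~\ref{principal series}.(\ref{ps4}) each lies in a principal factor subset, say $a\in A=S_p\backslash S_{p+1}$ and $b\in B=S_q\backslash S_{q+1}$. As $ab\in AB$ and $ab\notin K$, Theorem~\ref{principal series}.(\ref{ps6}) forces $A$ and $B$ to lie in a common stem, whence $a,b\in S^{(p)}$. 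Conversely, if $a,b\in S^{(p)}$, then since a stem is a subsemigroup (Theorem~\ref{principal series}.(\ref{ps5})) contained in $S\backslash K$ (Theorem~\ref{principal series}.(\ref{ps4})), the product $ab$ again lies in $S^{(p)}$ and so $ab\notin K$.

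For the reverse implication, suppose some $z\in S$ satisfies $xz\notin K$ and $zy\notin K$. The translation produces a stem $S^{(p)}$ with $x,z\in S^{(p)}$ and a stem $S^{(q)}$ with $z,y\in S^{(q)}$. Since every stem is connected (Corollary~\ref{connectivity}.(\ref{co1})), $x$ and $z$ lie in a single connected component of $\mathcal{N}_S$, and likewise $z$ and $y$; as these two components share the vertex $z$, they coincide, and hence $x$ and $y$ lie in the same connected component. Note that this implication uses no hypothesis beyond \B.

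For the forward implication, assume $x,y$ lie in the same connected component and choose stems $S^{(i)}\ni x$ and $S^{(j)}\ni y$ (possible by Theorem~\ref{principal series}.(\ref{ps4})). If $S^{(i)}=S^{(j)}$, take $z=x$: then $xz=x^2$ and $zy=xy$ both lie in this stem and hence avoid $K$ by the translation. If $S^{(i)}\neq S^{(j)}$, observe that each stem, being connected (Corollary~\ref{connectivity}.(\ref{co1})), is contained in the component of $x$ (respectively $y$); since these components coincide, $S^{(i)}$ and $S^{(j)}$ lie in the same component, so the contrapositive of Lemma~\ref{kh}.(\ref{kh2}) yields a connection $C=S_k\backslash S_{k+1}$ between them. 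By Definition~\ref{def-connection}, $C\subseteq S^{(i)}\cap S^{(j)}$, so any $z\in C$ satisfies $x,z\in S^{(i)}$ and $z,y\in S^{(j)}$; the translation then gives $xz\notin K$ and $zy\notin K$.

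I expect the only genuine subtleties to be the bookkeeping in the translation lemma---keeping straight that a product avoids $K$ precisely when the two factor subsets share a stem---and the correct single invocation of the strong \B\ hypothesis. The latter enters exactly once, in the forward implication, through Lemma~\ref{kh}.(\ref{kh2}): it is the transitivity of connections guaranteed there (which fails for general \B\ semigroups, as the example of Figure~1 shows) that lets me find \emph{one} element $z$ lying in a common stem with both $x$ and $y$, rather than merely a chain of connections that would force a multi-step path and no single bridging $z$.
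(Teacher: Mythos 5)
Your proof is correct and follows essentially the same route as the paper's: both arguments rest on the same ingredients, namely the equivalence (via Theorem~\ref{principal series}.(\ref{ps5}) and (\ref{ps6})) between a product avoiding $K$ and the two factors sharing a stem, the connectedness of stems from Corollary~\ref{connectivity}, and Lemma~\ref{kh}.(\ref{kh2}) (used in contrapositive) to extract a connection between the stems of $x$ and $y$. The only differences are organizational — you isolate the product--stem translation as an explicit auxiliary fact and argue the two implications directly, where the paper verifies the biconditional by a three-way case analysis — and your observation that the reverse implication needs only the \B\ hypothesis is accurate.
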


\begin{proof}
Suppose $x$ and $y$ are in $S \backslash K$. Assume $x$ and $y$
are in a same stem, say $S^{(h)}$. Because of
Theorem~\ref{principal series}.(\ref{ps5}), $S^{(h)}$ is a
subsemigroup of $S$ and because of
Corollary~\ref{connectivity}.(\ref{co1}), $S^{(h)}$ is connected.
Hence, by Theorem~\ref{principal series}.(\ref{ps5}), $xy, y^2
\notin K$ and $x$ and $y$ are in the same connected component.

Now assume that $x$ and $y$ are not in a same stem. Because of
Theorem~\ref{principal series}.(\ref{ps4}), there exist stems
$S^{(i)}, S^{(j)}$ such that $x \in S^{(i)}$ and $y \in S^{(j)}$.
We need to deal with two cases:  (i) between $S^{(i)}$ and
$S^{(j)}$, there is a connection, say $S_k \backslash S_{k+1}$,
(ii) between $S^{(i)}$ and $S^{(j)}$ there is no connection.

(i) Let $z\in S_k \backslash S_{k+1}\subseteq S^{(i)}\cap S^{(j)}$.
Since $S^{(i)}$ and $S^{(j)}$ are semigroups we get that $xz\in
S^{(i)}$ and $zy\in S^{(j)}$ and thus $xz\not\in K$ and $zy\not\in
K$. Corollary~\ref{connectivity}.(\ref{co4}) implies that $x$ and
$y$ are in the same connected component.

(ii) Since, by
assumption, there is no connection between $S^{(i)}$ and
$S^{(j)}$, Lemma~\ref{kh}.(\ref{kh2}) yields that  $x$ and $y$ are
in different connected components. Now let $z\in S$ and assume
$xz, zy \not\in K$. Then, by Theorem~\ref{principal
series}.(\ref{ps6}), there exists stems $S^{(k_1)}$ and
$S^{(k_2)}$ such that $x,z \in S^{(k_1)}$ and $z,y \in S^{(k_2)}$.
Corollary~\ref{connectivity}.(\ref{co4}) implies that $x$ and $y$
are in the same connected component, a contradiction.

The result follows.
\end{proof}

\begin{defn}
Let $S$  be a \B\ finite semigroup and let $K$ be its largest
nilpotent ideal. For $a\in S$ put $K_{a}=\{ s\in S \mid as\not\in
K\}$. Because of  Theorem~\ref{principal series}.(\ref{ps6}),
$K_{a}=\{ s\in S \mid sa\not\in K\}$.
\end{defn}

\begin{cor}
Let $S$ be a strong \B\ finite semigroup. The connected components
with more than one element are the maximal elements in the set $\{
K_{a} \mid a\in S\backslash K\}$.
\end{cor}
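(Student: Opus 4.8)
The plan is to describe each set $K_a$ explicitly in terms of the stems of $S$, and then, inside every non-trivial connected component, to exhibit a single connection all of whose elements lie simultaneously in every stem of that component.

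First I would prove that, for $a\in S\backslash K$, the set $K_a$ is precisely the union of all stems containing $a$. Write $a\in S_i\backslash S_{i+1}$ and let $s\in S_j\backslash S_{j+1}$. If $as\notin K$ then $(S_i\backslash S_{i+1})(S_j\backslash S_{j+1})\nsubseteq K$, so by Theorem~\ref{principal series}.(\ref{ps6}) the subsets $S_i\backslash S_{i+1}$ and $S_j\backslash S_{j+1}$ lie in a common stem $S^{(h)}$, whence $a,s\in S^{(h)}$. Conversely, if $a$ and $s$ lie in a common stem $S^{(h)}$, then $as\in S^{(h)}$ by Theorem~\ref{principal series}.(\ref{ps5}), and since $S^{(h)}\subseteq S\backslash K$ by Theorem~\ref{principal series}.(\ref{ps4}) we get $as\notin K$, i.e.\ $s\in K_a$. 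Hence $K_a=\bigcup\{S^{(h)}\mid a\in S^{(h)}\}$. As each stem is connected (Corollary~\ref{connectivity}.(\ref{co1})) and every stem containing $a$ lies in the connected component $C_a$ of $a$ in ${\mathcal N}_S$, we obtain $K_a\subseteq C_a$; moreover $a\in K_a$, so each member of $\{K_a\mid a\in S\backslash K\}$ is non-empty.

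The crucial step is to show that every connected component $C$ with $\abs{C}>1$ equals $K_a$ for a suitable $a$; by the previous paragraph this amounts to finding an element lying in all stems of $C$. Recall that such a $C$ is a union of stems and that, by Lemma~\ref{kh}.(\ref{kh2}), any two of its stems admit a connection. If $C$ is a single stem the claim is immediate. Otherwise choose a connection $S_k\backslash S_{k+1}\subseteq C$ of smallest index $k$, connecting stems $S^{(p)}$ and $S^{(q)}$, and let $S^{(r)}$ be an arbitrary stem of $C$. If $S^{(r)}\in\{S^{(p)},S^{(q)}\}$ then $S_k\backslash S_{k+1}\subseteq S^{(r)}$ trivially; otherwise pick a connection $S_l\backslash S_{l+1}$ between $S^{(q)}$ and $S^{(r)}$, so $k\le l$. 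If $k=l$ this connection is $S_k\backslash S_{k+1}$, giving $S_k\backslash S_{k+1}\subseteq S^{(r)}$. If $k<l$, then $S_k\backslash S_{k+1}$ and $S_l\backslash S_{l+1}$ are distinct connections lying in the common stem $S^{(q)}$, so by (H1) (together with Lemma~\ref{complete}.(\ref{complete4})) there is an edge in ${\mathcal N}_S$ between some of their elements; (H2) then yields $S_k\backslash S_{k+1}\subseteq F_{S_l/S_{l+1}}(S/S_{l+1})\subseteq S^{(l)}$, and as $S_l\backslash S_{l+1}\subseteq S^{(r)}$, Theorem~\ref{principal series}.(\ref{ps2}) gives $S_k\backslash S_{k+1}\subseteq S^{(r)}$. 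Thus $S_k\backslash S_{k+1}$ is contained in every stem of $C$; for $a\in S_k\backslash S_{k+1}$ the stems containing $a$ are exactly the stems of $C$, so $K_a=C$.

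It remains to assemble the characterization. Every $K_b$ lies in the connected component $C_b$ of $b$, and $C_b$ has more than one element because $b\in S\backslash K$ is a non-isolated vertex by Theorem~\ref{principal series}.(\ref{ps1}). If $K_a$ is maximal, the middle step applied to $C_a$ produces $a'$ with $K_{a'}=C_a\supseteq K_a$, so maximality forces $K_a=C_a$, a connected component. Conversely, if $C$ is a component with $\abs{C}>1$ and $K_a=C$, then any $K_b\supseteq K_a=C$ satisfies $C\subseteq K_b\subseteq C_b$, forcing $C_b=C$ and hence $K_b\subseteq C=K_a$, so $K_b=K_a$; thus $K_a=C$ is maximal. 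The maximal elements of $\{K_a\mid a\in S\backslash K\}$ are therefore exactly the connected components of ${\mathcal N}_S$ with more than one element. The decisive obstacle is the middle step: a connected component is a priori only an amalgam of pairwise-connected stems, and one must pin it down by a single common element; this is precisely where the strong \B\ hypotheses (H1) and (H2), together with the transitivity of connections, are indispensable, since for a merely \B\ semigroup (for instance the one in Figure~1) no such common element need exist.
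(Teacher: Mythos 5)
Your proof is correct, but it takes a genuinely different route from the paper's. The paper first checks that $K_x$ is connected, then takes a maximal $K_x$, picks $z$ in the connected component of $x$, invokes Corollary~\ref{connected component kh} to produce an intermediate element $v$ with $xv,vz\not\in K$, and runs a four-case analysis on the $F$-containments among the three principal factors involved; the last case only closes because maximality forces $K_x=K_v$. You instead prove two stronger structural facts: that $K_a$ is exactly the union of the stems through $a$ (via Theorem~\ref{principal series}.(\ref{ps5}),(\ref{ps6}) and (\ref{ps4})), and that every non-trivial connected component contains a minimal-index connection lying in \emph{all} of its stems (via Lemma~\ref{kh}.(\ref{kh2}), the remark after Definition~\ref{strong-pseudo-def} combining (H1) with Lemma~\ref{complete}.(\ref{complete4}), then (H2) and Theorem~\ref{principal series}.(\ref{ps2})), so that the component itself is of the form $K_a$. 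The maximality statement then follows by pure component combinatorics, with no case analysis and no appeal to Corollary~\ref{connected component kh}. Your approach buys more: an explicit description of every set $K_a$, and the fact that each non-trivial component is actually realized as some $K_a$ (the paper only extracts this implicitly for the maximal ones); the paper's argument is more local and stays closer to the multiplication. Both arguments use the strong \B\ hypotheses in essentially the same places, and I see no gap in yours.
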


\begin{proof}
By Theorem~\ref{principal series}, if a connected component of
$a\in S$ has more than one element then $a\in S\backslash K$.
Furthermore, $a\in K_{a}$ for any $a\in S\backslash K$. Let $x\in
S\backslash K$ be such that $K_{x}$ is a maximal element in the
set $\{ K_{a} \mid a\in S\backslash K\}$.  We need to prove that
$K_{x}$ is a connected component.

First we notice that $K_{x}$ is connected. Indeed, if $y \in K_x$,
then $xy \notin K$. Hence, again by Theorem~\ref{principal
series}, $x$ and $y$ are in a same stem. Since every stem is
connected by Corollary~\ref{connectivity}, there is a path between
$x$ and $y$. Hence all the element of $K_x$ are in the same connected component.

Second, suppose that $z\in S$ is in the connected component
containing $x$. It remains to be shown that $z\in K_{x}$. To do
so, we first notice from Corollary~\ref{connected component kh}
that  there exists $v\in S$ such that $xv\not\in K$ and $vz\not\in
K$. Because of Theorem~\ref{principal series}.(\ref{ps6}), there
exist stems $S^{(i_1)}$ and $S^{(i_2)}$ such that $x,v \in S^{(i_1)}$,
$z,v \in S^{(i_2)}$. Assume $x\in S_{n_x}\backslash S_{n_x+1}$,
$z\in S_{n_z}\backslash S_{n_z+1}$ and $v\in S_{n_v}\backslash
S_{n_v+1}$, where  $T_{1}=S_{n_x}/ S_{n_x+1}$, $T_{2}=S_{n_z}/
S_{n_z+1}$ and $T_{3}=S_{n_v}/ S_{n_v+1}$ are principal factors of
$S$. 
As $S$ is strong \B\ and $T_3$ is a connection between the stems
$S^{(i_1)}$ and $S^{(i_2)}$,  $T_3$
intersects non-trivially different \B\ classes of the \B\
homomorphisms determined by these stems. Hence by
Lemma~\ref{complete}.(\ref{complete4}), there is an edge between
some non-zero elements of $T_1$ and $T_3$ and there also is an
edge between some non-zero elements  of $T_2$ and $T_3$ in
${\mathcal N}_S$. Consequently, either $(T_{1}\backslash \{ \theta \})
\subseteq F_{T_3}(S/S_{n_v+1})$ or $(T_3 \backslash \{ \theta \})
\subseteq F_{T_1}(S/S_{n_x+1})$, and either $(T_2 \backslash \{ \theta \})
\subseteq F_{T_3}(S/S_{n_v+1})$ or $(T_3 \backslash \{ \theta \})
\subseteq F_{T_2}(S/S_{n_z+1})$. We therefore need to deal with
four cases.

(Case 1) $(T_1 \backslash \{ \theta \})  \subseteq
F_{T_3}(S/S_{n_v+1})$ and $(T_3 \backslash \{ \theta \}) \subseteq
F_{T_2}(S/S_{n_z+1})$.\\
By Theorem~\ref{principal series}.(\ref{ps2}), we get that $(T_1
\backslash \{\theta \})\subseteq F_{T_2}(S/S_{n_z+1})$. Hence,
$(T_1 \backslash \{ \theta \})$ $( T_2 \backslash \{ \theta \})
\subseteq S^{(i_2)}$.  So, by
Theorem~\ref{principal series}.(\ref{ps6}), $T_{1}\backslash \{
\theta \}$ and $T_{2}\backslash \{ \theta \}$ are in a same stem.
Consequently, by Theorem~\ref{principal series}.(\ref{ps5}),
$xz\not\in K$ and thus $z\in K_{x}$.

(Case 2) $(T_1 \backslash \{ \theta \}) \subseteq
F_{T_3}(S/S_{n_v+1})$ and $(T_2 \backslash \{ \theta \}) \subseteq
F_{T_3}(S/S_{n_v+1})$.\\
In this case, by Theorem~\ref{principal series}.(\ref{ps2}),
$x,z\in (T_{1}\backslash \{ \theta \})\cup (T_{2}\backslash \{
\theta \}) \subseteq S^{(i_2)}$. Hence, by Theorem~\ref{principal
series}.(\ref{ps5}), $xz\not\in K$. So again $z\in K_{x}$.

(Case 3) $(T_3\backslash \{ \theta \}) \subseteq
F_{T_1}(S/S_{n_x+1})$ and $ (T_2 \backslash \{ \theta \})
\subseteq
F_{T_3}(S/S_{n_v+1})$.\\
As in (Case 1) one obtains that $z \in K_x$.

(Case 4) $(T_3\backslash \{ \theta\}) \subseteq
F_{T_1}(S/S_{n_x+1})$ and $( T_3\backslash \{ \theta \}) \subseteq
F_{T_2}(S/S_{n_z+1})$.\\
Clearly $x,z \in K_v$. Assume $y\in K_x$ (and thus, in particular,
$y\not\in K$) and $y\in S_{n_y}\backslash S_{n_y+1}$ for some
principal factor $T_{4}=S_{n_y}/ S_{n_y+1}$.

We claim that $y\in K_{v}$. Let $S^{(i_3)}$ be a stem such that $x,y \in S^{(i_3)}$. If $S^{(i_3)}=S^{(i_1)}$, then by Theorem~\ref{principal series}.(\ref{ps5}), $yv \not \in K$ and thus $y \in K_v$. Suppose that $S^{(i_3)}\neq S^{(i_1)}$. Then $T_1$ is a connection
between $S^{(i_1)}$ and $S^{(i_3)}$. Since $S$ is strong \B,  we know that
 either $(T_4 \backslash
\{ \theta \} ) \subseteq F_{T_1}(S/ S_{n_x+1})$ or $(T_1
\backslash \{ \theta \} ) \subseteq F_{T_4}(S/ S_{n_y+1})$. If
$(T_4 \backslash \{ \theta \}) \subseteq F_{T_1}(S/ S_{n_x+1})$
then since $(T_3 \backslash \{ \theta \}) \subseteq F_{T_1}(S/ S_{n_x+1})$, as in Case 2 we obtain that $vy \not \in K$ and thus $y \in K_v$. If $(T_1
\backslash \{ \theta \}) \subseteq F_{T_4}(S/ S_{n_y+1})$ then,
since $(T_3 \backslash \{\theta \} ) \subseteq
F_{T_1}(S/S_{n_x+1})$, as in Case 1 we obtain that $vy \not \in K$ and thus $y \in K_v$.
This finishes the proof of
the claim.

So we have proved that  $K_x\subseteq K_v$. As, by assumption,
$K_x$ is a maximal element in the set $\{ K_{a} \mid a\in
S\backslash K\}$, we get that $K_x = K_v$. Since $z\in K_{v}$ we
thus obtain that $z \in K_x$, as desired.
\end{proof}

{\bf Acknowledgement} The authors would like to thank the referee for an exhaustive and detailed report that resulted in an improved paper.


\end{document}